\def\thm@space@setup{%
  \thm@preskip=\parskip \thm@postskip=0pt
}
\renewcommand{\Re}{\operatorname{Re}}
\renewcommand{\Im}{\operatorname{Im}}
\titleformat{\section}[block]{\color{black}\large\bfseries\filcenter}{\thesection.}{0.5em}{}
\titleformat{\subsection}[hang]{\bfseries}{}{0.5em}{}
\numberwithin{equation}{section}
\newtheorem{theorem}{Theorem}[section]
\newtheorem{lemma}[theorem]{Lemma}
\newtheorem{proposition}[theorem]{Proposition}
\newtheorem{corollary}[theorem]{Corollary}
\newtheorem{definition}[theorem]{Definition}
\newtheorem{problem}[theorem]{Problem}
\newtheorem{remark}[theorem]{Remark}
\newtheorem{ltheorem}{Theorem}
\newtheorem{lproblem}{Problem}
\newtheorem{Model}{Theorem}
\newtheorem{model}[Model]{Model}
\titleformat{\subsection}[runin]{\bfseries}{}{}{}[.]
\titleformat{\subsubsection}[runin]{\bfseries}{}{}{}[.]
\newcounter{introequation}
\newenvironment{nsecequation}{\refstepcounter{introequation}\equation}{\tag{\theintroequation}\endequation}
\renewenvironment{proof}[1][\proofname]{%
   \par\pushQED{\qed}\normalfont%
   \topsep6\p@\@plus6\p@\relax
   \trivlist\item[\hskip\labelsep\bfseries#1\@addpunct{.}]%
   \ignorespaces
}{%
   \popQED\endtrivlist\@endpefalse
}
\newcommand{\CC}{\mathbb{C}}
\newcommand{\EE}{\mathbb{E}}
\newcommand{\FF}{\mathbb{F}}
\newcommand{\HH}{\mathbb{H}}
\newcommand{\KK}{\mathbb{K}}
\newcommand{\QQ}{\mathbb{Q}}
\newcommand{\RR}{\mathbb{R}}
\newcommand{\TT}{\mathbb{T}}
\newcommand{\XX}{\mathbb{X}}
\newcommand{\ZZ}{\mathbb{Z}}
\newcommand{\B}{\mathcal{B}}
\newcommand{\D}{\mathcal{D}}
\newcommand{\F}{\mathcal{F}}
\def\H{\mathcal{H}}
\newcommand{\VN}{\mathcal{L}}
\newcommand{\M}{\mathcal{M}}
\newcommand{\N}{\mathcal{N}}
\def\O{\mathcal{O}}
\newcommand{\U}{\mathcal{U}}
\newcommand{\X}{\mathcal{X}}
\newcommand{\Ee}{\mathscr{E}}
\newcommand{\sD}{\mathrm{D}}
\newcommand{\sG}{\mathrm{G}}
\newcommand{\sH}{\mathrm{H}}
\newcommand{\sO}{\mathrm{O}}
\newcommand{\sX}{\mathrm{X}}
\newcommand{\into}{\hookrightarrow}
\newcommand{\onto}{\twoheadrightarrow}
\newcommand{\longto}{\longrightarrow}
\newcommand{\equivalent}{\Longleftrightarrow}
\DeclareMathOperator*\wstspan{\overline{\mathrm{span}^{\wast}}}
\def\XXint#1#2#3{{\setbox0=\hbox{$#1{#2#3}{\int}$ }
\vcenter{\hbox{$#2#3$ }}\kern-.6\wd0}}
\def\1{\mathbf{1}}
\def\Id{\mathrm{id}}
\def\M{\mathcal{M}}
\def\N{\mathcal{N}}
\def\Zent{\mathcal{Z}}
\def\Homeo{\mathrm{Homeo}}
\newsavebox{\@brx}
\newcommand{\llangle}[1][]{\savebox{\@brx}{\(\m@th{#1\langle}\)}%
  \mathopen{\copy\@brx\mkern2mu\kern-0.9\wd\@brx\usebox{\@brx}}}
\newcommand{\rrangle}[1][]{\savebox{\@brx}{\(\m@th{#1\rangle}\)}%
  \mathclose{\copy\@brx\mkern2mu\kern-0.9\wd\@brx\usebox{\@brx}}}
\newcommand{\Vrt}{\mathrm{Vert}}
\newcommand{\Edg}{\mathrm{Edge}}
\newcommand{\ori}{\mathrm{o}}
\newcommand{\tar}{\mathrm{t}}
\def\BMO{\mathrm{BMO}}
\def\sgn{\mathrm{sgn}}
\newcommand{\vertiii}[1]{
 {\left\vert\kern-0.25ex\left\vert\kern-0.25ex\left\vert #1 
  \right\vert\kern-0.25ex\right\vert\kern-0.25ex\right\vert}
}
\def\B{\mathcal{B}}
\def\wast{{{\mathrm{w}}^\ast}}
\def\op{\mathrm{op}}
\def\cb{\mathrm{cb}}
\def\Aut{\mathrm{Aut}}
\def\Iso{\mathrm{Iso}}
\def\SL{\mathrm{SL}}
\def\PSL{\mathrm{PSL}}
\def\PSO{\mathrm{PSO}}
\def\PSU{\mathrm{PSU}}
\def\st{\mathrm{St}}
\def\Aff{\mathrm{Aff}}
\def\acts{\curvearrowright}
\def\BS{\mathrm{BS}}
\DeclareMathOperator*\freePr{\scalebox{1.5}{\raisebox{-0.2ex}{$\ast$}}}
\title{
  Noncommutative Cotlar identities\\
  for groups acting on tree-like structures
}
\author{
  Adri\'an M. Gonz\'alez-P\'erez\thanks{
    Partially supported by the ANR Grant \texttt{ANR-18-CE04-0021} and the \textit{Ramón y Cajal} grant \texttt{RYC2022-037045-I} (Ministerio de Ciencia, Spain).
    Research funded by the Madrid Government program \texttt{V PRICIT},
    grant number \texttt{SI1/PJI/2019-00514}.
  }, \
  Javier Parcet\thanks{
    Partially supported by the Spanish Grant \texttt{PID2019-107914GB-I00} 
    ``Fronteras del An\'alisis Arm\'onico'' (MCIN / PI J. Parcet).
  }
  \ and 
  Runlian Xia\thanks{
    The three authors were partly supported by the Severo Ochoa Grant \texttt{CEX2019-000904-S} (ICMAT), 
    funded by \texttt{MCIN/AEI 10.13039/501100011033}.
  } 
}
\date{}
\begin{document}

\maketitle

\null

\begin{abstract}
  Let $T_m$ be a noncommutative Fourier multiplier. In recent work,
Mei and Ricard introduced a noncommutative analogue of Cotlar's 
identity in order to prove that certain multipliers are bounded on
the noncommutative $L_p$-spaces of a free group. 
Here, we study Cotlar-type identities in full generality, 
giving a closed characterization for them in terms of $m$:
 \[
   \big( m(g h) - m(g) \big) \, \big( m(g^{-1}) - m(h) \big) = 0, 
   \; \forall g  \in \sG\setminus \{e\}, h \in \sG.
 \]
Using a geometric argument, we prove that if $X$ is a tree 
---or more generally an $\RR$-tree--- on which $\sG$ acts and $m$ lifts to a 
function $\widetilde{m}: X \to \CC$ that is constant on the connected subsets of $X \setminus \{x_0\}$, 
then $m$ satisfies Cotlar's identity and thus $T_m$ is bounded in $L_p$ for $1 < p < \infty$.
This result establishes a new connection between group actions on $\RR$-trees and Fourier multipliers. We show that $m$ is trivial when the action has global fixed points. This machinery allows us to simultaneously generalize the free group transforms of Mei and Ricard and the theory of Hilbert transforms in left-orderable groups, which follows from Arveson's subdiagonal algebras. Using Bass-Serre theory, we construct new examples of Fourier multipliers in groups. These include new families like Baumslag-Solitar groups. We also show that a natural Hilbert transform in $\PSL_2(\CC)$ satisfies Cotlar's identity when restricted to the Bianchi group $\PSL_2(\ZZ[\sqrt{-1}])$.
\end{abstract}

\vskip-20pt 

\null

\section*{Introduction}

The Hilbert transform was introduced by Hilbert in 1912 as part of his investigation of the Riemann problem in the realm of complex analysis \cite{Hilbert1989}. Indeed, it may be regarded as the operator describing the boundary behavior of the harmonic conjugate in the upper half plane. Explicitly, it is given by
\begin{equation}
  \label{eq:HT}
  \tag{HT}
  H f(x) 
  \, = \, 
  \frac1{\pi} \lim_{\varepsilon \to 0^+} \int\displaylimits_{|x - y| > \varepsilon} \frac{f(y)}{x - y} \, dy.
\end{equation}
Equivalently, it is the Fourier multiplier $(Hf)^\wedge(\xi) = i \, \sgn(\xi) \, \widehat{f}(\xi)$ \cite{Duoan2001Book}. 
In 1924, M. Riesz proved its $L_p$-boundedness for all $p \in 2 \ZZ_+$ using an \textit{ad hoc} complex analysis argument \cite{Riesz1924Conjugate, Riesz1928ConjugateMZ}. By duality and Marcinkiewicz's interpolation this yields $L_p$-boundedness for every $1 < p < \infty$. Afterwards, Kolmogorov and Calder\'on-Zygmund found proofs giving the weak type $(1,1)$ \cite{Kolmogorov1925Conjugate, SteinP1933, Calderon1950}. 

In 1955, Cotlar proved the $L_p$-boundedness of the Hilbert transform through an extremely simple argument \cite{Cotlar1955Unified}. He showed that $H$ is $L_p$-bounded for $p = 2^k$ recursively ---from the trivial case $p=2$---using his elegant \emph{Cotlar identity}
\begin{equation}
  \label{eq:ClassicalCotlar}
  \tag{Classical Cotlar}
  | H f |^2 \, = \, 2 H \big( f \, H f \big) -  H \big( H |f|^2 \big).
\end{equation}
A key point in our work is to notice that Cotlar's identity and their generalizations have a nicer expression at the frequency side of the Fourier transform. As an illustration, notice that an Euclidean Fourier multiplier $(T_m f)^\wedge(\xi) = m( \xi) \, \widehat{f}(\xi)$ satisfies Cotlar's identity precisely when
\begin{equation}
  \tag{Classical ${\mathrm{C}\widehat{\mathrm{otla}}\mathrm{r}}$}
  \label{eq:DualCotlarRn}
  \big( m(\xi + \eta) - m( \xi) \big) \big( m(-\xi) - m(\eta) \big) = 0,
  \quad \mbox{ a.e } \xi, \eta \in \RR.
\end{equation}
In fact, any bounded function satisfying the classical {\hyperref[eq:DualCotlarRn]{Cotlar}} identity above will be bounded on $L_p(\RR)$ for every $1<p<\infty$. 
In this article, we will investigate similar identities on more general locally compact groups and their von Neumann algebras. A pioneering work in this direction was due to Mei and Ricard \cite{MeiRicard2017FreeHilb}, where the authors deployed a noncommutative analogue of \hyperref[eq:ClassicalCotlar]{Cotlar identity}, which holds in the context of amalgamated free product von Neumann algebras. The main goal of this article is to further Mei and Ricard's technique beyond free groups by illuminating the hidden connection between noncommutative Cotlar identities and group actions on trees and other tree-like structures ---like $\RR$-trees and uniquely arcwise connected spaces.


\textbf{Noncommutative Fourier multipliers.}
Here we will deal with operators analogous to the Hilbert transform over group algebras. Let $\sG$ be a locally compact group. 
Its (left regular) group von Neumann algebra is defined as
\[
  \VN \sG := \wstspan\big\{ \lambda_g : g \in \sG \big\} \subseteq \B(L_2 \sG).
\] 
When $\sG$ is Abelian, $\VN \sG$ is isomorphic with $L_\infty(\widehat{\sG})$, the $L_\infty$-space over the Pontryagin dual of $\sG$. Thus, $\VN \sG$ behaves as a generalization of the Pontryagin dual for noncommutative groups. When $\sG$ is unimodular, the algebra $\VN \sG$ admits a normal, semifinite, and faithful tracial weight called the \emph{Plancherel trace} \cite[Chapter 8]{Ped1979}, with respect to which the noncommutative $L_p$-spaces $L_p(\VN \sG)$ are defined \cite{Terp1981lp, PiXu2003}. These $L_p$-spaces straightforwardly generalize the spaces of $L_p$-integrable elements over the dual of $\sG$. As such, many classical problems of Fourier analysis over $L_p$-spaces find an analogue in the noncommutative setting. A prominent example is the study of the $L_p$-boundedness of \emph{Fourier multipliers}. Indeed, given $m: \sG \to \CC$, the \emph{Fourier multiplier of symbol $m$} will be the ---potentially unbounded--- linear operator $T_m: D \subseteq \VN \sG \to \VN \sG$ given by linear extension of
\[
  T_m(\lambda_g) = m(g) \, \lambda_g.
\]
The boundedness of Fourier multipliers over noncommutative $L_p$-spaces presents challenges that are absent in the classical setting. A key difficulty is the extension of singular integral techniques to von Neumann algebras. Although steps toward a noncommutative Calder{\'o}n-Zygmund theory had been taken \cite{Par2009CZ, GonJunPar2017singular, JungeMeiParcetXia2019ACZ, CadiCondeParcet2021}, a fully noncommutative Calder\'on-Zygmund theory capable of yielding weak $(1,1)$ has not been found outside of semi-commutative or nilpotent settings. 

As hinted before, one possible way of overcoming this difficulty was found in  Mei and Ricard's article \cite{MeiRicard2017FreeHilb}. The noncommutative analogue of the Cotlar identity developed in their paper allowed them to prove that functions $m: \FF_2 \to \CC$ over the free group whose value $m(\omega)$ depends only on the starting letter $\{a, a^{-1}, b, b^{-1}\}$ of the reduced word $\omega \in \FF_2$ give rise to  bounded Fourier multipliers  on $L_p$, i.e.,  
\begin{equation}
  \label{eq:MR}
  \tag{MR}
  \big\| T_m: L_p(\VN \FF_2) \to L_p(\VN \FF_2) \big\|_\cb
  \, < \, \infty, \quad \mbox{ for every } 1 < p < \infty.
\end{equation}
In this paper, we will study a new geometric way to define $L_p$-bounded Fourier multipliers on groups admitting  actions on  tree-like structures, and we will see in Section \ref{S:BS} that this recovers \eqref{eq:MR} as a particular example.

\textbf{Noncommutative Cotlar identities.} Let $\sG_0 \subseteq \sG$ be an open subgroup of the locally compact and unimodular group $\sG$. 
It is trivial to see that $\sG_0$ is also unimodular and that $\VN \sG_0 \subseteq \VN \sG$ is a complemented inclusion, that is, an inclusion admitting a normal conditional expectation $\EE: \VN \sG \to \VN \sG_0$. 
We will say that a potentially unbounded multiplier $T_m$ satisfies a \emph{noncommutative Cotlar identity} with respect to the von Neumann subalgebra $\VN \sG_0$ iff
\begin{equation}
  \tag{Cotlar}
  \label{eq:NCCotlar}
  \EE^\perp \big[ T_m(f) \, T_m(f)^\ast \big] 
  \, = \, 
  \EE^\perp 
  \left[ 
    T_m \big( f \, T_m(f)^\ast \big) + T_m \big( f \, T_m(f)^\ast \big)^\ast - T_m \big( T_m(f f^\ast)^\ast \big)
  \right],
\end{equation}
where $\EE^\perp = (\Id - \EE)$.

We have distilled an easily verifiable closed formula \eqref{eq:CotlarFactor} for $m$ that is equivalent to \eqref{eq:NCCotlar} above, see Theorem \ref{thm:ClosedFormulaCotlar}. Since with an additional  assumption on the symbol, the Cotlar's formula implies $L_p$-boundedness, we obtain the following theorem.

\begin{ltheorem}
  \label{thm:TheoremA}
  Let $\sG$ be a locally compact unimodular group, $\sG_0 \subseteq \sG$ {a closed} subgroup and $m: \sG \to \CC$ a 
  left $\sG_0$-invariant bounded and measurable function. If $m$ satisfies that
  \begin{equation}
    \tag{${\mathrm{C}\widehat{\mathrm{otla}}\mathrm{r}}$}
    \label{eq:CotlarFactor}
    \big( m(g^{-1} ) - m(h) \big) \, \big( m(gh) - m(g) \big) \, = \, 0,
    \quad \text{ {for almost every } } g \in \sG \setminus \sG_0, \, h \in \sG,
  \end{equation}
  then $T_m$ is $L_p$-bounded for $1<p<\infty$ and furthermore
  \begin{nsecequation}
    \label{eq:TheoremABound}
    \big\| T_m: L_p(\VN \sG) \to L_p(\sG) \big\| \, \lesssim \, \left( \frac{p^2}{p-1} \right)^{\beta} \| m \|_\infty, 
    \quad \mbox{ with } \beta = \log_2(1+\sqrt{2}).
  \end{nsecequation}
\end{ltheorem}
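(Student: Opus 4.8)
The plan is to split the statement into two independent pieces: first, that the closed formula \eqref{eq:CotlarFactor} is equivalent to the operator-level Cotlar identity \eqref{eq:NCCotlar} (this is Theorem \ref{thm:ClosedFormulaCotlar}, which I may invoke); and second, that the Cotlar identity \eqref{eq:NCCotlar} together with the boundedness of $m$ implies the claimed $L_p$-estimate. So the real content to establish here is the implication ``Cotlar identity $\Rightarrow$ $L_p$-boundedness with the stated constant.'' First I would recall the formal structure of Cotlar's recursion: on $L_2(\L\sG)$ the multiplier $T_m$ is obviously bounded (indeed $\|T_m\|_{L_2\to L_2}=\|m\|_\infty$ since the $\lambda_g$ form an orthonormal-type system under the Plancherel trace). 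The Cotlar identity, read in $L_2$, expresses $\EE^\perp[T_m(f)T_m(f)^*]$ in terms of $T_m$ applied to products of $f$ with $T_m(f)^*$. The idea is to bootstrap from $L_{2^k}$ to $L_{2^{k+1}}$: given $f\in L_{2^{k+1}}(\L\sG)$, one has $|T_m f|^2=T_m(f)^*T_m(f)$ (or its $\EE^\perp$-part), and the right-hand side of the identity involves $T_m$ of terms like $f\,T_m(f)^*$, whose $L_{2^k}$-norm is controlled by $\|f\|_{2^{k+1}}\|T_m f\|_{2^{k+1}}$ via the noncommutative Hölder inequality. Combining with the inductive bound on $L_{2^k}$ one gets a quadratic inequality for $N_p:=\|T_m:L_p\to L_p\|$ of the shape $N_{2p}^2 \lesssim N_p\,N_{2p} + N_p$, from which $N_{2p}\lesssim N_p$ follows, and iterating gives polynomial-in-$p$ growth. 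Then complex interpolation between consecutive dyadic exponents (and duality to handle $1<p<2$) yields the bound for all $1<p<\infty$.

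The technical subtlety is that $\EE^\perp$ appears on both sides: one is not computing $\|T_m f\|_{2p}$ directly but only the ``off-diagonal'' part $\EE^\perp[|T_m f|^2]$, so the recursion must be carried out relative to the conditional expectation $\EE:\L\sG\to\L\sG_0$. The correct framework is that of conditional (Haagerup) $L_p$-spaces, or equivalently one argues by first treating the diagonal part $\EE[|T_m f|^2]$ separately. Here one uses that $m$ is left $\sG_0$-invariant: this is exactly what guarantees that the compression of $T_m$ to $\L\sG_0$, or rather the behaviour of $T_m$ modulo $\EE$, is well-behaved — in particular $\EE\circ T_m$ relates to a multiplier on the smaller algebra so that the diagonal contribution $\EE[T_m(f)T_m(f)^*]$ can be estimated by the $L_p(\L\sG_0)$-boundedness of a related multiplier, handled by the same induction or absorbed. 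Concretely, I expect a decomposition $\|T_m f\|_{2p}^2 = \|\,\EE[|T_m f|^2]\,\|_p + \|\,\EE^\perp[|T_m f|^2]\,\|_p$ up to constants (this is where the constant $\beta=\log_2(1+\sqrt2)$ will ultimately come from — it is the exponent produced by the quadratic recursion $x^2 \le x + x\cdot(\text{previous})$ solved across $\log_2 p$ steps), with the second term controlled by the Cotlar identity as above.

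The main obstacle will be making the conditional-expectation bookkeeping rigorous and extracting the sharp constant. Unwinding the Cotlar identity gives, after applying the triangle inequality in $L_p$ and Hölder, something like
\[
  \big\| \EE^\perp[T_m(f)T_m(f)^*] \big\|_p
  \;\le\; 2 \, \| T_m \colon L_p \to L_p \| \cdot \|f\|_{2p} \, \|T_m f\|_{2p}
  \;+\; \| T_m \colon L_p \to L_p \| \cdot \| T_m(f f^*) \|_p,
\]
and the last term is itself bounded by $\|T_m:L_p\to L_p\|\cdot\|f\|_{2p}^2$. Normalising $\|f\|_{2p}=1$ and writing $Y=\|T_m f\|_{2p}$, $A_p=\|T_m:L_p\to L_p\|$, and carefully reinserting the diagonal part one lands on an inequality of the form $Y^2 \le c\,A_p\,Y + c\,A_p$; solving the quadratic for $Y$ gives $Y \le c'\sqrt{A_p}$ roughly, i.e. $A_{2p}\lesssim \sqrt{A_p}\cdot(\text{lower-order})$ — the precise shape determines whether the accumulated exponent is $\log_2(1+\sqrt2)$, and pinning that down requires tracking the constant $\sqrt 2$ that emerges from completing the square, which is exactly where $1+\sqrt2$ enters. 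A secondary obstacle is justifying that the manipulations, valid a priori only for $f$ in a dense subalgebra (finitely supported combinations of the $\lambda_g$) with all products lying in the relevant $L_p$-spaces, extend by density to all of $L_p(\L\sG)$; this is standard but must be stated. Finally, interpolation and duality upgrade the dyadic estimate to continuous $p$ and to $1<p<2$, and one checks the implicit constant in \eqref{eq:TheoremABound} absorbs the finitely many interpolation losses.
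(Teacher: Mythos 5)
Your plan follows the paper's own route: reduce \eqref{eq:CotlarFactor} to the operator identity \eqref{eq:CotlarNC} via Theorem \ref{thm:ClosedFormulaCotlar}, then run the Mei--Ricard extrapolation from $L_{2^k}$ to $L_{2^{k+1}}$ (Proposition \ref{prp:LpExtrapolation}), controlling the diagonal part $\EE[\,T_m(f)T_m(f)^\ast]$ through left $\L\sG_0$-modularity (Lemma \ref{lem:ExpectationBdd}), and finishing with interpolation and duality. So the architecture is right; the issues are in how you close the recursion.

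The concrete slip is in your resolution of the quadratic inequality, and it contradicts your own earlier (correct) statement. From $Y^2\le c\,A_p\,Y+c\,A_p$ (or, as in the paper, $a_{2p}^2\le 2a_pa_{2p}+a_p^2+4$, see \eqref{eq:Recurrence1}) one does \emph{not} get $Y\lesssim\sqrt{A_p}$: the term linear in $Y$ dominates, so solving the quadratic gives $Y\le cA_p+\sqrt{cA_p}$, i.e.\ a fixed multiplicative loss per doubling of $p$, not a square-root gain. Were $A_{2p}\lesssim\sqrt{A_p}$ true, iteration would give norms bounded uniformly in $p$, which is false already for the classical Hilbert transform. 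The correct bookkeeping is to complete the square in $a_{2p}^2\le 2a_pa_{2p}+a_p^2+4$, yielding $\sqrt2\,a_{2p}\le a_{2p}+a_p+2$, hence $a_{2p}\le(1+\sqrt2)(a_p+2)$; iterating over $\log_2 p$ doublings is exactly what produces $p^{\log_2(1+\sqrt2)}$. A second point if you want the stated exponent rather than merely polynomial growth: you cannot estimate the right-hand side of \eqref{eq:CotlarNC} by simply discarding $\EE^\perp$ via $\|\EE^\perp X\|_p\le 2\|X\|_p$, since that doubles the coefficient of the $a_pa_{2p}$ term and degrades $\beta$ (to roughly $\log_2(2+\sqrt6)$). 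The paper instead bounds the three $\EE$-parts separately, using left modularity and the conditional H\"older inequality to show they are controlled by $\|\EE T_m(\1)\|_\infty=|m(e)|\le\|m\|_\infty$ times lower-order quantities (Lemma \ref{lem:ExpectationBdd}); this is also precisely why the unit is adjoined to the dense subalgebra $\A$ when $\sG$ is not discrete, a point your density remark should absorb.
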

%
The theorem above decouples into two different statements depending on whether $\sG_0 \subseteq \sG$ is open or has empty interior. In the case of an open subgroup, it holds that $0 < \mu(\sG_0)$ and thus condition \eqref{eq:CotlarFactor} has to be verified in a non-total set of pairs $g, \, h$. In this case, the associated Fourier multiplier $T_m$ satisfies \eqref{eq:NCCotlar} relative to the conditional expectation from $\VN \sG$ to $\VN \sG_0$. In contrast, in the case of a subgroup $\sG_0$ of empty interior we have that $\mu(\sG_0) = 0$ and thus \eqref{eq:CotlarFactor} has to be verified almost everywhere, in which case its associated Fourier multiplier satisfies the non-relative version of the Cotlar identity \eqref{eq:CotlarNoExp}. Furthermore, in the case of $\sG_0$ of empty interior, the condition of $m: \sG \to \CC$ being left $\sG_0$-invariant can be dropped, see Remark \ref{rmk:NonrelativeCotlar}. Although our most novel examples would be discrete groups, the case of $\mu(\sG_0) = 0$ is still useful as it recovers the classical case of the Hilbert transform on $\RR$.


The first advantage of the result above is that it makes the verification of the Cotlar identity for previously known cases almost trivial. Indeed, restricting ourselves to the discrete for clarity, we can easily show that it holds in the following situations.
\begin{enumerate}[label={\rm \textbf{(\arabic*)}}, ref={\rm {\arabic*}}, leftmargin=1.25cm]
  \item \label{itm:ExampleZZ}
  \textbf{Classical case.} In the classical case of $\sG = \ZZ$ and $\sG_0 =\{0\}$ with $m(x) = \sgn(x)$ we only have to verify that either $m(x+y) = m(x)$ or $m(-x) = m(y)$. But that is trivial since either $x$ and $y$ have different signs or $x + y$ and $x$ share the same sign.
  
  \item \label{itm:ExampleFree}
  \textbf{Free product case.} If $\sG = \sG_1 \ast \sG_2$ and $\sG_0 = \{e\}$ with both $\sG_2$ and $\sG_2$ discrete, then any function $m(\omega)$ such that its value depends on the first letter of the reduced word of $g$ satisfies \eqref{eq:CotlarFactor}. Indeed, we need to prove that either $m(g h) = m(g)$ or that $m(g^{-1}) = m(h)$. Assume the first equality fails, then the first letter of $g h$ and that of $g$ are different, but that can only happen if the reduced word of $h$ begins with the reduced word of $g^{-1}$. If that is the case $g^{-1}$ and $h$ have the same starting letter and thus $m(g^{-1}) = m(h)$. This family of examples was explored by Mei and Ricard \cite{MeiRicard2017FreeHilb}.
\end{enumerate}

A natural question that we answer affirmatively is whether there are examples of groups that go beyond those two categories. In order to explore that question it seems natural to search for bounded functions $m: \sG\to \CC$ satisfying \eqref{eq:CotlarFactor} with $\sG$ having Serre's property $(\mathrm{FA})$ \cite{Serre1980Trees}. A group $\sG$ is said to have Serre's property (FA) iff every action of $\sG$ on a tree has a global fixed point (a vertex in the tree which is fixed by the action of any $g\in \sG$). More deeply, Serre proved that a discrete group $\sG$ has property $(\mathrm{FA})$ iff it is finitely generated, it does not possess a quotient isomorphic to $\ZZ$ and it cannot be expressed as a nontrivial amalgamated free product $\sG = \sG_1 \ast_{A} \sG_2$. Therefore a group with property $(\mathrm{FA})$ is excluded from Examples \ref{itm:ExampleZZ} and \ref{itm:ExampleFree}. Although there are interesting examples with property $(\mathrm{FA})$, we also give applications to groups like Baumslag-Solitar groups $\BS(1,m)$ and the Bianchi group $\PSL_2(\O_{-1})$, which, despite failing property $(\mathrm{FA})$, admit bounded functions satisfying Cotlar's identity for reasons unrelated to them having $\ZZ$-quotients or being free products.

\textbf{Groups acting on uniquely arcwise connected spaces.} The closed formula in \eqref{eq:CotlarFactor} highlights a surprising connection between Cotlar's identity and geometric group theory. A topological space $X$ is arcwise connected iff given two points $x, y \in X$ there exists an injective continuous path $\gamma: [0,1] \to X$ joining $x$ and $y$. An arcwise connected space will be said to be a \emph{uniquely arcwise connected space} or \emph{UAC space} iff the path joining $x$ and $y$ is unique \cite{Bestvina2002}. Let $\sG \acts X$ be a topological action on a UAC space and fix a root $x_0 \in X$ with $\sG_0$ being the stabilizer $\st_{x_0}$ of $x_0$. Observe that $X \setminus \{x_0\}$ is given by 
\begin{nsecequation}
  \label{eq:DecompositionCC}
  X \setminus \{x_0\} \, = \, \bigsqcup_{\beta} X_\beta,
\end{nsecequation}
where each $X_\beta$ is arcwise connected. In some cases, the disjoint union above can be taken as a topological characterization of $X$ and the subsets $X_\beta$ as connected components. Nevertheless, there are UAC spaces for which $X \setminus \{x_0\}$ is not a disjoint union of connected components. Observe that the action of $\sG_0$ restricted to $X \setminus \{x_0\}$ permutes the $X_\beta$. The following theorem gives a machinery to get multipliers satisfying Cotlar's identity from actions on UAC spaces.

\begin{ltheorem}
  \label{thm:TheoremB}
  Let $\sG \acts X$ be a topological action on a UAC space. Fix $x_0 \in X$, $\sG_0$ being the stabilizer of $x_0$, $\sG_0= \st_{x_0}$, and 
  let $\widetilde{m}: X \to \CC$ be a bounded function satisfying that
  \begin{enumerate}[label={\rm \textbf{(\roman*)}}, ref={\rm {(\roman*)}}]
    \item \label{itm:TheoremB.1}
    $\widetilde{m}$ is constant over each $X_\beta$ of \eqref{eq:DecompositionCC}.
    \item \label{itm:TheoremB.2}
    $\widetilde{m}$ is constant over $\sG_0$ orbits, 
    ie $\, \widetilde{m}{|}_{X_\beta} = \widetilde{m}{|}_{X_\alpha}$ if there is an element 
    $h \in \sG_0$ such that $X_\beta = h \cdot X_\alpha$.
  \end{enumerate}
  Then, the function $m: \sG \to \CC$ given by $m(g) \, = \, \widetilde{m}(g \cdot x_0)$
  satisfies \eqref{eq:CotlarFactor} and therefore
  \begin{nsecequation}
    \big\| T_m: L_p(\VN \sG) \to L_p(\VN \sG) \big\| \, \lesssim \, \Big( \frac{p^2}{p - 1} \Big)^\beta \| m \|_\infty
    \quad \mbox{ with } \beta = \log_2(1+\sqrt{2}).
  \end{nsecequation}
\end{ltheorem}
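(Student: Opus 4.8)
The plan is to verify the closed formula \eqref{eq:CotlarFactor} for the symbol $m(g) = \widetilde{m}(g\cdot x_0)$ and then invoke Theorem~\ref{thm:TheoremA}; boundedness of $\widetilde{m}$ gives boundedness of $m$ for free, so the whole content is the functional equation. First I would fix language: the pieces $X_\beta$ in \eqref{eq:DecompositionCC} should be read as the arcwise connected components of $X\setminus\{x_0\}$ (the canonical partition into arcwise connected sets, which, as remarked after \eqref{eq:DecompositionCC}, need not be the partition into topological components). Since $\sG$ acts by homeomorphisms, each $g\in\sG$ maps arcs to arcs, and uniqueness of arcs forces $g\cdot[x,y]=[gx,gy]$; in particular every $k\in\sG_0$ preserves $X\setminus\{x_0\}$ and permutes the $X_\beta$, so hypothesis \ref{itm:TheoremB.2} says exactly that $\widetilde{m}$ is $\sG_0$-invariant on $X\setminus\{x_0\}$ (and trivially at $x_0$). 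Pulling this back to $\sG$ shows $m$ is left $\sG_0$-invariant, so Theorem~\ref{thm:TheoremA} will apply the moment \eqref{eq:CotlarFactor} is in hand.

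The geometric core — and the step I expect to need the most care — is the following reformulation of \ref{itm:TheoremB.1}: if $y,z\in X\setminus\{x_0\}$ and the arc $[y,z]$ does not meet $x_0$, then $\widetilde{m}(y)=\widetilde{m}(z)$. Indeed, by uniqueness of arcs, $y$ and $z$ lie in the same $X_\beta$ iff some arc between them avoids $x_0$, iff $[y,z]\not\ni x_0$; then \ref{itm:TheoremB.1} gives the claim. I would pair this with the elementary sub-arc fact: parametrising $[x_0,y]$ by an injective path, if $p$ is a point of $[x_0,y]$ other than $x_0$, then the portion of the path running from $p$ to $y$ is, by uniqueness, exactly $[p,y]$, and it misses $x_0$.

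With these two facts the verification of \eqref{eq:CotlarFactor} is a short dichotomy. Fix $g\in\sG\setminus\sG_0$ and $h\in\sG$; we must show $m(g^{-1})=m(h)$ or $m(gh)=m(g)$, i.e.\ $\widetilde{m}(g^{-1}x_0)=\widetilde{m}(hx_0)$ or $\widetilde{m}(gx_0)=\widetilde{m}(gh\cdot x_0)$. If $h\in\sG_0$ then $gh\cdot x_0=g\cdot x_0$ and the second equality is clear; if $gh\in\sG_0$ then $h\cdot x_0=g^{-1}\cdot x_0$ and the first is clear. Otherwise $gx_0,\,gh\cdot x_0,\,g^{-1}x_0,\,hx_0$ are all $\ne x_0$ and $hx_0\ne g^{-1}x_0$, and I split on whether $x_0\in[gx_0,\,gh\cdot x_0]$. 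Applying the homeomorphism $g^{-1}$ turns $[gx_0,\,gh\cdot x_0]$ into $[x_0,\,hx_0]$ and $x_0$ into $g^{-1}x_0$, so $x_0\in[gx_0,\,gh\cdot x_0]$ iff $g^{-1}x_0\in[x_0,\,hx_0]$. If it holds, $g^{-1}x_0$ is an interior point of $[x_0,\,hx_0]$, hence by the sub-arc fact $x_0\notin[g^{-1}x_0,\,hx_0]$, so $g^{-1}x_0$ and $hx_0$ share an $X_\beta$ and $m(g^{-1})=m(h)$. If it fails, $x_0\notin[gx_0,\,gh\cdot x_0]$, so $gx_0$ and $gh\cdot x_0$ share an $X_\beta$ and $m(g)=m(gh)$. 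In every case \eqref{eq:CotlarFactor} holds, and Theorem~\ref{thm:TheoremA} supplies the stated bound. The only genuine obstacle is the point-set bookkeeping of the second paragraph — pinning down that \eqref{eq:DecompositionCC} is the arcwise-component partition and that arc uniqueness converts "same $X_\beta$'' into "$x_0\notin[y,z]$''; once that bridge is built, the group-theoretic dichotomy above is immediate.
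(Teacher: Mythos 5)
Your proof is correct and is essentially the paper's own argument: the same arc-through-the-root dichotomy sketched right after Theorem~\ref{thm:TheoremB} (and invoked in Proposition~\ref{prp:modelCotlar}), namely that applying $g^{-1}$ to the unique arc between $g\cdot x_0$ and $gh\cdot x_0$ forces either $m(gh)=m(g)$ or $m(g^{-1})=m(h)$, followed by an appeal to Theorem~\ref{thm:TheoremA}. The only difference is presentational — you make explicit the degenerate cases $h\in\sG_0$, $gh\in\sG_0$ and the point-set facts (same $X_\beta$ iff $x_0\notin[y,z]$, sub-arcs avoid $x_0$) that the paper leaves implicit.
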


By Remarks \ref{rmk:NonrelativeCotlar} and \ref{rmk:nonRelativeCaseModel}, condition \ref{itm:TheoremB.2} can be dropped when the interior of $\st_{x_0}$ is empty. The proof of Theorem \ref{thm:TheoremB} is so neat that can be tightly summarized here. First, notice that the condition \ref{itm:TheoremB.2} ensures that $m$ is left-$\sG_0$-invariant. Condition \ref{itm:TheoremB.1} on the other hand implies that \eqref{eq:CotlarFactor} holds. To see this, assume that $m(gh) \neq m(g)$. Since the two values are different, the arcwise connected subsets of $X \setminus \{x_0\}$ in which $g h \cdot x_0$ and $g \cdot x_0$ lay must be different, see Figure \ref{fig:UACaction}. Thus, there is a unique arc joining them that passes through $x_0$. Applying $g^{-1}$ to this arc gives an arc starting at $x_0$ and which passes by $g^{-1} \cdot x_0$ and $h \cdot x_0$ in that order. But, as such, $g^{-1} \cdot x_0$ and $h \cdot x_0$ must lay in the same arcwise connected subset and thus $m(h) = m(g^{-1})$.
\begin{center}
  \begin{figure}[ht]
    \centering
    \includegraphics[scale=1]{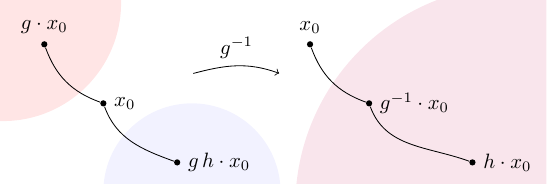}
    \caption{Action over paths.}
    \label{fig:UACaction}
  \end{figure}
\end{center}


\textbf{Left-orderable groups.} A family of examples that fits right into the model of Theorem \ref{thm:TheoremA} is that of \emph{left-orderable groups}. Those are groups admitting a \emph{total} or \emph{linear} order $(\sG, \preceq)$ that is invariant under left translations, ie $g \preceq h$ $\equivalent$ $a g \preceq a h$ for every $a, g, h \in \sG$. We will write $g \prec h$ when $g \preceq h$ but $g \neq h$. For left-orderable groups the following result holds for their sign function. 

\begin{ltheorem}
  \label{thm:TheoremC}
  Let $\sG$ be a left-orderable group and $\sgn: \sG \to \CC$ be the function 
  \[
    \sgn(g) \, = \,
    \begin{cases}
       1 & \mbox{ when } e \prec g\\
       0 & \mbox{ when } g = e\\
      -1 & \mbox{ when } g \prec e.
    \end{cases}
  \]
  Then $H = T_\sgn: L_p(\VN \sG) \to L_p(\VN \sG)$ satisfies that
  \[
    \big\| H: L_p(\VN \sG) \to L_p(\VN \sG) \big\| 
    \, \lesssim \,  \frac{p^2}{p - 1},
    \quad \mbox{ for every } 1 < p < \infty.
  \]
\end{ltheorem}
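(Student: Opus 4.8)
The plan is to deduce Theorem \ref{thm:TheoremC} from Theorem \ref{thm:TheoremA} by checking that the sign function of a left-orderable group satisfies the Cotlar factorization identity \eqref{eq:CotlarFactor} in the non-relative case, i.e. with $\sG_0 = \{e\}$. So I take $\sG$ left-orderable with its invariant total order $\preceq$, set $m = \sgn$, and I must verify that for every $g \in \sG \setminus \{e\}$ and every $h \in \sG$,
\[
  \big( \sgn(g^{-1}) - \sgn(h) \big)\,\big( \sgn(gh) - \sgn(g) \big) \, = \, 0.
\]
Note first that $m$ is trivially left-$\{e\}$-invariant, and $m$ is bounded, so the hypotheses of Theorem \ref{thm:TheoremA} are met once the identity is established; the quantitative bound follows because the symbol is $\{-1,0,1\}$-valued, which is the ``additional assumption on the symbol'' alluded to before Theorem \ref{thm:TheoremA}, and in fact since here the subalgebra is trivial one can sharpen the exponent from $\beta$ to $1$ (the Cotlar identity with $\EE = 0$ gives the cleaner recursion, yielding the stated $p^2/(p-1)$).

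For the combinatorial core I argue by contradiction: suppose both factors are nonzero, so $\sgn(g^{-1}) \neq \sgn(h)$ and $\sgn(gh) \neq \sgn(g)$. Since $g \neq e$, we have $\sgn(g) = \pm 1$, and by the antisymmetry $\sgn(g^{-1}) = -\sgn(g)$ (this uses left-invariance: $e \prec g \Leftrightarrow g^{-1} \prec e$, multiplying by $g^{-1}$ on the left). I split into the two cases $e \prec g$ and $g \prec e$; by the symmetry $g \mapsto g^{-1}$ it suffices to treat $e \prec g$. Then $\sgn(g) = 1$ and $\sgn(g^{-1}) = -1$, so $\sgn(g^{-1}) \neq \sgn(h)$ forces $\sgn(h) \in \{0, 1\}$, i.e. $e \preceq h$; and $\sgn(gh) \neq \sgn(g) = 1$ forces $\sgn(gh) \in \{-1, 0\}$, i.e. $gh \preceq e$. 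But $e \preceq h$ combined with left-invariance (multiply by $g$) gives $g \preceq gh$, and $e \prec g$ gives $e \prec g \preceq gh$, whence $e \prec gh$, i.e. $\sgn(gh) = 1$ — contradicting $gh \preceq e$. This closes that case, and the other is symmetric, so \eqref{eq:CotlarFactor} holds.

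The one genuinely delicate point is not the order-theoretic manipulation — that is elementary — but making sure Theorem \ref{thm:TheoremA} really does apply in the \emph{degenerate} $\sG_0 = \{e\}$ (or ``$\sG_0$ empty'') regime and extracting the claimed exponent $1$ rather than $\beta$. I would invoke Remark \ref{rmk:NonrelativeCotlar}, which records exactly this non-relative version of the Cotlar machinery: with $\EE = 0$ the identity \eqref{eq:NCCotlar} reads $|Hf|^2 = H(f H f^\ast)^\ast + H(f H f^\ast) - H(H|f|^2)^\ast$, the direct analogue of \eqref{eq:ClassicalCotlar}, and the standard Cotlar bootstrap on $L_{2^k}$ followed by interpolation and duality gives a bound of order $p^2/(p-1)$ uniformly in $1 < p < \infty$. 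One should also observe that left-orderability forces $\sG$ to be torsion-free but imposes no amenability or finiteness constraint, so this genuinely covers groups — e.g. torsion-free nilpotent groups, braid groups, many one-relator groups — lying outside the free-product framework of \cite{MeiRicard2017FreeHilb}, recovering in particular the classical Hilbert transform on $\ZZ$ of Example \ref{itm:ExampleZZ} as the abelian instance. The connection to Arveson's subdiagonal algebras mentioned in the abstract is then the remark that $\{g : e \prec g\}$ is exactly the positive cone defining such a subalgebra in $\L\sG$, but this is not needed for the proof itself.
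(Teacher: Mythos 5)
Your verification of \eqref{eq:CotlarFactor} for $\sgn$ relative to $\sG_0=\{e\}$ is correct and is essentially the paper's argument (the paper phrases it directly: if $\sgn(gh)\neq\sgn(g)$ then $h$ and $g$ have opposite signs, hence $\sgn(h)=\sgn(g^{-1})$). The genuine gap is in the quantitative part. Theorem \ref{thm:TheoremA}, and equally the non-relative Cotlar bootstrap of Remark \ref{rmk:NonrelativeCotlar}, only give the bound $(p^2/(p-1))^{\beta}$ with $\beta=\log_2(1+\sqrt2)>1$, not the claimed $p^2/(p-1)$. Your stated reason for the sharper exponent --- ``since here the subalgebra is trivial one can sharpen the exponent from $\beta$ to $1$'' --- is false: the recursion \eqref{eq:Recurrence1}, $a_{2p}^2\leq 2a_pa_{2p}+a_p^2+4$, contains the term $a_p^2$ coming from $\|H(H(ff^\ast)^\ast)\|_p\leq c_p^2\|f\|_{2p}^2$, and this term survives whether $\EE$ is trivial or not; it is exactly this term that forces the exponent $\beta$. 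Being $\{-1,0,1\}$-valued is likewise not the relevant hypothesis (the ``additional assumption'' before Theorem \ref{thm:TheoremA} is just left $\sG_0$-invariance and boundedness).

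What actually yields $\sO(p)$ growth --- and what the paper uses --- is the antisymmetry $\sgn(g^{-1})=-\sgn(g)$, which you record in your combinatorial step but never exploit for the constant. It gives $m(g)\,\overline{m(g^{-1})}=-1$ for all $g\neq e$, i.e. $\EE^\perp H\,H_\op=-\EE^\perp$, so that the term $H(H(ff^\ast)^\ast)$ in the Cotlar identity collapses (modulo $\EE$) to $-ff^\ast$; this is Proposition \ref{prp:ExtrapolationIdempotent} and Corollary \ref{cor:OptConstantGroup}, which replace the $c_p^2$ term by a constant, produce the recursion $c_{2p}^2\leq 2c_pc_{2p}+\kappa$, and hence $c_p\lesssim p$ for $p\geq 2$, giving $p^2/(p-1)$ after duality and interpolation (this is the noncommutative form of the Gokhberg--Krupnik observation that $H^2=-\Id$ tightens the classical Cotlar recursion). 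With that substitution your proof is complete; as written, it only establishes the weaker bound with exponent $\beta$. A minor additional point: for a non-discrete left-orderable group $\{e\}$ is not open, so one must indeed pass to the non-relative identity \eqref{eq:CotlarNoExp} rather than treat it as the relative case $\sG_0=\{e\}$, as the paper notes at the end of its proof.
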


The boundedness of $H$ in Theorem \ref{thm:TheoremC} can be obtained by showing directly that \eqref{eq:CotlarFactor} holds. Alternatively, it is known that, in the discrete case, a group is left-orderable iff it acts on $\RR$ by order-preserving homeomorphisms. The observation that $\RR$ is a UAC space allows us to prove the result as a consequence of Theorem \ref{thm:TheoremB}. 
In principle, proving Cotlar's identity gives just that the $L_p$-norm grows like $\sO(p^\beta)$ with $\beta = \log_2(1 + \sqrt{2})$, as $p \to \infty$. Nevertheless, a more careful argument allows us to show that the constant can be lowered down to the optimal order $\sO(p)$, as long as $m(g) \overline{m(g^{-1})} = -1$ for $g \in \sG \setminus \{e\}$, see Corollary \ref{cor:OptConstantGroup}. It is also worth noticing that the above transforms for left-orderable group algebras can be seen as a particular example of the Hilbert transforms associated with Arveson's subdiagonal algebras \cite{Arveson1967Analytic}, for which the weak type $(1,1)$ was proved by Randrianantoanina \cite{Randrianantoanina1998Hilbert}, see also \cite[Theorem 8.4]{PiXu2003}. Therefore our geometric model in Theorem \ref{thm:TheoremB} generalizes simultaneously Mei and Ricard's free Hilbert transforms and subdiagonal Hilbert transforms, recovering the best known constants in both cases.
%
Left-orderable groups include:
\begin{itemize}
  \item Torsion-free Abelian groups;
  \item Torsion-free nilpotent groups;
  \item Free groups $\FF_r$;
  \item Braid groups $B_n$;
  \item Right-angled Artin groups;
  \item Baumslag-Solitar groups $\BS(1, n)$ for $n \geq 2$;
  \item Surface groups;
  \item The Thompson group $F$.
\end{itemize} 
It is possible to obtain explicit examples of $L_p$-bounded Fourier multipliers satisfying Cotlar's identity in each of these families of groups.
Furthermore, there are known examples of left-orderable groups that have Serre's property $(\mathrm{FA})$. For instance, let $\widetilde{\PSL}_2(\RR) \onto \PSL_2(\RR)$ be the universal cover of $\PSL_2(\RR)$ and let $D(2,3,7) \subseteq \PSL_2(\RR)$ be the $(2,3,7)$-triangular group. Then, the lifting $\Gamma$ of $D(2,3,7)$ to $\widetilde{\PSL}_2(\RR)$ is an example of a group with Serre's property $(\mathrm{FA})$ that is also left-orderable \cite{Bergman1991Orderable,  CornulierKar2011, Khoi2003cut, Serre1980Trees}.

\textbf{Graphs of groups and Bass-Serre theory.} A wealth of examples of multipliers satisfying \eqref{eq:CotlarFactor} can be obtained from Bass-Serre theory ---which allows to classify groups acting on trees without edge inversions--- see \cite{Serre1980Trees}. Indeed, given a group acting on a tree $\sG \acts T$, it is possible to build a graph by taking the quotient with respect to the action $X = T/\sG$ and associating to each vertex and to each edge its corresponding stabilizer. Due to the lack of edge inversions, the stabilizer of an edge embeds into the stabilizers of its extremes. This structure ---a graph with groups on its edges and vertices and such that the groups at the edges embed into the extremes of said edge--- is called a \emph{graph of groups}. Let us denote it by $\XX$. Like in the case of graphs, it is possible to define a sort universal cover of $\XX$ such that its underlying graph is a tree $\widetilde{X}$ and its fundamental group $\pi_1(\XX)$ acts as Deck's transformations of the covering. The main point of the theory is that $\pi_1(\XX) \cong \sG$ and the action of $\pi_1(\XX) \acts \widetilde{X}$ recovers $\sG \acts T$. 

Our main theorem in Section \ref{S:BS} would be that the multiplier results for symbols depending on the starting letter, like inequality \eqref{eq:MR} from \cite{MeiRicard2017FreeHilb}, extends from free products to general graphs of groups. Let us fix some notation. First, when working with a graph of groups, we will consider that our graph is oriented, that both directions of the edge occur and that there can be both loops and multiple edges between two given vertices. As it is customary, given an edge $y$, we will denote by $\bar{y}$ the \emph{reverse edge} and by $\ori(y) \in \Vrt(X)$ and $\tar(y) \in \Vrt(X)$ the \emph{origin} and \emph{target} vertices of the edge. An orientation would be a subset $\Edg_+ \subseteq \Edg$ that contains either $y$ or $\bar{y}$ for each edge. We will also denote by $\sG_x$ the groups associated to vertices and by $\sH_y$ the groups associated to edges. By construction, we have that $\sH_y = \sH_{\bar{y}}$ and that there are injective homomorphisms $\alpha_y: \sH_y \to \sG_{\tar(y)}$ and $\alpha_{\bar{y}}: \sH_y \to \sG_{\ori(y)}$. We will denote the image group $\alpha_y[\sH_y] \subseteq \sG_{\tar(y)}$ by $\sH_y^y$. Similarly $\sH_{\bar{y}}^{\bar{y}}$ will denote the image of $\alpha_{\bar{y}}$. We can now recall the construction of $\pi_1(\XX)$. Define the auxiliary group $F(\XX)$ given by the free product of all the $\sH_y$ and $\sG_x$ ---where $y$ runs over the edge set $\Edg(X)$ and $x$ runs over the vertex set $\Vrt(X)$--- together with $\FF_{\Edg(X)}$, the free group generated by all the edges, with the following extra relations imposed 
\begin{nsecequation}
    \label{eq:ExtraRelations}
    y^{-1} = \bar{y}
    \;\;\; \text{ and } \;\;\;
    \alpha_y(h) \, y = y \, \alpha_{\bar{y}}(h), \; \forall y \in \Edg(X), h \in \sH_y
\end{nsecequation}
To construct the fundamental group, assume $X$ is connected and choose a base point $x_0 \in \Vrt(X)$. A closed path $c$ that starts and ends at $x_0$ will be a sequence of edges $c = y_1 \, y_2 \, \cdots y_m$, with $\tar(y_i) = \ori(y_{i+1})$ and $\ori(y_1) = x_0 = \tar(y_m)$. The group $\pi_1(\XX,x_0)$ is given by the subset of $F(\XX)$ of elements of the form:
  \begin{nsecequation}
    \label{eq:WordofPathType}
    g = | c, r | 
    \, = \,
    r_0 \, y_1 \, r_1 \, y_2 \, r_2 \cdots y_m \, r_m,
  \end{nsecequation}
where $r_j \in \sG_{\ori(y_{j+1})}$ for $j \leq m -1$ and $r_m \in \sG_{x_0}$. Here $r$ is just notation for the tuple $r = (r_0, \, r_1, \, \cdots r_m)$. We will call the pair $(c,r)$ a \emph{word of type $c$}. Similarly, a word $e \neq g = |c, r|\in \pi$ of type $c$ would be said to be in \emph{normal form} if it cannot be shortened applying the relations \eqref{eq:ExtraRelations}. 

We would like to define a multiplier $m(g)$ depending only on the starting segment $r_0 \, y_1$ of its associated word. But, given $h \in \sH_{y_1}^{y_1}$, we have, in $F(\XX)$, that
\[
    r_0 y_1 = r_0 \, \alpha_{y_1}(h^{-1}) \, \alpha_{y_1}(h) \, y_1 = r_0 \, \alpha_{y_1}(h^{-1}) \, y_1 \alpha_{\bar{y}_1}(h),
\]
as such, two words in normal form representing the same group element $g$ may have different starting segments. Nevertheless, $r_0$ can only change by an element in $\sH_{y_1}^{y_1}$ acting on the right. This motivates the definition of the \emph{space of starting segments} on $x_0 \in \Vrt(X)$ as
\begin{nsecequation}
    \label{eq:DefW}
    W_{x_0} = \bigsqcup_{y \, | \, \ori(y) = x_0} \,
    \sG_{x_0} / \sH_{\bar{y}}^{\bar{y}}
\end{nsecequation}
Given a symbol $m: \pi = \pi_1(\XX,x_0) \to \CC$ we will say that
\begin{itemize}
    \item $m$ \emph{depends on the starting segment} if it exists a function $\widetilde{m}:W_{x_0}\to \CC$ such that
    \begin{nsecequation}
        \label{eq:LiftFirstSegment}
        m( g ) 
        \, = \, 
        \begin{cases}
            \widetilde{m}\big( r_0 \cdot \sH_{\bar{y}_1}^{\bar{y}_1} \big), & \mbox{ when } g \not\in \sG_{x_0}\\
            0 & \mbox{ when } g \in \sG_{x_0},
        \end{cases}
    \end{nsecequation}
    where $g$ is as in \eqref{eq:WordofPathType}.
    \item $m$ \emph{depends on the starting edge} if it not only depends on the staring segment,
    but it is also constant in each of the components $\sG_{x_0} / \sH_{\bar{y}}^{\bar{y}}$, for each $y$ starting in $x_0$, in the disjoint union that forms $W_{x_0}$.
\end{itemize}

\begin{ltheorem}
  \label{prp:InitialSegment}
  Let $\pi = \pi_1(\XX,x_0)$ be the fundamental group of a graph of groups and let $m: \pi \to \CC$ be a function depending only on the initial segment. It holds that
    \begin{enumerate}[label={\rm \textbf{(\roman*)}}, ref={\rm {(\roman*)}}]
      \item \label{itm:InitialSegment1} $m$ satisfies Cotlar's identity \eqref{eq:CotlarNC} relative to $\sG_{x_0}$.
      
      \item \label{itm:InitialSegment2} Furthermore, if $m$ only depends on the starting edge, then it is left $\sG_{x_0}$-invariant. Thus, by Theorem \ref{thm:TheoremA} it induces a bounded Fourier multiplier on $L_p$ for every $1<p<\infty$ satisfying bound \eqref{eq:TheoremABound}.
    \end{enumerate}
\end{ltheorem}
The hypothesis that $m$ depends only on the starting edge is indeed quite restrictive. For instance, if there is only one edge, up to inversions, starting at $x_0$, then a symbol depending on the starting edge is constant for $g \not\in \sG_{x_0}$. Luckily, in many interesting examples the group $\sG_{x_0}$ will be Abelian. In that case, decomposing $\widetilde{m}|_{\sG_{x_0}/\sH_y^y}$ into characters allows us to work with functions depending on the starting segment, see Remark \ref{rmk:NonInvariantCharacters} and Theorem \ref{thm:InitialSegmentG0Abelian}. We will provide a direct proof of Theorem \ref{prp:InitialSegment}. Nevertheless, it also follows from noticing that, if $m$ depends on the initial segment, them it lifts to a function $\widetilde{m}: \widetilde{X} \to \CC$ on the Bass-Serre tree of $\pi_1(\XX,x_0)$ that is constant in connected components of $\widetilde{X} \setminus \{\widetilde{x}_0\}$, see Proposition \ref{prp:BassSerreModel1} for the details. Therefore, symbols depending on the starting segment lay within the template of Theorem \ref{thm:TheoremB}. We will illustrate Theorem \ref{prp:InitialSegment} in the case of free products and Higman-Neumann-Neumann (HNN) extensions. HNN extensions include the Baumslag-Solitar groups $\BS(n,m)$ for which a slightly different Model of multipliers satisfying Cotlar's identity would also be given, see Proposition \ref{cor:HNN2Models}.

\textbf{$\mathbf{PSL_2}(\mathbf{K})$, its lattices and open questions.}
Natural models of Hilbert transforms on a group $\sG$ often appear via the following straightforward idea. Let $\X$ be a geometric object on which $\sG$ acts and assume $\X$ contains a \textit{barrier} $\F \subseteq \X$ such that $\X \setminus \F$ is divided into two separated halves $\X \setminus \F = \X_+ \sqcup \X_-$. Then, given $x_0 \in \F$, a symbol $m$ can be defined as
\[
  m(g) = \1_{\X_+}(g \cdot x_0) - \1_{\X_{-}}(g \cdot x_0).
\]
When $\X = \RR$ and $\F = \{0\}$, the group multiplier induced by $\RR \acts \X$ coincides with the sign function. Thus, these symbols are natural generalizations of the Hilbert transform and we will refer to them as such.
Important instances of this include:
%
\begin{enumerate}[label={\rm \textbf{(\arabic*)}}, ref={\rm (\arabic*)}]
  \item \label{itm:Model.1} \textbf{Hilbert space model.}
  Let $\X = \H$ be a (real) Hilbert space in which $\sG$ acts by affine isometries $\pi(g)$. 
  These isometries are given by $\xi \mapsto \alpha(g) \xi + \beta(g)$, 
  where $\alpha(g)$ is an orthogonal transformation. Let  $\F = \langle v \rangle^\perp$ be the 
  codimension $1$ subspace of vectors perpendicular to $v \in \H \setminus \{0\}$. 
  Choosing $x_0 = 0$ gives the symbol $m(g) = \sgn \left( \langle \beta(g), v \rangle \right)$. 
  These symbols have been studied for finite dimensional $\H$ in \cite[Appendix A]{CasParPerrRic2014} 
  and \cite{ParRog2016Hilbert}.
  
  \item \label{itm:Model.2} \textbf{Manifold model.}
  Choose $\X = M$ as a $n$-dimensional Riemannian manifold in which $\sG$ acts by isometries 
  $\alpha: \sG \to \Iso(M)$ and let $\F \subseteq \X$ be a $(n-1)$-dimensional geodesic submanifold such that
  $\X \setminus \F$ has two connected components. 
  
  \item \label{itm:Model.3} \textbf{Tree model.}
  $\X = T$ being a tree on which $\sG$ acts. Choose $x_0 \in T$ to be a vertex, that we will henceforth call the \emph{root}. Then, $\X \setminus \{ x_0 \}$ is made up of $r$ connected components, with $r$ being the valence of $x_0$, that we can arrange into two families $\X_+$ and $\X_-$. This is an instance of the model described in Theorem \ref{thm:TheoremB} above.
\end{enumerate} 

It is very interesting to point out that, in many examples, the same idempotent Fourier multiplier on a group can be obtained from more than one of the three different models above. Here we will illustrate that phenomenon with the continuous groups $\PSL_2(\RR)$ and $\PSL_2(\CC)$, which will explain part of our original motivation.
Let $\SL_2(\KK)$ be the group of $2 \times 2$ matrices of determinant $1$ with entries over a field $\KK$ that in our examples will be $\RR$ or $\CC$. $\PSL_2(\KK)$ will denote the quotient of $\SL_2(\KK)$ by scalar matrices $\{\pm \Id\}$. Both groups, $\PSL_2(\RR)$ and $\PSL_2(\CC)$, act faithfully and transitively by isometries on the real hyperbolic spaces of dimension $2$ and $3$, $\PSL_2(\RR) \acts \HH^2$ and $\PSL_2(\CC) \acts \HH^3$ ---which we will identify with their upper half plane and upper half space models. 
Let us denote the coordinates of $\HH^2$ by $(x,y)$ and the coordinates of $\HH^3$ by $(x,y,z)$. We can take the geodesic $\{x = 0\} \subseteq \HH^2$ as separating space in the first example. A calculation yields that the Hilbert transform in the sense of the \hyperref[itm:Model.2]{manifold model} is
\begin{nsecequation}
  \label{eq:HilbertContSLR}
  m
  \Big(
  \underbrace{
    \begin{matrix}
      a_{11} & a_{12} \\
      a_{21} & a_{22}
    \end{matrix}
  }_{g}
  \Big)
  \, = \, 
  \sgn \big( \Re \{ g \cdot i \} \big) 
  \, = \, 
  \sgn \big( a_{11} a_{21} + a_{12}a_{22} \big),
\end{nsecequation}
where $g$ is the class $\pm[a_{i, j}]_{i,j}$. This multiplier can be related to the other two models. Indeed, for the \hyperref[itm:Model.1]{Hilbert space model}, it is possible to construct a metrically proper $1$-cocycle $\beta: \PSL_2(\KK) \to \H$ into an infinite dimensional Hilbert space $\H$ and choose a unit vector $u \in \H$ such that $m(g) = \sgn \langle \beta(g), u \rangle$, see \cite{ErvenFal1981, CheCowJoJulgVal2001}. While the group $\PSL_2(\RR)$ is continuous, and thus it is unable to act on trees in an interesting way, the \hyperref[itm:Model.3]{tree model} interpretation is indeed available for the restriction of \eqref{eq:HilbertContSLR} to $\PSL_2(\ZZ)$. The key observation is that
\[
  \PSL_2(\ZZ) \cong \ZZ_2 \ast \ZZ_3.
\]
This free-product decomposition yields an action of $\PSL_2(\ZZ)$ on its Bass-Serre tree with respect to which the multiplier $m{|}_{\PSL_2(\ZZ)}$ can be recovered, see Figure \ref{fig:SL2Tessel}.

For the complex case, the separating subspace is given by the $2$-dimensional geodesic submanifold $\{ x = 0 \} \subseteq \HH^2$, which readily gives that
\begin{nsecequation}
  \label{eq:HilbertContSLC}
  m
  \Big(
  \underbrace{
    \begin{matrix}
      z_{11} & z_{12} \\
      z_{21} & z_{22}
    \end{matrix}
  }_{g}
  \Big)
  \, = \,
  \sgn \big( \Re \{ p( g \cdot x_0 ) \} \big)
  \, = \, 
  \sgn \big( \Re \{ z_{11} \overline{z}_{21} + z_{12} \overline{z}_{22} \} \big),
\end{nsecequation}
where $x_0 = (0,1) \in \CC \times \RR_+$, $p(z,r) = z$ and $g = \pm [z_{i, j}]_{i,j}$. In this case, the relationship with the other two models is more involved. Nevertheless, it is still possible to describe the multiplier \eqref{eq:HilbertContSLC} above in terms of proper infinite-dimensional $1$-cocycles with respect to a natural direction $u$. 
For the \hyperref[itm:Model.3]{tree model} the situation is a lot more contentious. Indeed, let $\O_{-d}$ be the ring of integers of the algebraic field $\QQ(\sqrt{-d})$, where $d$ is a square-free integer. The lattices $\PSL_2(\O_{-d}) \subseteq \PSL_2(\CC)$ are the \emph{Bianchi groups}. It is known that all of them except for $d = 3$ admit nontrivial actions on trees, see \cite{FrohmanFine1988Bianchi}. Indeed, for $d=1$ this yield the following, quite involved, isomorphism
\[
  \PSL_2(\O_{-1})  \, \cong \, 
  \big( S_3 \, {\ast}_{\, \ZZ_3} \, A_4 \big)
  \ast_{\PSL_2(\ZZ)}
  \big( S_3 \, \ast_{\, \ZZ_2} \, V \big),
\]
where $S_n$ are the permutation groups, $A_n$ are the alternating groups and $V$ is the Klein $4$ group, see \cite[Theorem 2.1.(i)]{FrohmanFine1988Bianchi}. It is possible that $m$, when restricted to $\PSL_2(\O_{-d})$, may have an expression in terms of a nontrivial action on a tree. Nevertheless, the complexity of the amalgamated free product decompositions obtained make it a difficult approach to work with. On the other hand, the strength of our characterization in Theorem \ref{thm:TheoremA} allows us to prove the boundedness of $m{|}_{\PSL_2(\O_{-d})}$ directly. We have also verified that $d = 1$ is the only Bianchi group for which the restriction of \eqref{eq:HilbertContSLC} satisfies \eqref{eq:CotlarFactor}.

\begin{ltheorem}
  \label{thm:BoundednessBianchi}
  Let $\sG = \PSL_2(\O_{-1}) \subseteq \PSL_2(\CC)$ and $m: \PSL_2(\O_{-1}) \to \CC$ be the function given by
  \[
    m 
    \begin{pmatrix}
      a_{11} + i b_{11} & a_{12} + i b_{12} \\
      a_{21} + i b_{21} & a_{22} + i b_{22} 
    \end{pmatrix}
    \, = \,
    \sgn \big( a_{11} a_{21} + b_{11} b_{21} + a_{12} a_{22} + b_{12} b_{22} \big).
  \]
  Then, $m$ satisfies that 
  \[
    \big\| T_m: L_p(\VN \, \PSL_2(\O_{-1})) \to L_p(\VN \, \PSL_2(\O_{-1})) \big\| 
    \, \lesssim \, \Big( \frac{p^2}{p - 1} \Big)^\beta
    \quad \mbox{ with } \beta = \log_2(1+\sqrt{2}).
  \]
\end{ltheorem}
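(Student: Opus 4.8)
The plan is to check the factorisation \eqref{eq:CotlarFactor} by a direct computation, using the realisation of $m$ through the action $\PSL_2(\CC)\acts\HH^3$. First I would fix the quaternionic model $\HH^3\cong\CC\times\RR_{>0}$, $\zeta = z + rj$, on which $g = \left(\begin{smallmatrix} a & b \\ c & d\end{smallmatrix}\right)$ acts by $\zeta\mapsto(a\zeta+b)(c\zeta+d)^{-1}$. A short quaternion computation shows that the complex coordinate of $g\cdot x_0$ with $x_0 = (0,1)$ is $(a\bar c + b\bar d)/(|c|^2 + |d|^2)$, so $m(g) = \sgn\Re(a\bar c + b\bar d)$ — which, in real coordinates, is exactly the stated symbol — and applying the same formula to $g^{-1} = \left(\begin{smallmatrix} d & -b \\ -c & a\end{smallmatrix}\right)$ gives $m(g^{-1}) = -\sgn\Re(a\bar b + c\bar d)$. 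Thus $m$ is the manifold-model Hilbert transform attached to the totally geodesic wall $W = \{\Re z = 0\}$, recording the side of $W$ on which $g\cdot x_0$ lies; note that $x_0\in W$. I would also pin down $\sG_0 := \st_{x_0}\cap\sG$: solving $gg^\ast = \Id$ in $\SL_2(\O_{-1})$ shows it is the Klein four-group generated by the classes of $\left(\begin{smallmatrix} i & 0 \\ 0 & -i\end{smallmatrix}\right)$ and $\left(\begin{smallmatrix} 0 & 1 \\ -1 & 0\end{smallmatrix}\right)$, and a direct check gives $m(g_0 h) = \chi(g_0)\,m(h)$ for the character $\chi : \sG_0\to\{\pm1\}$ that equals $-1$ on those two generators; up to this sign twist, which is immaterial for $L_p$-boundedness, $T_m$ is in the setting of Theorem \ref{thm:TheoremA}.

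So it remains to prove \eqref{eq:CotlarFactor} for $g\in\sG\setminus\sG_0$ and $h\in\sG$, i.e. that $m(gh)\ne m(g)$ forces $m(g^{-1}) = m(h)$; geometrically, if $g\cdot x_0$ and $gh\cdot x_0$ lie strictly on opposite sides of $W$ (or one of them on $W$), then $g^{-1}\cdot x_0$ and $h\cdot x_0$ lie on the same closed side of $W$. This is exactly the conclusion the geodesic argument behind Theorem \ref{thm:TheoremB} would furnish if $\HH^3$ were uniquely arcwise connected; since it is not, the statement must come out of arithmetic, and this is where $\O_{-1} = \ZZ[\sqrt{-1}]$ is used.

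Concretely, I would write $g = \left(\begin{smallmatrix} a & b \\ c & d\end{smallmatrix}\right)$, $h = \left(\begin{smallmatrix} p & q \\ s & t\end{smallmatrix}\right)$ with Gaussian-integer entries and set $\mu = p\bar s + q\bar t$, $P = |p|^2 + |q|^2$, $S = |s|^2 + |t|^2$, so that $PS - |\mu|^2 = 1$ with $P,S\ge1$. From $(gh)(gh)^\ast = g(hh^\ast)g^\ast$ one obtains an explicit expression for $\Re\big[((gh)(gh)^\ast)_{12}\big]$ as an integer-linear combination of $\Re(a\bar c)$, $\Re(b\bar d)$, $\Re\mu$, $\Im\mu$ with coefficients built from $P$, $S$ and the entries of $g$ — every quantity being an integer because $\Re(w_1\bar w_2)\in\ZZ$ for $w_1,w_2\in\ZZ[\sqrt{-1}]$. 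The implication $m(gh)\ne m(g)\Rightarrow m(g^{-1}) = m(h)$ then becomes a finite arithmetic statement about these integers, which I would settle by a case analysis over the signs in $\{-1,0,+1\}$ of $\Re(a\bar c + b\bar d)$, $\Re[((gh)(gh)^\ast)_{12}]$, $\Re\mu$ and $\Re(a\bar b + c\bar d)$, using $\det g = 1$, $PS = 1 + |\mu|^2$, and $|\mu|\ge1$ whenever $\mu\ne0$. The hard part is precisely this case analysis: one must exclude the dangerous configuration in which $W$ strictly separates $g\cdot x_0$ from $gh\cdot x_0$ yet $m(g^{-1})\ne m(h)$, and it is the integrality of $\Re(w_1\bar w_2)$ on $\ZZ[\sqrt{-1}]$ — combined with $\det g = 1$ — that forces the cross-term $\Re(a\bar d + b\bar c)\,\Re\mu - \Im(a\bar d - b\bar c)\,\Im\mu$ to point the compatible way; over $\O_{-d}$ with $d\ne1$ this integrality is lost, matching the claim that $d=1$ is the only Bianchi case in which \eqref{eq:HilbertContSLC} restricts to a Cotlar symbol. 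Granting \eqref{eq:CotlarFactor}, the bound with $\beta = \log_2(1+\sqrt{2})$ is then immediate from Theorem \ref{thm:TheoremA}.
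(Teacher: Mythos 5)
Your plan founders on the choice of the subgroup $\sG_0$. You take $\sG_0=\st_{x_0}\cap\sG$, the Klein four-group (and your character-twisted left invariance $m(g_0h)=\chi(g_0)m(h)$ is indeed correct), and you then aim to verify \eqref{eq:CotlarFactor} for all $g\in\sG\setminus\sG_0$, $h\in\sG$ by an arithmetic case analysis. But that identity is \emph{false} for this $m$ and this $\sG_0$: the zero set of $m$ is not the point stabilizer but the much larger subgroup $\Gamma_0=\Gamma_0^+\cup\Gamma_0^-$, where $\Gamma_0^+$ consists of the classes of matrices $\bigl(\begin{smallmatrix} a_1 & ib_2\\ ic_2 & d_1\end{smallmatrix}\bigr)$ (a conjugate of $\PSL_2(\ZZ)$) and $\Gamma_0^-$ of $\bigl(\begin{smallmatrix} ia_2 & b_1\\ c_1 & id_2\end{smallmatrix}\bigr)$. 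Elements of $\Gamma_0\setminus\sG_0$ destroy the factorization. Concretely, take
\[
  g=\begin{pmatrix} i & 1\\ -1 & 0\end{pmatrix}\in\Gamma_0^-\setminus\sG_0,
  \qquad
  h=\begin{pmatrix} 1 & 1\\ 0 & 1\end{pmatrix},
  \qquad
  gh=\begin{pmatrix} i & 1+i\\ -1 & -1\end{pmatrix}.
\]
Then $m(g)=m(g^{-1})=0$, $m(h)=1$, and $m(gh)=\sgn\bigl(\Re\{i\cdot\overline{(-1)}+(1+i)\cdot\overline{(-1)}\}\bigr)=-1$, so $\bigl(m(g^{-1})-m(h)\bigr)\bigl(m(gh)-m(g)\bigr)=(-1)(-1)=1\neq 0$. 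The same pair also violates the identity relative to $\{e\}$ and even relative to $\Gamma_0^+$ (since $g\notin\Gamma_0^+$). Geometrically this is exactly the failure you anticipate when noting that $\HH^3$ is not a UAC space: the wall $W=\{\Re z=0\}$ contains the whole $\Gamma_0$-orbit of $x_0$, not just its four stabilizer translates, and the side-of-$W$ dichotomy carries no information when $g\cdot x_0\in W$. So no case analysis, however clever with the integrality of $\Re(w_1\bar w_2)$ on $\ZZ[i]$, can close the argument as you set it up.

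The paper's proof avoids this by changing both the symbol and the subgroup. It first shows that the vanishing set $\Gamma_0$ is a subgroup (isomorphic to $\PSL_2(\ZZ)\rtimes\ZZ_2$, with $\Gamma_0^+=J\,\PSL_2(\ZZ)\,J^{-1}$), then replaces $m$ by the modified symbol $m_2$ which agrees with $m$ outside $\Gamma_0$ but equals $1$ on $\Gamma_0^-$ and $0$ on $\Gamma_0^+$. One proves that $m_2$ is genuinely left and right $\Gamma_0^+$-invariant and satisfies \eqref{eq:CotlarFactor} relative to the \emph{infinite} subgroup $\Gamma_0^+$; this is where the Gaussian-integer arithmetic you gesture at actually enters, in the form of the inequalities $0\le(a_1d_1+b_2c_2)(b_1c_1+a_2d_2)\le\Re(a\bar c)\,\Re(b\bar d)$ and the nonpositive discriminant of a quadratic in $\Im t$ in the $KAN$ coordinates you wrote down. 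Finally, $m=m_2-\1_{\Gamma_0}+\1_{\Gamma_0^+}$, and since indicator functions of open subgroups are bounded idempotent multipliers, $\|T_m\|_{p\to p}$ is controlled by $\|T_{m_2}\|_{p\to p}$, which yields the stated bound via Theorem \ref{thm:TheoremA}. Your symbol computation on $\HH^3$ and the formula for $m(g^{-1})$ are fine, but without the passage to $m_2$ and to the subgroup $\Gamma_0^+$ the statement you propose to prove is simply not true.
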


This leaves open whether $m{|}_{\Gamma}$ is bounded for lattices other than $\PSL_2(\O_{-1})$. 
In the same spirit, the boundedness of \eqref{eq:HilbertContSLR} and \eqref{eq:HilbertContSLC} over the whole group is an natural problem that we leave open. 

\begin{lproblem}\ \normalfont
  \label{prb:LpBoundContSL2}
  \begin{enumerate}[label={\rm \textbf{(A.\arabic*)}}, ref={\rm \textbf{(A.\arabic*)}}, leftmargin=1.5cm]
    \item \label{itm:LpBoundContSLR.1} Let $m$ be as \eqref{eq:HilbertContSLR}. Is 
    $T_m: L_p(\VN \, \PSL_2 (\RR)) \to L_p(\VN \, \PSL_2 (\RR))$ bounded?
    \item \label{itm:LpBoundContSLR.2} Let $m$ be as \eqref{eq:HilbertContSLC}. Is 
    $T_m: L_p(\VN \, \PSL_2 (\CC)) \to L_p(\VN \, \PSL_2 (\CC))$ bounded?
  \end{enumerate}
\end{lproblem}

In the classical case of $\sG = \RR$, the boundedness of the Hilbert transform can be obtained from smooth multiplier results, in particular it satisfies the hypothesis of both the H\"ormander-Mikhlin and Marcinkiewicz theorems \cite{Duoan2001Book}. Therefore, Problem \ref{prb:LpBoundContSL2} seems closely related to the question of whether smoothness conditions of a function $\widetilde{m}: \HH^2 \to \CC$ yield $L_p$-boundedness of the lifted multiplier $m(g) = \widetilde{m}(g \cdot i)$. 
Results that point in that direction have already appeared in the literature. For instance, a result for local smooth Fourier multipliers in $\SL_2(\RR)$ features in \cite{ParcetSLn2018}. In the case of $S_p$-bounded Schur multipliers, results for global Hörmander-Mikhlin Schur multipliers have been obtained in \cite{CondeGonParcetTablate2022I,CondeGonParcetTablate2022II}. This smooth multiplier approach to Problem \ref{prb:LpBoundContSL2} above presents two main obstacles. The first is that ---contrary to the results in \cite{CondeGonParcetTablate2022II}--- the singularity isn't located in a single point, instead it is a codimension $1$ subset containing the stabilizer of a point in $\HH^2$. The second is that currently available techniques for the passage from Schur to Fourier multipliers require to work with compactly supported multipliers or in amenable groups, see \cite{ParcetSLn2018, CondeGonParcetTablate2022II}. 

\textbf{Foreword.} Since the release of the first draft of this work, Problem \ref{prb:LpBoundContSL2} has been solved in the negative by a surprising generalization Feffermann's Kakeya construction for idempotent Schur multipliers \cite{Parcet2024idempotentschur}. 

\section{Cotlar identities and multipliers} \label{S1}

\textbf{Noncommutative integration.} Throughout this text we will use liberally \emph{noncommutative integration theory} and the theory of noncommutative $L_p$-spaces. Let $\M \subseteq \B(\H)$ be a \emph{von neumann algebra}  admitting a \emph{normal semifinite and faithful tracial weight} $\tau: \M_+ \to [0,\infty]$ that we will henceforth just refer to as a \emph{n.s.f trace}. It is possible to construct the noncommutative $L_p$-spaces associated to $(\M,\tau)$ as the subset of $\tau$-measurable operators $L_p(\M,\tau) \subseteq L_0(\M,\tau)$ satisfying that
\[
  f \in L_p(\M,\tau) \quad \equivalent \quad \| f \|_p := \tau \big( |f|^p \big)^\frac1{p} < \infty.
\]
This theory, which goes back all the way to Dixmier and Segal \cite{Dixmier1953Formes, Segal1953}, is already well understood and the interested reader can consult it in \cite{Terp1981lp, PiXu2003, GoldsteinLabouschange2020}.

Let $\sG$ be a locally compact group that we will throughout the text assume to be second countable, and let $L_2(\sG)$ be its $L_2$-space with respect to the \emph{left Haar measure} $\mu$ \cite{Foll1995}. As usual, we will denote by $\lambda: \sG \to \U(L_2 (\sG))$ the \emph{left regular representation}, which is the unitary representation $g \mapsto \lambda_g$ that acts by sending $\xi(h)$ to $\xi(g^{-1} h)$. The \emph{left regular von Neumann algebra} $\VN \sG \subseteq \B(L_2 (\sG))$ of $\sG$ is given by
\[
  \overline{ \lambda[L_1(\sG)]^{\wast}} 
  \, = \,
  \overline{\Big\{ \lambda(\varphi) := \int_\sG \varphi(g) \, \lambda_g \, d \mu(g) : \varphi \in L_1(\sG) \Big\}^\wast} 
  \, \subseteq \, 
  \B(L_2( \sG)).
\]
This von Neumann algebra admits a normal, semifinite and faithful weight $\tau: \VN \sG_+ \to [0,\infty]$ that satisfies the Plancherel identity, meaning that $\varphi \mapsto \lambda(\varphi)$ extends to a unitary isometry from $L_2(\sG)$ to Gelfand-Neumark-Segal space $L_2(\VN \sG;\tau)$. This weight is usually referred as the \emph{Plancherel weight} \cite[Chapter 7]{Ped1979}. The weight $\tau$ is a n.s.f trace precisely when $\sG$ is unimodular. Thus, we will work in the natural setting of unimodular groups and refer to $\tau$ as the \emph{Plancherel trace}. In this context, the Plancherel trace is given by
\[
  \tau \left( \int_\sG \varphi(g) \, \lambda_g \, d \mu(g) \right) = \varphi(e), 
  \quad \mbox{ for every } \, \varphi \in C_c(\sG) \ast C_c(\sG).
\]
We will denote the noncommutative $L_p$-spaces associated to $\tau$ simply by $L_p(\VN \sG)$. By analogy with the classical Fourier transform, we will denote by $\widehat{f}(g)$ the value $\tau(\lambda_g^\ast f)$, which is well defined whenever $f \in L_1(\VN \sG)$. It is also worth noticing that, by Plancherel's theorem, the map $f \mapsto \widehat{f}$ is well defined for any $f \in L_2(\VN \sG)$. In fact, we have that $f \in L_2(\VN \sG)$ can be expressed as
\begin{equation}
    \label{eq:Plancherel}
    f = \lambda(\widehat{f}) := \int_\sG \widehat{f}(g) \, \lambda_g \, d \mu(g),
\end{equation}
where the integral on the right hand side converges in the $L_2$ norm.

\textbf{Conditional expectations.} Let $\N \subseteq \M$ be a von Neumann subalgebra of $\M$, that is a $\ast$-subalgebra that is also ultraweakly closed. If $\tau$ is a n.s.f. trace over $\M$ and $\tau{|}_\N$ is still semifinite, then it is easy to see that the inclusion $\iota: L_1(\N) \into L_1(\M)$ is isometric and its dual map is a normal (i.e: ultraweakly continuous) \emph{conditional expectation} $\EE: \M \to \N \subseteq \M$. By conditional expectation we mean a \emph{unital and completely positive} map such that $\EE{|}_\N = \Id_\N$. Recall that conditional expectations satisfy that  $\EE \circ \EE = \EE$ and that, by Tomiyama's Theorem, see \cite[Theorem 1.5.10]{BroO2008}, $\EE$ is automatically $\N$-bimodular.

Let $\sG_0 \subseteq \sG$ be two groups such that $\sG_0$ is open inside $\sG$. Then $\sG_0$ is unimodular if $\sG$ is. Furthermore, the Plancherel trace $\tau_{\sG_0}$ of $\VN \sG_0$ coincides with the Plancherel trace of $\VN \sG$ restricted to $\VN \sG_0$. Therefore there is a normal and trace-preserving conditional expectation $\EE: \VN \sG \to \VN \sG_0 \subseteq \VN \sG$ that is given by the Fourier multiplier associated to $\1_{\sG_0}$ ie:
\[
  \EE(f)
  \, = \,
  \EE \left( \int_\sG \widehat{f}(g) \, \lambda_g \, d \mu(g) \right) 
  \, = \,
  \int_\sG \1_{\sG_0}(g) \, \widehat{f}(g) \, \lambda_g \, d \mu(g).
\]
The fact that $\EE$ is trace preserving allows us to extend $\EE$ as a contraction to all the $L_p$-spaces $1\leq p \leq \infty$, $\EE:L_p(\VN \sG) \to L_p(\VN \sG_0) \subseteq L_p(\VN \sG)$. 

\textbf{Noncommutative Cotlar identities.} Most of this section up until the closed-formula characterization of the Cotlar identity in the proof of Theorem \ref{thm:TheoremA} follows closely the results obtained by Mei and Ricard and it is included here for the sake of {completeness}.


\begin{definition}[{\cite[from Proposition 3.2(iv)]{MeiRicard2017FreeHilb}}] \normalfont
  \label{def:CotlarNC}
  Let $\M$, $\N$ be as above, $\EE: \M \to \N$ be the conditional expectation and 
  let $H:L_2(\M) \to L_2(\M)$ be a bounded operator
  \begin{enumerate}[label={\rm \textbf{(\roman*)}}, ref={\rm {(\roman*)}}, leftmargin=1.5cm]
    \item $H$ satisfies the (non-relative) Cotlar identity if
    \begin{equation}
      \tag{Cotlar$_\mathrm{nr}$}
      \label{eq:CotlarNoExp}
      H(f) \, H(f)^\ast
      \, = \, 
      H \big( f \, H(f)^\ast \big) + H \big( f \, H(f)^\ast \big)^\ast - H \big( H(f f^\ast) \big)^\ast, 
    \end{equation}
    for every $f \in \M \cap L_2(\M)$.
    \item $H$ is said to satisfy Cotlar identity (relative to $\N$) if
      \begin{equation}
        \tag{Cotlar$_{\EE^\perp}$}
        \label{eq:CotlarNC}
        \EE^\perp \big[ H(f) \, H(f)^\ast \big] 
        \, = \, 
        \EE^\perp \left[ H \big( f \, H(f)^\ast \big) + H \big( f \, H(f)^\ast \big)^\ast  - H \big( H(f f^\ast)^\ast \big) \right],
      \end{equation}
      for every $f \in L_2(\M) \cap \M$, where $\EE^\perp = (\Id - \EE)$.
  \end{enumerate}
\end{definition}

We are assuming that $H$ is bounded in $L_2(\M)$ \textit{a priori}, this is not a restrictive imposition since it holds trivially in the case of Fourier multipliers. Observe as well that that, although both Cotlar identities require applying $H$ to products of functions that therefore may not be in $L_2(\M)$, the fact that $f \in L_2(\M) \cap \M$ and that $L_2(\M)$ is stable by multiplications by $\M$ makes both equations meaningful. 

We will need the following lemma. Notice that if $H: L_2(\M) \to L_2(\M)$ is left $\N$-modular, then its restriction to $L_2(\N)$ composed with the expectation $\EE:L_2(\M) \to L_2(\N)$ gives a map
\[
  \EE \, H \, {|}_{L_2(\N)}:L_2(\N) \to L_2(\N)
\]
that is both bounded with norm equal to that of $H$ and and left $\N$-modular. But all left $\N$-modular maps in $L_2(\N)$ are right multiplicators by an element in $\N$ of norm equal to the operator norm. As such, $\EE \, H \, {|}_{L_2(\N) \cap L_p(N)}$ extends to a bounded operator in $L_p(\N)$ and
\begin{equation}
    \label{eq:ModularExtenstion}
    \big\| \EE \, H \, {|}_{L_p(\N)}:L_p(\N) \to L_p(\N) \big\| \leq \big\| H: L_2(\M) \to L_2(\M) \big\|.
\end{equation}
With this observation at hand we can proceed to prove the following lemma.

\begin{lemma}[{\cite[Proposition 3.4]{MeiRicard2017FreeHilb}}]
  \label{lem:ExpectationBdd}
  Let $\M$, $\N$ and $\EE$ be as above and let $H:L_2(\M) \to L_2(\M)$ be a bounded and left $\N$-modular map. For every $f \in L_2(\M) \cap \M$, it holds that
  \begin{equation}
    \label{eq:ExpectationBdd.1}
    \EE \big[ H(f) H(f)^\ast \big]
      \leq \left\| H: L_2(\M) \to L_2(\M) \right\|^2 \EE \big[ f f^\ast \big].
  \end{equation}
  Furthermore, if $\EE H = H \EE$, we have that for every $1 \leq p \leq \infty$
  \begin{eqnarray}
    \big\| \EE \big[ H \big(f \, H(f)^\ast \big) \big] \big\|_p
      & \leq & \big\| H: L_2(\M) \to L_2(\M) \big\|^2
               \, \left\| \EE \big[ f f^\ast \big] \right\|_p \label{eq:ExpectationBdd.2} \\
    \big\| \EE \left[ H \big( H(f f^\ast)^\ast \big) \right] \big\|_p
      & \leq & \big\| H: L_2(\M) \to L_2(\M) \big\|^2 \, \left\| \EE \big[ f f^\ast \big] \right\|_p \label{eq:ExpectationBdd.4}
  \end{eqnarray}
\end{lemma}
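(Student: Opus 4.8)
The plan is to route everything through the $\N$-valued form $\langle a,b\rangle := \EE[a b^{\ast}]$ that the conditional expectation places on $\A$, using only two elementary facts: the $\N$-bimodularity of $\EE$ (Tomiyama) and the trace invariance $\tau\circ\EE=\tau$. One extra observation does most of the work for the last two estimates: if $\EE H=H\EE$, then evaluating at $\1$ gives $H(\1)=H(\EE\1)=\EE H(\1)\in\N$, so $\|H(\1)\|_{\infty}=\|\EE H(\1)\|_{\infty}$ and, by left $\N$-modularity, $H(n)=n\,H(\1)$ for every $n\in\A\cap\N$; more generally $\EE[H(x)]=H(\EE x)=(\EE x)\,H(\1)$ for $x\in\A$.

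For \eqref{eq:ExpectationBdd.1} I would reduce the operator inequality to a family of scalar ones. Both sides are bounded elements of $\N$ and their difference is bounded self-adjoint, so it suffices to check
\[
  \tau\big(c^{\ast}\,\EE[H(f)H(f)^{\ast}]\,c\big)\;\le\;\big\|H\colon L_2(\M)\to L_2(\M)\big\|^{2}\,\tau\big(c^{\ast}\,\EE[ff^{\ast}]\,c\big)
\]
for $c$ ranging over the $L_2(\N)$-dense subalgebra $\A\cap\N$. Using bimodularity and trace invariance to pull $c$ inside and strip $\EE$, then left modularity of $H$, the left-hand side equals $\tau\big(c^{\ast}H(f)H(f)^{\ast}c\big)=\|c^{\ast}H(f)\|_{2}^{2}=\|H(c^{\ast}f)\|_{2}^{2}$, which is at most $\|H\|_{2\to2}^{2}\,\|c^{\ast}f\|_{2}^{2}=\|H\|_{2\to2}^{2}\,\tau\big(c^{\ast}\EE[ff^{\ast}]c\big)$ by the same manipulations run backwards; one only notes $c^{\ast}f\in\A\cap L_2(\M)$, hence in the domain of $H$.

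For \eqref{eq:ExpectationBdd.2} and \eqref{eq:ExpectationBdd.4} I would combine the observation above with the operator Cauchy--Schwarz inequality. Taking $x=fH(f)^{\ast}\in\A$ gives $\EE[H(fH(f)^{\ast})]=\EE[fH(f)^{\ast}]\,H(\1)$, so by H\"older $\|\EE[H(fH(f)^{\ast})]\|_{p}\le\|\EE[fH(f)^{\ast}]\|_{p}\,\|\EE H(\1)\|_{\infty}$. To bound $\|\EE[fH(f)^{\ast}]\|_{p}$, apply the positive map $\EE\otimes\Id_{M_{2}}$ to the positive matrix $\big(\begin{smallmatrix}f\\ H(f)\end{smallmatrix}\big)\big(\begin{smallmatrix}f\\ H(f)\end{smallmatrix}\big)^{\ast}$: its off-diagonal entry then factors as $\EE[fH(f)^{\ast}]=\EE[ff^{\ast}]^{1/2}K\,\EE[H(f)H(f)^{\ast}]^{1/2}$ with $\|K\|_{\infty}\le1$, and H\"older together with \eqref{eq:ExpectationBdd.1} yields $\|\EE[fH(f)^{\ast}]\|_{p}\le\|\EE[ff^{\ast}]\|_{p}^{1/2}\|\EE[H(f)H(f)^{\ast}]\|_{p}^{1/2}\le\|H\|_{2\to2}\|\EE[ff^{\ast}]\|_{p}$, which is \eqref{eq:ExpectationBdd.2}. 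For \eqref{eq:ExpectationBdd.4} the same mechanism applied twice gives $\EE[H(H(ff^{\ast})^{\ast})]=H\big(\EE[H(ff^{\ast})]^{\ast}\big)=H\big(H(\1)^{\ast}\EE[ff^{\ast}]\big)=H(\1)^{\ast}\EE[ff^{\ast}]H(\1)$, the last step since $H(\1)^{\ast}\EE[ff^{\ast}]\in\N$ and $H$ is left $\N$-modular, and H\"older closes it.

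Everything here is routine once the module picture is set up; the one point that needs genuine care is the passage from the scalar inequalities to the operator inequality \eqref{eq:ExpectationBdd.1} — verifying that $\A\cap\N$ is $L_2(\N)$-dense and that all intermediate elements ($c^{\ast}f$, $fH(f)^{\ast}$, $H(ff^{\ast})^{\ast}$, $H(\1)^{\ast}\EE[ff^{\ast}]$) actually lie in $\A$, so that $H$, $\EE$ and $\tau$ are legitimately applied. I expect this bookkeeping, rather than any real inequality, to be the main obstacle.
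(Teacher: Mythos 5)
Your proof is correct and follows essentially the paper's route: the operator inequality \eqref{eq:ExpectationBdd.1} is obtained by testing against $\N$ and pulling the $\N$-element through $H$ via left modularity (the paper tests against state densities $\delta^{1/2}$, $\delta\in L_1(\N)_+$, you test against $c\in\A\cap\N$ dense in $L_2(\N)$ --- the same computation), and \eqref{eq:ExpectationBdd.2}, \eqref{eq:ExpectationBdd.4} follow, as in the paper, from the fact that $\EE H=\EE H\EE$ acts as right multiplication by $\EE H(\1)$ together with the Cauchy--Schwarz/H\"older inequality for conditional expectations, which the paper cites from \cite[Inequality (2.1)]{Jun2002Doob} and you reprove via the $2\times 2$ matrix factorization. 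The differences are cosmetic, and your closing remark about domain bookkeeping ($\EE(\A)\subset\A$, $\1\in\A$, etc.) is exactly the same implicit care the paper takes.
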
 
\begin{proof}
  All of the points are elementary. For \eqref{eq:ExpectationBdd.1} first notice that every 
  state of $\N$ is of the form $f \mapsto \tau( \delta \, f)$, where $\delta$ 
  is a positive element of norm $1$ in the space $L_1(\N)$. Decomposing it as $\delta = \delta^\frac12 \delta^\frac12$ gives 
  \begin{eqnarray*}
    \tau \left\{ \delta \, \EE \big[ H(f) H(f)^\ast \big] \right\}
      & = & \tau \left\{ \delta^\frac12 \EE \big[ H(f) H(f)^\ast \big] \delta^\frac12 \right\} \\
      & = & \tau \left\{ \EE \big[ \delta^\frac12 H(f) H(f)^\ast \delta^\frac12 \big] \right\} \\
      & = & \tau \left\{ \EE \big[ H(\delta^\frac12 f) H(\delta^\frac12 f)^\ast  \big] \right\} \\
      & = & \tau \left\{ H(\delta^\frac12 f) H(\delta^\frac12 f)^\ast \right\} \\
      & \leq & \big\| H:L_2(\M) \to L_2(\M) \big\|^2 
               \, \tau \left\{ (\delta^\frac12 f)(\delta^\frac12 f)^\ast \right\} \\
      & = & \big\| H:L_2(\M) \to L_2(\M) \big\|^2 \, 
            (\tau \circ \EE) \left\{ (\delta^\frac12 f)(\delta^\frac12 f)^\ast \right\} \\
      & = & \big\| H:L_2(\M) \to L_2(\M) \big\|^2 
            \, \tau \left\{ \delta \,  \EE \big[ f f^\ast \big] \right\}.
  \end{eqnarray*}
  Since this is true for every state, the operator inequality \eqref{eq:ExpectationBdd.1} holds.
  
  For \eqref{eq:ExpectationBdd.2} we use that $\EE H = H \EE$ to rewrite $\EE \left[ H \big(f \, H(f)^\ast \big) \right]$ as $(\EE H  \EE) \circ \EE \left[ \big(f \, H(f)^\ast \big) \right]$. 
  The operator norm on $\EE H \EE: L_p(\M) \to L_p(\M)$ is bounded by that of $\EE H {|}_{L_p(\N)}: L_p(\N) \to L_p(\N)$, which is bounded by the norm of $H$ by \eqref{eq:ModularExtenstion}.
  To estimate the term $\EE \left[ \big(f \, H(f)^\ast \big) \right]$ we will use the following version of Hölder's inequality \cite[Inequality (2.1)]{Jun2002Doob}
  \[
    \big\| \EE \big[ f \, g^\ast \big] \big\|_p 
    \leq 
    \big\| \EE \big[ f \, f^\ast \big]^\frac12 \big\|_r \, \big\| \EE \big[ g \, g^\ast \big]^\frac12 \big\|_s 
    \quad \mbox{ when } \quad \frac1{p} = \frac1{r} + \frac1{s},
  \]
  with $r = s = 2p$ and $g = H(f)$ to obtain that
  \[
    \big\| \EE \left[ f \, H(f)^\ast \right] \big\|_p
    \leq 
    \big\| \EE \left[ f \, f^\ast \right] \big\|_p^\frac12 \, \big\| \EE \left[ H(f) \, H(f)^\ast \right] \big\|_p^\frac12.
  \]
  Applying the inequality in \eqref{eq:ExpectationBdd.1} gives the result. Identity \eqref{eq:ExpectationBdd.4} follows immediately after using two times the fact that $H$ and $\EE$ commutes and that $\EE H \EE$ has a norm in $L_p$ bounded by the norm in $L_2$ of $H$ by \eqref{eq:ModularExtenstion}.
\end{proof}


We can now prove the following extrapolation result.

\begin{proposition}[{\cite[Theorem 3.5]{MeiRicard2017FreeHilb}}]
  \label{prp:LpExtrapolation}
  Let $\N \subseteq \M$ be as before and let $H:L_2(\M) \to L_2(\M)$ be a left $\N$-modular operator commuting with $\EE: \M \to \N$. If $H$ satisfies \eqref{eq:CotlarNC} then $\forall \, 2 \leq p < \infty$
  \begin{equation}
    \label{eq:Extrapolation}
    \big\| H: L_p(\M) \to L_p(\M) \big\|
    \, \lesssim \, 
    p^\beta \, \big\| H: L_2(\M) \to L_2(\M) \big\|, \; \text{ where } \beta = \log_2(1 + \sqrt{2}).
  \end{equation}
  Similarly, if $H:L_2(\M) \to L_2(\M)$ is a general bounded linear map that satisfies the non-relative Cotlar identity \eqref{eq:CotlarNoExp}, then the same extrapolation inequality \eqref{eq:Extrapolation}.
\end{proposition}

\begin{proof}
  First, let us denote the operator norm on $L_p$ of $H$ by $c_p := \big\| H: L_p(\M) \to L_p(\M)\big\|$. 
  We are going to proceed by induction, assuming that $c_p < \infty$ to prove that $c_{2p} < \infty$.
  Choose $f \in \M \cap L_2(\M)$ with $\| f \|_{2p} \leq 1$ and notice that
  \begin{eqnarray}
    \| H(f) \|_{2p}^2 
      & =    & \| H(f) H(f)^\ast \|_{p} \nonumber\\
      & \leq & \big\| \EE \big[ H(f) H(f)^\ast \big] \big\|_{p} 
               + \big\| \EE^\perp \big[ H(f) H(f)^\ast \big] \big\|_{p} \nonumber \\
      & \leq & c_2^2 \, \big\| \EE \big[ f f^\ast \big] \big\|_{p} 
               + \big\| \EE^\perp \big[ H \big(f \, H(f)^\ast \big) 
               + H \big(f \, H(f)^\ast \big)^\ast 
               - H \big( H(f f^\ast)^\ast \big) \big] \big\|_{p} \label{eq:UseofLemma1}\\
      & \leq & c_2^2 \, \| f \|_{2p}^2 
               + \big\| H \big(f \, H(f)^\ast \big) \big\|_p
               + \big\| H \big(f \, H(f)^\ast \big)^\ast \big\|_p
               + \big\| H \big( H(f f^\ast)^\ast \big) \big\|_{p} \nonumber \\
          &  & + \big\| \EE \big[ H \big(f \, H(f)^\ast \big) \big] \big\|_{p} 
               + \big\| \EE \big[ H \big(f \, H(f)^\ast \big)^\ast  \big] \big\|_{p} 
               + \big\| \EE \big[ H \big( H(f f^\ast)^\ast \big) \big] \big\|_{p}  \label{eq:UseofLemma2}\\
      & \leq & 2 \, c_p \, \| H(f) \|_{2p} \|f\|_{2p} + c_p^2 \| f \|_{2p} + 4 \, c_2^2 \| f \|_{2p}^2 \nonumber \\
      & \leq & 2 \, c_p \, \| H(f) \|_{2p} + c_p^2 + 4 \, c_2^2  \label{eq:UseofLemma3}
  \end{eqnarray}
We have used \eqref{eq:CotlarNC} in the second term of the sum of \eqref{eq:UseofLemma1} and estimate \eqref{eq:ExpectationBdd.1} of Lemma \ref{lem:ExpectationBdd} in the first.
  To pass from \eqref{eq:UseofLemma2} to \eqref{eq:UseofLemma3} we have used the other two identities of Lemma \ref{lem:ExpectationBdd}. Notice that \eqref{eq:UseofLemma3} is a quadratic inequality of the form $0 \leq -t^2 + 2 c_p \, t + c_p^2 + 4 \, c_2^2$, where $t = \| H(f)\|_{2p}$. Since the leading term in negative, this implies a bound of $\|H(f)\|_{2p}$ in terms of $c_p$ and $c_2$ only. Taking supremum over $f$ and using the norm density of $\M \cap L_{2}(\M)$ in $L_{2p}(\M)$ implies that $c_{2p}$ is finite. More explicitly, setting $a_p = c_p / c_2$ gives the recursive inequality
  \begin{equation}
    \label{eq:Recurrence1}
    a_{2p}^2 \leq 2 \, a_p \, a_{2p} + a_p^2 + 4.
  \end{equation}
  Adding $a_{2p}^2$ to both sides in order to complete squares gives
  \[
    2 a_{2p}^2 
    \, \leq \, a_{2p}^2 + 2 \, a_p \, a_{2p} + a_p^2 + 4
    \, = \, (a_{2p} + a_p)^2 + 4 
    \, \leq \, (a_{2p} + a_p + 2)^2.
  \]
  After taking square roots and recursively applying the inequality above, the following is obtained
  \[
    a_{2^k} \, \leq \, C  \, (1 + \sqrt{2})^{k-1}, 
    \quad \mbox{ with } \quad C \leq 3 + \sqrt{2}.
  \]
  This, together with Marcinkiewicz interpolation for intermediate values of $p$,
  gives the desired inequality. The non-relative case works similarly, but the $4$ extra terms that give $4 \, c_2^2$ in the right hand side of inequality \eqref{eq:UseofLemma3} do not appear, which only affects the absolute constant $C$ in the final inequality.
\end{proof}

Our source of examples for the extrapolation theorem above is taken when $\M = \VN \sG$ and $H=T_m$ is a Fourier multiplier. We will work with Cotlar identities relative to subalgebras induced by subgroups $\sG_0 \subseteq \sG$. In that setting it holds that there is a normal and trace-preserving conditional expectation $\EE: \VN \sG \to \VN \sG_0$ if and only if $\sG_0 \subseteq \sG$ is open. Thus, our relative Cotlar identities would be taken only with respect to open subgroups. In general, given a group $\sG$, when speaking about the Cotlar identity for a multiplier $T_m$ ---without specifying any subgroup--- we will mean, that it satisfies the non-relative Cotlar identity \eqref{eq:CotlarNoExp} when the group $\sG$ is continuous  i.e. $\mu(\{e\}) = 0$, while in the discrete case $\mu(\{e\}) \neq 0$, we will instead mean that it satisfies \eqref{eq:CotlarNC} with respect to the subgroup $\{e\}$.

We are now going to prove the equivalence between Cotlar's identity \eqref{eq:CotlarNC} and the closed formula in Theorem \ref{thm:TheoremA}.

\begin{theorem}
  \label{thm:ClosedFormulaCotlar}
  Let $\sG_0 \subseteq \sG$ be an open subgroup of $\sG$ and $m: \sG \to \CC$ be a bounded function. 
  The following properties are equivalent
  \begin{enumerate}[label={\rm \textbf{(\roman*)}}, ref={\rm {(\roman*)}}]
    \item \label{itm:ClosedFormulaCotlar.1}
    $T_m$ satisfies \eqref{eq:CotlarNC}.
    
    \item \label{itm:ClosedFormulaCotlar.2}
    The function $m$ satisfies that 
    $\; \big( m(g^{-1} ) - m(h) \big) \, \big( m(gh) - m(g) \big) \, = \, 0$,
    for almost every $g \in \sG \setminus \sG_0, h \in \sG$.
  \end{enumerate}
\end{theorem}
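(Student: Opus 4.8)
The plan is to compute both sides of the Cotlar identity \eqref{eq:CotlarNC} on monomials $f = \lambda_{g_1}$ and, more generally, on $f = \lambda_{g_1}$ with $f f^\ast$ being a multiple of $\lambda_e$, reducing the operator identity to a scalar identity indexed by pairs of group elements. Concretely, for $f = \lambda_a$ one has $H(f) = m(a)\lambda_a$, so $H(f)H(f)^\ast = |m(a)|^2 \lambda_e$; this lies in $\L\sG_0$, hence $\EE^\perp$ kills it, and the left-hand side is $0$. The right-hand side, evaluated on $f = \lambda_a$, involves $f H(f)^\ast = \overline{m(a)}\lambda_e$ (again killed by $\EE^\perp$ after applying $H$, since $H(\lambda_e) = m(e)\lambda_e \in \L\sG_0$), so testing against single monomials is \emph{not} enough to see the content of the identity. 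The right test function is a \emph{sum} of two monomials, $f = \lambda_a + \lambda_b$, or equivalently one should expand the identity bilinearly and track the off-diagonal terms $\lambda_a \lambda_b^\ast = \lambda_{a b^{-1}}$.

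The key computation: write $f = \sum_j c_j \lambda_{g_j}$ with finitely many terms (this suffices by density of $\A$). Then
\[
  H(f)H(f)^\ast \;=\; \sum_{j,k} c_j \overline{c_k}\, m(g_j)\overline{m(g_k)}\, \lambda_{g_j g_k^{-1}},
\]
while, using $f H(f)^\ast = \sum_{j,k} c_j\overline{c_k}\,\overline{m(g_k)}\,\lambda_{g_j g_k^{-1}}$,
\[
  H\!\big(f H(f)^\ast\big) \;=\; \sum_{j,k} c_j\overline{c_k}\,\overline{m(g_k)}\, m(g_j g_k^{-1})\, \lambda_{g_j g_k^{-1}},
\]
and similarly $H(ff^\ast) = \sum_{j,k} c_j\overline{c_k}\, m(g_j g_k^{-1})\lambda_{g_j g_k^{-1}}$, so
\[
  H\!\big(H(ff^\ast)^\ast\big) \;=\; \sum_{j,k} c_j\overline{c_k}\, \overline{m(g_j g_k^{-1})}\, m(g_k g_j^{-1})\,\lambda_{g_k g_j^{-1}}.
\]
Re-indexing the last sum (swap $j \leftrightarrow k$) so that every term carries $\lambda_{g_j g_k^{-1}}$, and collecting the coefficient of a fixed group element $s = g_j g_k^{-1}$ on both sides, the identity \eqref{eq:CotlarNC} becomes the requirement that, for all $s \notin \sG_0$ and all $g, h$ with $g h^{-1} = s$,
\[
  m(g)\overline{m(h)} \;=\; \overline{m(h)}\,m(g h^{-1}) + m(g)\,\overline{m(h g^{-1})} - \overline{m(g h^{-1})}\, m(h g^{-1}),
\]
where the $\EE^\perp$ exactly restricts attention to $s \notin \sG_0$ (terms with $s \in \sG_0$ are annihilated on both sides and carry no information). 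Substituting $g h^{-1} = s$: after relabelling $g \rightsquigarrow g$, $h g^{-1} = s^{-1} \cdot (\text{stuff})$... — more cleanly, set $u = g^{-1}$ and $v = h$, reparametrize so that the pair running over $g h^{-1} = s$ becomes a pair $(x, y)$ with the constraint absorbed, and the displayed scalar identity collapses to
\[
  \big(m(g^{-1}) - m(h)\big)\big(m(gh) - m(g)\big) = 0
\]
after the change of variables that turns $g h^{-1}$-indexed sums into the variables of \eqref{eq:CotlarFactor}. The forward direction \ref{itm:ClosedFormulaCotlar.1} $\Rightarrow$ \ref{itm:ClosedFormulaCotlar.2} is obtained by choosing $f = \lambda_x + \lambda_y$ for suitable $x, y$ realizing a prescribed $g, h$ and reading off the coefficient of the relevant $\lambda_s$; the converse \ref{itm:ClosedFormulaCotlar.2} $\Rightarrow$ \ref{itm:ClosedFormulaCotlar.1} follows because the scalar identity, holding termwise, makes the two finite sums agree coefficient-by-coefficient, hence as elements of $\L\sG$, and then extends to all of $\A \cap L_2(\L\sG)$ by density and $L_2$-continuity of all the maps involved (each of $H$, multiplication by a fixed $L_2$ element, and $\EE^\perp$ is $L_2$-continuous on the relevant domain; here one uses that $T_m$ is bounded on $L_2$ since $m$ is bounded).

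The main obstacle I expect is \emph{bookkeeping the change of variables and the role of $\sG_0$} rather than anything deep. One must (a) verify that the factored form is precisely equivalent, for every fixed $s$, to the full collection of scalar equations indexed by $\{(g,h) : g h^{-1} = s\}$ — in particular that the quadratic expression above factors as the stated product, which is a short but slightly fiddly algebraic manipulation (expand $(m(g^{-1}) - m(h))(m(gh) - m(g))$ and match); (b) confirm that restricting to $s = g h^{-1} \notin \sG_0$ in \eqref{eq:NCCotlar} matches the quantifier ``$\forall g \in \sG\setminus\sG_0$'' in \eqref{eq:CotlarFactor} after the reparametrization — i.e. the condition $g h^{-1} \notin \sG_0$ must be traced through to become ``$g \notin \sG_0$'' in the final variables, which is exactly where the left-$\sG_0$-invariance hypothesis on $m$ (so that $m$ descends to $\sG_0\backslash\sG$) is used to discard the redundant equations; and (c) handle the non-discrete case, where $f = \lambda_x + \lambda_y$ is not literally in $\A$ but is approximated by elements of $\lambda[L_\infty(\sG)_{\mathrm c}]$, so that the coefficient-extraction argument must be run against test functionals $\widehat{(\cdot)}(s)$ and justified by $L_2$-density; the added unit in $\A$ plays no role here since $H(\1) = m(e)\1 \in \L\sG_0$ is annihilated by $\EE^\perp$. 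None of these steps is hard, but getting the indices exactly right is the crux.
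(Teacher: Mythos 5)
Your route is the same as the paper's: expand \eqref{eq:CotlarNC} on the frequency side, collect the coefficient at each $\lambda_s$ with $s\in\sG\setminus\sG_0$, recognize the factored kernel, and extract individual scalar equations by testing on functions supported on two group elements (bumps in the non-discrete case). Two corrections to the bookkeeping. First, the last term of your scalar identity should be $m(gh^{-1})\,\overline{m(hg^{-1})}$, not $\overline{m(gh^{-1})}\,m(hg^{-1})$: the $\lambda_s$-coefficient of $H\big(H(ff^\ast)^\ast\big)$ is $c_j\overline{c_k}\,m(s)\,\overline{m(s^{-1})}$ after the swap $j\leftrightarrow k$ (and your displayed formula for that term carries $c_j\overline{c_k}$ where it should be $\overline{c_j}c_k$ before the swap). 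As written your quadratic expression does not factor; with the corrected term it becomes $\big(m(g)-m(gh^{-1})\big)\,\overline{\big(m(h)-m(hg^{-1})\big)}$, which is indeed equivalent to \eqref{eq:CotlarFactor}. Second, no left $\sG_0$-invariance of $m$ is needed anywhere in this equivalence (the theorem does not assume it): the operator $\EE^\perp$ simply restricts the frequency $s=gh^{-1}$ to $\sG\setminus\sG_0$, and setting $g:=s$, $h:=g_k$ matches the quantifiers of \eqref{eq:CotlarFactor} directly.

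The genuine gap is in the extraction step of \ref{itm:ClosedFormulaCotlar.1}$\Rightarrow$\ref{itm:ClosedFormulaCotlar.2}. You claim that testing on $f=\lambda_{h_0}+\lambda_{g h_0}$ and reading off the coefficient of $\lambda_g$ yields the scalar equation for the prescribed pair, but this fails verbatim when $g^2=e$ with $g\notin\sG_0$: then both ordered pairs $(g h_0,h_0)$ and $(h_0,g h_0)$ contribute to the frequency $g$, and the extracted coefficient is $G_g(h_0)+\overline{G_g(h_0)}$, where $G_g(h)=\big(m(gh)-m(g)\big)\overline{\big(m(h)-m(g^{-1})\big)}$, so you only learn $\Re\, G_g(h_0)=0$. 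One needs a second test function, e.g. $f=\lambda_{h_0}+i\,\lambda_{g h_0}$, whose coefficient at $\lambda_g$ is $i\big(G_g(h_0)-\overline{G_g(h_0)}\big)$, to get $\Im\, G_g(h_0)=0$ as well; this is precisely the point the paper's proof treats explicitly. Finally, in the converse direction for non-discrete $\sG$ your density argument through finite sums of monomials does not parse as stated (then $\lambda_x\notin L_2(\L\sG)$ and such sums are not in $\A$); it is cleaner, and is what the paper does, to argue directly for $f\in\lambda[L_\infty(\sG)_{\mathrm{c}}]$: the $\lambda_g$-coefficient of the difference of the two sides equals $\int_\sG \widehat{f}(gh)\,\overline{\widehat{f}(h)}\,G_g(h)\,d\mu(h)$, which vanishes pointwise once $G_g\equiv 0$, so no approximation is needed in that direction.
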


\begin{proof}
  Expanding \eqref{eq:CotlarNC} for $T_m$ gives
  \begin{equation*}
    0 \, = \, \underbrace{\EE^\perp \Big[ T_m(f) \, T_m(f)^\ast \Big]}_{\mathrm{(I)}}
              - \underbrace{\EE^\perp \Big[ T_m \big( f \, T_m(f)^\ast \big) \Big]}_{\mathrm{(II)}}
              - \underbrace{\EE^\perp \Big[ T_m \big( f \, T_m(f)^\ast \big)^\ast \Big]}_{\mathrm{(III)}}
              + \underbrace{\EE^\perp \Big[ T_m \big( T_m(f f^\ast)^\ast \big) \Big]}_{\mathrm{(IV)}}.
  \end{equation*}
  Now, elementary computations yield that
  \begin{eqnarray*}
    \mathrm{(I)}
      & = & \int_{\sG \setminus \sG_0} \int_\sG \widehat{f}(g h) \, 
            \overline{\widehat{f}(h)} \, m(gh) \, \overline{m(h)} \, \lambda_g \, d\mu(h) \, d\mu(g)\\
    \mathrm{(II)}   
      & = & \int_{\sG \setminus \sG_0} \int_\sG \widehat{f}(g h) \, 
            \overline{\widehat{f}(h)} \, m(g) \, \overline{m(h)} \, \lambda_g \, d\mu(h) \, d\mu(g)\\
    \mathrm{(III)}
      & = & \int_{\sG \setminus \sG_0} \int_\sG \widehat{f}(g h) \, 
            \overline{\widehat{f}(h)}  \, m(gh) \, \overline{m(g^{-1})} \, \lambda_g \, d\mu(h) \, d\mu(g)\\
    \mathrm{(IV)}
      & = & \int_{\sG \setminus \sG_0} \int_\sG \widehat{f}(g h) \, 
            \overline{\widehat{f}(h)} \, m(g) \, \overline{m(g^{-1})} \, \lambda_g \, d\mu(h) \, d\mu(g),
  \end{eqnarray*}
  where $\widehat{f}$ is given as in \eqref{eq:Plancherel}. This in turn implies, using the Plancherel theorem, that
  \[
    0 = \int_\sG \widehat{f}(g h) \, \overline{\widehat{f}(h)} \, 
        \big( m(gh) - m(g) \big) \, \big( \overline{m(h) - m(g^{-1})} \big) \, d\mu(h)
    \quad \mbox{ for almost every } g \in \sG \setminus \sG_0.
  \]
  Obviously, if the factor $G_g(h) = ( m(gh) - m(g) ) \, ( \overline{m(h) - m(g^{-1})} )$ is equal to $0$ so is the above integral and therefore \eqref{eq:CotlarNC} holds. The reciprocal is immediate in the case of discrete groups. Indeed, choose any $h_0 \in \sG$ and assume that $g \in \sG \setminus \sG_0$ is fixed. Pick $\widehat{f} = \delta_{h_0} + \delta_{g h_0}$. In order to evaluate the integral, notice that
  \[
    \widehat{f}(g h) \, \overline{\widehat{f}(h)}
    = \delta_{\{h = h_0\}} + \delta_{\{g^2=e\}} \cdot \delta_{\{h = g h_0\}}.
  \]
The term $\delta_{h=h_0}$ in the above sum gives ${G_g(h_0)}$ in the integral. The term in which $g^2=e$ and $h = g h_0$ gives $\overline{G_g(h_0)}$. Therefore, $\Re\{ G_g(h)\}=0$ for any $h \in \sG$. The imaginary part is similarly shown to be $0$.
In the case of a continuous group $\sG$ it is necessary to change $\delta_{h_0}$ with a modification of the unit.
\end{proof}


\begin{remark} \normalfont
  \label{rmk:NonrelativeCotlar}
  Notice that Theorem \ref{thm:ClosedFormulaCotlar} works similarly in the non-relative case. In that case it holds that $T_m$ has the \eqref{eq:CotlarNoExp} if and only if $m$ satisfies the identity $( m(g^{-1} ) - m(h) ) \, ( m(gh) - m(g) ) = 0$ for almost every $g, h \in \sG$. 
\end{remark}

With all that at hand we are ready to prove Theorem \ref{thm:TheoremA}.

\begin{proof}[Proof (of Theorem \ref{thm:TheoremA})]
  Since $\sG_0$ is closed we have two situations, either it is open or of empty interior. In the first case, we have that $\mu(\sG_0) > 0$ and we have that the Formula in \ref{itm:ClosedFormulaCotlar.2} is equivalent to \eqref{eq:CotlarNC} by Theorem \ref{thm:ClosedFormulaCotlar}. Observe that, if $m$ is left-$\sG_0$ invariant, then $T_m$ is left $\VN \sG_0$-modular. Thus, we can apply Proposition \ref{prp:LpExtrapolation} to obtain the bound \eqref{eq:TheoremABound} for $p \geq 2$. Using that, by Plancherel Theorem, $\| T_m:L_2(\VN \sG) \to L_2(\VN \sG) \| = \| m \|_\infty$ we get the result for $2 \geq p$. For $1 < p < 2$ the result follows by standard duality arguments. In the case of $\sG_0$ of empty interior the proof follows similarly by using Remark \ref{rmk:NonrelativeCotlar} and the last assertion on Proposition \ref{prp:LpExtrapolation}. 
\end{proof}

\begin{remark} \normalfont
  \label{rmk:NonInvariantCharacters}
  Let $\alpha: \N \to \N$ be a normal and trace preserving $\ast$-homomorphism. It is immediate that both Proposition \ref{prp:LpExtrapolation} as well as Lemma \ref{lem:ExpectationBdd} hold if we change the condition of $H$ being left $\N$-modular by that of being left $\N$-modular relative to $\alpha$, i.e.,
  \[
    H(f \, g) = \alpha(f) \, H(g), \quad \mbox{ for } f \in \N, \, g \in L_2(\M) \cap \M.
  \]
  In the case of multipliers this easy observation has deep consequences. For instance, let $\chi: \sG_0 \to \TT$ be a (multiplicative) character. It is a straightforward consequence of Fell's absorption principle that the map $\lambda_g \mapsto \chi(g) \, \lambda_g$ induces a normal and trace-preserving $\ast$-homomorphism $\alpha_\chi: \VN \sG_0 \to \VN \sG_0$. Let $H = T_m$ be a Fourier multiplier on $\VN \sG$. We have that it is left $\VN \sG_0$-modular with respect to $\alpha_\chi$, ie $H(f \, g) = \alpha_\chi(f) \, H(g)$, for every $f \in \N$ and $g \in L_2(\M) \cap \M$ iff
  \begin{equation}
    \label{eq:LeftInvariant}
    m(k \, g) =  \chi(k) \, m(g), \quad \mbox{ for every } k \in \sG_0, \, g \in \sG.
  \end{equation}
  This is specially useful when $\sG_0$ is Abelian since, in that case, every function on $\sG_0$ can be expressed as a convex combination of characters by the Fourier transform. This will be exploited in Theorem \ref{thm:InitialSegmentG0Abelian}. It would also be used in a forthcoming paper of the third-named author \cite{Runlian2023Chinese}.
\end{remark}


\textbf{Tightening the constant.} It is known that, in the real line $\sG = \RR$ the operator $L_p$-norm of the classical Hilbert transform \eqref{eq:HT} is given by 
\[
  \big\| H: L_p(\RR) \to L_p(\RR) \big\| 
  \, = \, \max \Big\{ \tan \Big( \frac{\pi}{2 \, p} \Big), \,\, \cot\Big( \frac{\pi}{2 \, p} \Big) \Big\},
  \quad \mbox{ for } 1 < p < \infty
\] 
see \cite{Pichorides1972} or \cite{Grafakos1997Constant} for a simplified proof. These constants grow asymptotically like $p$ as $p \to \infty$ and like $1/(p-1)$ as $p \to 1^{+}$, and those are the growth orders that we conjecture optimal in the noncommutative case as well. 
An interesting observation, originally made by Gokhberg and Krupnik in the classical case \cite{GokhbergKrupnik1968} is that Cotlar's identity in the real line gives the optimal order of growth for the constant in terms of $p$. Indeed, in the classical case, the fact that $H^2 = - \Id$ yields a recurrence relation of the form
\begin{equation}
  \label{eq:RecurrenceClassical}
  c_{2p}^2 
  \, \leq \,
  2 c_p \, c_{2 p} + 1
\end{equation}
instead of \eqref{eq:Recurrence1}. The lack of a term depending on $c_p^2$ gives a decisively smaller bound. Solving the quadratic inequality in \eqref{eq:RecurrenceClassical}, gives
\[
  c_{2p} \leq c_p + \sqrt{c_p^2 + 1}
\]
and that results, after applying duality and interpolation, in the optimal growth order for the constant. 

In the noncommutative case, the same type of argument holds for operators $H: L_2(\M) \to L_2(\M)$ satisfying that $H \, H_\op = -\Id$, where $H_\op(f) = H(f^\ast)^\ast$. We have the following improvement over Proposition \ref{prp:LpExtrapolation}.

\begin{proposition}
  \label{prp:ExtrapolationIdempotent}
  Let $\N \subseteq \M$ and $H:L_2(\M) \to L_2(\M)$ be as before and assume $H$ is left $\N$-modular, 
  commutes with $\EE: \M \to \N$ and satisfies that $\EE^\perp H \, H_\op = -\EE^\perp$. 
  If $H$ satisfies \eqref{eq:CotlarNC}, then
  \[
    \big\| H: L_p(\M) \to L_p(\M) \big\|
    \, \lesssim \,
    p \, \big\| H:L_2(\M) \to L_2(\M) \big\|
    \quad \mbox{ for every } p \geq 2.
  \]
  The same inequality holds in the non-relative case if $H \, H_\op = -\Id$.
\end{proposition}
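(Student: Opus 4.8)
The plan is to repeat verbatim the dyadic induction used to prove Proposition~\ref{prp:LpExtrapolation}, the only difference being that the hypothesis $\EE^\perp H H_{\op} = -\EE^\perp$ lets us dispose of the quadratic term $a_p^2$ in the recurrence \eqref{eq:Recurrence1}, replacing it by a recurrence of the same shape as the classical one \eqref{eq:RecurrenceClassical}; this improves the growth from $p^\beta$ to $p$. Fix $p \geq 2$, abbreviate $c_q = \|H : L_q(\M) \to L_q(\M)\|$ and $\kappa = \|\EE H(\1)\|_\infty$, and assume inductively that $c_p < \infty$ (the base case $q = 2$ being trivial). One may assume $\EE^\perp \neq 0$, since otherwise left $\M$-modularity forces $H(f) = f \, H(\1)$, whence $c_p = \kappa$ and there is nothing to prove; and when $\EE^\perp \neq 0$, evaluating the identity $\EE^\perp = -\EE^\perp H H_{\op}$ on $L_2(\M)$ (where $\EE^\perp$ is the orthogonal projection complement, of norm $1$) gives $1 \leq c_2^2$, so $K := \max\{c_2, \kappa\} \geq 1$.

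For $f \in \A \cap L_{2p}(\M)$ with $\|f\|_{2p} \leq 1$ one has, exactly as in Proposition~\ref{prp:LpExtrapolation}, that $H(f) \in \A$ with $\|H(f)\|_{2p} < \infty$, and
\[
  \|H(f)\|_{2p}^2 = \|H(f) H(f)^\ast\|_p
  \leq \big\| \EE\big[ H(f) H(f)^\ast \big] \big\|_p + \big\| \EE^\perp\big[ H(f) H(f)^\ast \big] \big\|_p .
\]
The first summand is $\leq c_2^2 \, \|\EE[ff^\ast]\|_p \leq c_2^2$ by \eqref{eq:ExpectationBdd.1} and the contractivity of $\EE$ on $L_p$. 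For the second, expand it through Cotlar's identity \eqref{eq:CotlarNC} into the three pieces $\EE^\perp[H(f H(f)^\ast)]$, $\EE^\perp[H(f H(f)^\ast)^\ast]$ and $-\EE^\perp[H(H(ff^\ast)^\ast)]$. The two ``mixed'' pieces are estimated exactly as in Proposition~\ref{prp:LpExtrapolation} --- bounding $\EE^\perp$ by the identity and by $\EE$ separately, then using left $\N$-modularity, $\EE H = H\EE$, H\"older and \eqref{eq:ExpectationBdd.2} --- and each is $\leq c_p \|H(f)\|_{2p} + \kappa c_2$. The new point is the third piece: since $ff^\ast$ is self-adjoint, $H(H(ff^\ast)^\ast) = H(H_{\op}(ff^\ast)) = (H H_{\op})(ff^\ast)$, so by hypothesis $\EE^\perp[H(H(ff^\ast)^\ast)] = -\EE^\perp[ff^\ast]$ and therefore $\|\EE^\perp[H(H(ff^\ast)^\ast)]\|_p = \|\EE^\perp[ff^\ast]\|_p \leq 2\|ff^\ast\|_p \leq 2$. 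Writing $X = \|H(f)\|_{2p}$ and collecting, $X^2 \leq 2 c_p X + (c_2^2 + 2\kappa c_2 + 2)$, hence $X \leq c_p + \sqrt{c_p^2 + c_2^2 + 2\kappa c_2 + 2}$; taking the supremum over such $f$ and using the density of $\A \cap L_{2p}$ in $L_{2p}$ yields $c_{2p} \leq c_p + \sqrt{c_p^2 + C_0 K^2}$ for an absolute constant $C_0$, which in particular is finite and closes the induction.

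Finally, setting $d_k = c_{2^k}/K$ the recurrence reads $d_{k+1} \leq d_k + \sqrt{d_k^2 + C_0} \leq 2 d_k + \sqrt{C_0}$, so $d_k + \sqrt{C_0} \leq 2^{k-1}(d_1 + \sqrt{C_0})$ with $d_1 = c_2/K \leq 1$; thus $c_{2^k} = K d_k \lesssim 2^k K$, i.e. $c_p \lesssim p K$ along the dyadic exponents $p = 2^k$. Marcinkiewicz interpolation for the intermediate exponents then gives $c_p \lesssim p \, \max\{\|H : L_2(\M) \to L_2(\M)\|, \|\EE H(\1)\|_\infty\}$ for all $p \geq 2$, as claimed. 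The only step beyond bookkeeping is the observation that the self-adjointness of $ff^\ast$ rewrites $H(H(ff^\ast)^\ast)$ as $(H H_{\op})(ff^\ast)$, so that $\EE^\perp H H_{\op} = -\EE^\perp$ applies and annihilates exactly the term responsible for the $a_p^2$ contribution in the recurrence of Proposition~\ref{prp:LpExtrapolation}.
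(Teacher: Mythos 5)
Your proof is correct and follows essentially the same route as the paper: the hypothesis $\EE^\perp H H_\op = -\EE^\perp$ is used, exactly as in the paper's argument, to rewrite the term $-\EE^\perp\big[H\big(H(ff^\ast)^\ast\big)\big]$ as $\EE^\perp[ff^\ast]$ (via $H(H(ff^\ast)^\ast)=(HH_\op)(ff^\ast)$), after which the dyadic induction of Proposition \ref{prp:LpExtrapolation} produces a recurrence of the form $c_{2p}^2 \leq 2\,c_p\,c_{2p} + \mathrm{O}(K^2)$ with no $c_p^2$ term, whose solution grows linearly and yields the stated bound after Marcinkiewicz interpolation. Your additional bookkeeping (the degenerate case $\EE^\perp = 0$ and the remark that $K \geq 1$) merely makes explicit details the paper leaves implicit.
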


\begin{proof}
  The proof is immediate once it is noticed that the property $\EE^\perp H \, H_\op = -\EE^\perp$ implies that \eqref{eq:CotlarNC} can be rewritten as
  \[
    \EE^\perp \big[ H(f) \, H(f)^\ast \big] 
    \, = \, 
    \EE^\perp \left[ H \big( f \, H(f)^\ast \big) + H \big( f \, H(f)^\ast \big)^\ast 
      + f f^\ast \right].
  \]
  Applying the same proof of Proposition \ref{prp:LpExtrapolation} gives the recurrence $c_{2p}^2 \leq 2 \, c_{2 p} \, c_p + 1 + 3 c_2^2$. After solving the quadratic inequality, we obtain
  $c_{2p} \leq c_p + \sqrt{c_p^2 + \kappa}$, where $\kappa = 1 + 3 c_2^2$. Iterating and applying Marcinkiewicz interpolation gives the bound. The same proof works in the non-relative case.
\end{proof}

Observe that if $H = T_m$ is a Fourier multiplier, then $(T_m)_\op = T_{\tilde{m}}$, for $\widetilde{m}(g) = \overline{m(g^{-1})}$. Thus, we are asking that $m(g) \, \overline{m(g^{-1})} = -1$ for every $g \in \sG \setminus \sG_0$. Similarly, since in the case of multipliers $\| \EE H(\1) \|_\infty = \| m \, \1_{\sG_0} \|_\infty \leq \| m \|_\infty = c_2$ we can simplify the recurrence above assuming $c_2 = 1$. In particular, we obtain the following corollary.

\begin{corollary}
  \label{cor:OptConstantGroup}
  Let $\sG$ be a group and let $m: \sG \to \CC$ be a function satisfying \eqref{eq:CotlarFactor} 
  relative to a subgroup $\sG_0 \subseteq \sG$, and such that $m$ is left $\sG_0$-invariant and $m(g)\overline{m(g^{-1})} = -1$, 
  for every $g \in \sG \setminus \sG_0$. Then
  \[
    \big\| T_m: L_p(\VN \sG) \to L_p(\VN \sG) \big\|_\cb
    \,\, \lesssim \,\,
    \Big( \frac{p^2}{p-1} \Big) \, \| m \|_\infty.
  \]
\end{corollary}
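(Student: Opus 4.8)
The plan is to combine Corollary's hypotheses with the two extrapolation results already proved. Since $m$ satisfies \eqref{eq:CotlarFactor} relative to $\sG_0$ and is left $\sG_0$-invariant, Theorem \ref{thm:ClosedFormulaCotlar} tells us $T_m$ satisfies \eqref{eq:CotlarNC}, and the left $\sG_0$-invariance gives that $T_m$ is left $\L\sG_0$-modular. Moreover $T_m$ commutes with $\EE$, since $\EE$ is itself the Fourier multiplier with symbol $\1_{\sG_0}$, and Fourier multipliers on the same group always commute. So all the structural hypotheses of Proposition \ref{prp:ExtrapolationIdempotent} are in place once we verify the remaining algebraic condition $\EE^\perp H H_{\op} = -\EE^\perp$.

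First I would check that condition. As noted right after Proposition \ref{prp:ExtrapolationIdempotent}, $(T_m)_{\op} = T_{\widetilde m}$ with $\widetilde m(g) = \overline{m(g^{-1})}$, so $T_m (T_m)_{\op} = T_{m \cdot \widetilde m}$ is the Fourier multiplier with symbol $g \mapsto m(g)\overline{m(g^{-1})}$. By hypothesis this symbol equals $-1$ for every $g \in \sG \setminus \sG_0$. Since $\EE^\perp = \Id - \EE$ kills the part of any element supported on $\sG_0$ and leaves untouched the part supported on $\sG \setminus \sG_0$, we get $\EE^\perp T_m (T_m)_{\op} f = \EE^\perp (T_{m\cdot\widetilde m} f)$, and on the range of $\EE^\perp$ the symbol $m \cdot \widetilde m$ acts exactly as multiplication by $-1$; hence $\EE^\perp T_m (T_m)_{\op} = -\EE^\perp$, as required. (One should be slightly careful about $H(\1)$ in the non-discrete case, but the modularity convention fixes $H(\1) = m(e)\1$ and the identity above only concerns the $\EE^\perp$ part, so the unit plays no role.)

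Next I would invoke Proposition \ref{prp:ExtrapolationIdempotent} to obtain, for $p \ge 2$,
\[
  \big\| T_m : L_p(\L\sG) \to L_p(\L\sG) \big\|
  \, \lesssim \,
  p \, \max\big\{ \|T_m : L_2 \to L_2\|, \, \|\EE T_m(\1)\|_\infty \big\}
  \, = \, p \, \|m\|_\infty,
\]
using $\|T_m : L_2 \to L_2\| = \|m\|_\infty$ and $\|\EE T_m(\1)\|_\infty = |m(e)| \le \|m\|_\infty$. For $1 < p \le 2$ one passes to the dual: the adjoint of $T_m$ on $L_{p'}$ is $T_{\overline m}$ (or $T_{\widetilde m}$, depending on the pairing), which satisfies the same set of hypotheses relative to $\sG_0$ — in particular $\overline{m}$ still satisfies \eqref{eq:CotlarFactor} and is still left $\sG_0$-invariant with $\overline m(g)\overline{\overline m(g^{-1})} = \overline{m(g)\overline{m(g^{-1})}} = -1$ — so the $p \ge 2$ bound applies to it and transfers back by duality to give $\|T_m : L_p \to L_p\| \lesssim p' \|m\|_\infty \lesssim \frac{p}{p-1}\|m\|_\infty$. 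Combining the two regimes yields the stated bound $\lesssim \frac{p^2}{p-1}\|m\|_\infty$. Finally, to upgrade to the completely bounded norm one notes that the whole argument is stable under tensoring with $\B(\ell_2)$: the Cotlar identity, the modularity, and the identity $\EE^\perp H H_{\op} = -\EE^\perp$ all pass to $T_m \otimes \Id_{\B(\ell_2)}$ acting on $L_p(\L\sG \,\overline\otimes\, \B(\ell_2))$ verbatim, since these are algebraic identities not involving the $L_p$ norm; so the extrapolation machinery run at the amplified level produces the same constant for $\|T_m\|_{\cb}$.

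I expect the only genuine point requiring care — hardly an obstacle — to be the bookkeeping around the unit $\1$ and the domain algebra $\A$ in the non-discrete case, and making sure the duality step is set up with the correct conjugate symbol so that all three hypotheses of Proposition \ref{prp:ExtrapolationIdempotent} genuinely survive. Everything else is a direct citation of the already-established Theorem \ref{thm:ClosedFormulaCotlar} and Proposition \ref{prp:ExtrapolationIdempotent}.
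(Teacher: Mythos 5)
Your argument is correct and is essentially the paper's own proof: the remark preceding Corollary \ref{cor:OptConstantGroup} makes exactly your observation that $(T_m)_\op = T_{\widetilde m}$ with $\widetilde m(g)=\overline{m(g^{-1})}$, so the hypothesis $m(g)\overline{m(g^{-1})}=-1$ off $\sG_0$ yields $\EE^\perp H H_\op = -\EE^\perp$, after which Proposition \ref{prp:ExtrapolationIdempotent} with $c_2=\|m\|_\infty$ and $\|\EE H(\1)\|_\infty=|m(e)|\le\|m\|_\infty$ gives the $p\ge 2$ bound, and the $1<p\le 2$ range follows by the same standard duality (your care that the dual symbol still satisfies \eqref{eq:CotlarFactor}, left invariance and the product condition, and that everything amplifies to give the cb norm, matches the paper's intent).
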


Both Proposition \ref{prp:ExtrapolationIdempotent} and Corollary \ref{cor:OptConstantGroup} are still true if one changes the value of $m(g) \, \overline{m(g^{-1})}$ from $-1$ to any other constant independent of $g$.

\textbf{Non Fourier multiplier examples.}
All of the machinery developed or review in this Section is formulated in a way that works beyond the case of Fourier multipliers. As an illustration, another family of examples comes from Schur multipliers. Indeed, let $\D \subset \B(\ell_2 \ZZ)$ be the finite span of the matrix units $e_{j \, k}$. A Schur multiplier is a linear map $S_m: \D \subseteq \B(\ell_2 \ZZ) \to \B(\ell_2 \ZZ)$ defined by sending the matrix unit $e_{j \, k}$ to $m(j,k) \, e_{j \, k}$ for some function $m:\ZZ \times \ZZ \to \CC$ ---called the symbol of $S_m$. Schur multipliers are $\ell_\infty(\ZZ)$-bimodular, with $\ell_\infty(\ZZ)$ sitting inside $\B(\ell_2 \ZZ)$ as the diagonal subalgebra. As such, if we take in Theorem \ref{prp:LpExtrapolation} $\M$ to be $\B(\ell_2 \ZZ)$ and $\N$ to be $\ell_\infty(\ZZ)$, we have that any bounded symbol $m$ satisfying  \eqref{eq:CotlarNC} extends boundedly to all Schatten classes $S_p$ with $1 < p < \infty$. The same computations of Theorem \ref{thm:ClosedFormulaCotlar} can be performed in this setting, yielding that
\begin{align}
    S_m \text{ satisfies } \eqref{eq:CotlarNC} & \text{ relative to } \ell_\infty \nonumber\\
      & \equivalent
        \big( m(j,k) - m(j,\ell) \big) \, \big( m(\ell,j) - m(\ell,k) \big) \, \1_{\{j \neq \ell\}} = 0, \; \forall j, k, \ell \in \ZZ. \label{eq:CotlarSchur}
\end{align}    
This makes verifying the Cotlar identity for Schur multipliers straightforward. For instance, the triangular truncation $H = S_m$ is the Schur multiplier with symbol $m(j,k) = \sgn(j - k)$. It is trivial to verify that it satisfies identity \eqref{eq:CotlarSchur}. Indeed, if $m(j,k) \neq m(j,\ell)$ it is because $j < \ell$ and $j \geq k$ or vice--versa. If the first case holds, then $k \leq j < \ell$, which implies that $m(\ell,j) = m(\ell,k)$. The other case is checked similarly. Since $H \, H^\op = - \Id$, we recover the sharp bound of the operator-norm on $S_p$ as $p \to 1^{+}, \infty$. This is not a novel result, since the triangular truncation is a very well understood object whose $S_p$-boundedness can be obtained from an array of techniques ranging from subdiagonal algebras \cite{Randrianantoanina1998Hilbert} to Fourier-Schur transference for amenable groups \cite{NeuRic2011, CasSall2015} and smooth Schur multiplier techniques \cite{CondeGonParcetTablate2022I}. Nevertheless, it signals the possibility of exploiting Cotlar identities in settings beyond Fourier multipliers.

\textbf{The convex hull of Cotlar-type multipliers.} A natural question is which class of multipliers $m$ can be shown to be bounded in $L_p(\VN \sG)$ by being represented as a convex combination of multipliers satisfying \eqref{eq:CotlarNC} or natural modifications of them.
To that end notice that if $m$ is an $L_p$-bounded multiplier, then so is $g \mapsto m(h^{-1} \theta(g) r)$, where $h, r \in \sG$, $\theta \in \Aut(\sG)$ and their norms coincide. Let us denote the group of transformations of $\sG$ given by $g \mapsto h^{-1} \theta(g) r$ as the \emph{affine transformations $\Aff(\sG)$ of $\sG$}. Observe that, if we define $\sG^\Delta = (\sG \oplus \sG) / \Delta$, where $\Delta$ is the subgroup of diagonal central elements, $\Delta = \{ (a,a) : a \in \Zent(\sG) \} \subseteq \sG \oplus \sG$, then there is a faithful representation that sends $(h, r)$ to $g \mapsto h^{-1} \, g \, r$. A trivial computation gives that
\[
  \Aff(\sG) \cong \sG^\Delta \rtimes \Aut(\sG),
\] 
with the natural action. 
It is clear that, if $\mu \in M(\Aut(\sG))$ is a finite signed measure and $m: \Aff(\sG) \times \sG \to \CC$ is a bounded map such that $g \mapsto m(\alpha, g)$ satisfies \eqref{eq:CotlarNC} for every $\alpha$, then 
\[
  m(g) \, = \, \int_{\Aff(\sG)} m(\alpha, \alpha(g)) \, d \mu(\alpha),
\]
is clearly bounded in $L_p(\VN \sG)$ for every $1 < p < \infty$. We could add more flexibility to this technique by allowing the map $g \mapsto m(\alpha, g)$ to be the product of $k$ terms satisfying \eqref{eq:CotlarNC}. Let us call this class $\mathrm{coCot}^k(\sG)$. We leave mostly unexplored the following natural problem

\begin{problem}
  \label{prb:ConvexCotlar}
  Let $\sG_0 \subseteq \sG$ and $m:\sG \to \CC$ be as above. Are there sufficient conditions, 
  for example in terms of smoothness, implying that $m \in \mathrm{coCot}^k(\sG)$?
\end{problem}

This remains as a underexplored approach to prove the boundedness of Fourier multipliers over groups without recourse to noncommutative analogues of singular integral theory. In fact, Observe that in the classical case of $\RR$ any function of bounded variation lays in the convex hull of (translations of) the classical Hilbert transform and therefore $m \in \mathrm{BV}(\RR) \implies m \in \mathrm{coCot}^1(\RR)$, see \cite[Corollary 3.8]{Duoan2001Book}. In higher dimensions the behavior is even richer. For instance, let $m:\RR^2 \to \CC$ be an homogeneous function, ie a function satisfying that $m(\lambda \, \xi) = m(\xi)$ for every $\lambda > 0$. Clearly, $m$ depends only on its angular component $m{|}_\TT$. We have that
\[
  m{|}_\TT \in \mathrm{BV}(\TT) \quad \implies \quad m \in \mathrm{coCot}^2(\RR^2),
\]
where $\mathrm{BV}(\TT)$ is the space of functions of bounded variation on the torus. To see that, just notice that the indicator function $\1_{\Sigma_\theta}$ of the sector $\Sigma_\theta \subseteq \RR^2$ of all vectors whose polar angle $\omega$ lays in $[0,\theta)$ can expressed as the product of the characteristic functions of two half-planes, each of which satisfies a Cotlar identity. Homogeneous functions of angular bounded variation can be expressed as convex combinations of different sector indicators.
%

\textbf{Generalizations.}
It worth noticing that the identity \eqref{eq:CotlarNC} can be generalized in a natural way by changing the equality by an operator inequality
\begin{equation}
  \tag{Cotlar$_{\EE^\perp}^{\leq}$}
  \label{eq:CotlarNCleq}
  \EE^\perp \big[ H(f) \, H(f)^\ast \big] 
  \, \leq \, 
  \EE^\perp \left[ H \big( f \, H(f)^\ast \big) + H \big( f \, H(f)^\ast \big)^\ast 
    - H \big( H(f f^\ast)^\ast \big) \right] + \Lambda \, \EE [f \, f^\ast],
\end{equation}
for every $f \in \M \cap L_2(\M)$ and some constant $\Lambda \geq 0$. It is clear that this inequality implies the same bound \eqref{eq:Extrapolation} of Proposition \ref{prp:LpExtrapolation} for left $\N$-modular operators, just with a constant growing like $\sO(1 + \sqrt{\Lambda})$. Similarly, there exists a closed-formula characterization of Fourier multipliers $T_m$ with left $\sG_0$-invariant symbol $m$ satisfying \eqref{eq:CotlarNCleq}. To formulate such characterization let us define
\[
  \Omega_m(g,h) = \big( m(g) - m(g h^{-1}) \big) \, \overline{\big( m(h) - m(h g^{-1}) \big)} \, \1_{\sG \setminus \sG_0}(g h^{-1}) + \Lambda \, \1_{\sG_0}(g h^{-1})
\]
and notice that \eqref{eq:CotlarNCleq} is actually equivalent to
\[
  0 \, \leq \, 
  \int_{\sG \setminus \sG_0} \int_\sG 
    \widehat{f}(g h) \, \overline{\widehat{f}(h)} \, 
    \Omega_m(g h, h) \, \lambda_g \, d\mu(h) \, d\mu(g).
\]
This is the key to the following
\begin{theorem}
  \label{thm:CotlarLeqFactor}
  Let $\sG$ be a unimodular group and let $\sG_0 \subseteq \sG$ be an open subgroup and $m$ 
  be a left $\sG_0$-invariant function. The following are equivalent.
  \begin{enumerate}[label={\rm \textbf{(\roman*)}}, ref={\rm {(\roman*)}}]
    \item \label{itm:CotlarLeqFactor.1} 
    The operator $H = T_m$ satisfies \eqref{eq:CotlarNCleq}.
    \item \label{itm:CotlarLeqFactor.2} 
    There is a Hilbert space $\H$ and a bounded measurable function $\xi: \sG \to \H$ such that $\Omega_m(g,h) = \big\langle \xi(h), \xi(g) \big\rangle$.
  \end{enumerate}
  If any of the two conditions hold, then bound \eqref{eq:TheoremABound} is satisfied.
\end{theorem}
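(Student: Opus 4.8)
The plan is to re-run the Fourier-side computation from the proof of Theorem~\ref{thm:ClosedFormulaCotlar}, now carrying an operator inequality, and to recognise the resulting positivity as positive-definiteness of the kernel $\Omega_m$. Concretely, I would first expand \eqref{eq:CotlarNCleq} for $H=T_m$ into the four integrals $\mathrm{(I)}$--$\mathrm{(IV)}$ of that proof; regrouping them shows that \eqref{eq:CotlarNCleq} holds for all $f\in\A\cap L_2(\L\sG)$ exactly when $\int_{\sG\setminus\sG_0}\int_\sG \widehat f(gh)\,\overline{\widehat f(h)}\,\Omega_m(gh,h)\,\lambda_g\,d\mu(h)\,d\mu(g)\ge0$ for all such $f$, which after the substitution $a=gh$, $b=h$ and a use of unimodularity is the positivity of
$$
  Q(\varphi)\ :=\ \int_\sG\int_\sG \varphi(a)\,\overline{\varphi(b)}\;\Omega_m(a,b)\;\lambda_a\lambda_b^{-1}\,d\mu(a)\,d\mu(b)
$$
for every $\varphi=\widehat f$ with $f\in\A\cap L_2(\L\sG)$; such $\varphi$ are dense in $L_2(\sG)$ and include all bounded compactly supported functions. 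So the theorem reduces to: $Q(\varphi)\ge0$ for all such $\varphi$ $\iff$ $\Omega_m(a,b)=\langle\xi(b),\xi(a)\rangle$ for some bounded measurable $\xi:\sG\to\H$.

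\textbf{(ii)$\Rightarrow$(i).} Expanding $\xi$ in an orthonormal basis gives $\Omega_m(a,b)=\sum_j\xi_j(b)\overline{\xi_j(a)}$ with $\xi_j(g)=\langle\xi(g),e_j\rangle$; substituting into $Q(\varphi)$, applying Fubini (valid since $\sum_j\|\varphi\,\overline{\xi_j}\|_{L_2(\sG)}^2=\int|\varphi|^2\|\xi\|^2d\mu\le\|\xi\|_\infty^2\|\varphi\|_2^2<\infty$) and the change of variable $b\mapsto b^{-1}$ together with $\lambda(\psi)^*=\lambda(\psi^*)$, $\psi^*(g)=\overline{\psi(g^{-1})}$, collapses $Q(\varphi)$ to $\sum_j\lambda(\varphi\,\overline{\xi_j})\lambda(\varphi\,\overline{\xi_j})^*$, an increasing, $L_1(\L\sG)$-bounded sum of positive operators, hence $\ge0$. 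By the reduction $T_m$ satisfies \eqref{eq:CotlarNCleq}; since $T_m$ is left $\L\sG_0$-modular with $\|T_m\|_{L_2\to L_2}=\|m\|_\infty$ and $\|\EE T_m(\1)\|_\infty=|m(e)|\le\|m\|_\infty$, the proof of Proposition~\ref{prp:LpExtrapolation} applies to \eqref{eq:CotlarNCleq} after discarding the positive term $\EE^\perp[\,\cdots]\ge0$, and together with duality for $1<p\le2$ this gives \eqref{eq:TheoremABound}.

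\textbf{(i)$\Rightarrow$(ii).} From $Q(\varphi)\ge0$ I must recover $\xi$. When $\sG$ is discrete this is trivial: the factor $\1_{\sG\setminus\sG_0}(g)$ inside $\Omega_m(gh,h)$ makes $Q(\varphi)$ a finite sum of $\lambda_g$'s with no $\lambda_e$-term, so $\tau(Q(\varphi))=0$ and faithfulness of the trace forces $Q(\varphi)=0$ for all $\varphi$, whence $\Omega_m\equiv0$ and (ii) holds with $\xi\equiv0$. For non-discrete $\sG$ I would test $Q(\varphi)\ge0$ against the vector states $\omega_v(x)=\langle xv,v\rangle_{L_2(\sG)}$, using $\omega_v(\lambda_a\lambda_b^{-1})=\langle\lambda_{ab^{-1}}v,v\rangle$. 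Choosing both $v=\sum_i v_i\beta_i$ and $\varphi=\sum_i c_i\beta_i$ as superpositions of normalised bumps $\beta_i$ centred at an $n$-tuple $(p_1,\dots,p_n)$ of Lebesgue points of $m$ and $\Omega_m$ in general position (meaning $p_ip_j^{-1}=p_\ell p_k^{-1}$ only for $(i,j)=(\ell,k)$, a dense conull condition on $\sG^n$), and shrinking the bumps, one finds that the diagonal patches $a\approx b\approx p_i$ contribute multiples of $|c_i|^2\Omega_m(p_i,p_i)=0$ (using the identity $\Omega_m(p,p)\equiv0$), while the patch $a\approx p_i$, $b\approx p_j$ with $i\ne j$ pairs uniquely with the state-slice near $g\approx p_ip_j^{-1}$ and contributes a fixed positive multiple of $v_j\overline{v_i}\,c_i\overline{c_j}\,\Omega_m(p_i,p_j)$. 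Hence in the limit $0\le\omega_v(Q(\varphi))=C\sum_{i,j}\overline{u_i}u_j\,\Omega_m(p_i,p_j)$ with $u_i=v_i\overline{c_i}$ arbitrary, so the matrix $[\Omega_m(p_i,p_j)]_{i,j}$ is positive semidefinite for a.e.\ $n$-tuple; that is, $\Omega_m$ is a positive-definite kernel. The Kolmogorov factorisation then yields $\H$ and a measurable $\xi$ with $\Omega_m(a,b)=\langle\xi(b),\xi(a)\rangle$, and the bound $|\Omega_m|\le4\|m\|_\infty^2$ combined with $\|\xi(a)-\xi(b)\|^2=\Omega_m(a,a)-\Omega_m(a,b)-\Omega_m(b,a)+\Omega_m(b,b)$ gives $\xi$ bounded.

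\textbf{Main obstacle.} The substantive point is the extraction in (i)$\Rightarrow$(ii): $Q(\varphi)\ge0$ only constrains the Toeplitz-type pairings $\langle\lambda_{ab^{-1}}v,v\rangle$ rather than arbitrary Gram matrices, so one must exploit both the abundance of translates $\lambda_gv$ (bump superpositions in general position) and the vanishing of the diagonal of $\Omega_m$ to force full positive-definiteness. Making the shrinking-bump limit rigorous for a merely bounded measurable kernel (a Lebesgue-point analysis) and checking that "general position" tuples are dense in the locally compact groups of interest are the places where care is needed; I would also double-check the sign bookkeeping in the reduction against the stated sign of the Gram representation.
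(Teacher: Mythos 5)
Your reduction and your proof of (ii)$\Rightarrow$(i) coincide with the paper's: expanding \eqref{eq:CotlarNCleq} into the four terms gives exactly the displayed positivity of $Q(\varphi)$, and your orthonormal-basis expansion of $\xi$, which rewrites $Q(\varphi)$ as $\sum_j\lambda(\varphi\overline{\xi_j})\lambda(\varphi\overline{\xi_j})^\ast\geq 0$, is the hands-on version of the paper's Hilbert $\L\sG$-module argument. For (i)$\Rightarrow$(ii) you genuinely deviate. Your discrete-case argument is correct and in fact sharper than what the paper records: since the symbol of $Q(\varphi)$ vanishes on $\sG_0$, its $\lambda_e$-coefficient is zero, so positivity plus faithfulness of $\tau$ forces $Q(\varphi)=0$, hence $\Omega_m\equiv 0$; this shows that for discrete $\sG$ the inequality \eqref{eq:CotlarNCleq} collapses to the equality \eqref{eq:CotlarNC}. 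This is consistent with the statement, because $\Omega_m(g,g)=0$ for all $g$, so any everywhere-valid Gram representation forces $\xi\equiv 0$ anyway (also in the paper's construction the classes of $\lambda_g$ are null vectors of the semi-inner product, by Cauchy--Schwarz). Note, however, that this trace argument does not transfer to non-discrete $\sG$: there $Q(\varphi)$ has a bounded compactly supported symbol but need not lie in $L_1(\L\sG)$, so $\tau(Q(\varphi))$ cannot be read off at $e$; the continuous case is where the content of the theorem sits, and it is also where the paper argues differently (polarization into a $\D$-valued form, composition with the functional $\Ee\big(\int\widehat f(g)\lambda_g\,d\mu(g)\big)=\int\widehat f\,d\mu$, a GNS quotient-completion, and approximate identities with ultraproducts in place of your pointwise extraction).

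In that continuous case your sketch has two genuine gaps. First, the final "Kolmogorov factorisation" step is not the classical one: your bump argument can only yield positive semidefiniteness of $[\Omega_m(p_i,p_j)]_{i,j}$ for almost every tuple of good points, whereas the classical Kolmogorov/Moore--Aronszajn factorisation needs the kernel to be positive definite at every finite configuration. You therefore need a measurable, almost-everywhere factorisation theorem (after modification of $\Omega_m$ on a null set), and you must reconcile this with the statement: a pointwise-everywhere Gram identity is impossible unless $\Omega_m\equiv 0$, precisely because the diagonal of $\Omega_m$ vanishes, so the conclusion you can reach is an a.e.\ one and the step from a.e.\ matrix positivity to a bounded measurable $\xi$ is asserted, not proved. (Relatedly, boundedness of $\xi$ should come from that construction; your displayed identity for $\|\xi(a)-\xi(b)\|^2$ again uses diagonal values that the a.e.\ representation does not control.) Second, the shrinking-bump limit itself needs more than you state: to get a single pair-independent constant $C$ you should take all bumps to be left translates of one fixed bump; the near-diagonal patches vanish because $\sG_0$ is open, so $\Omega_m$ vanishes on the whole neighbourhood $\{(a,b):ab^{-1}\in\sG_0\}$ of the diagonal, not merely because $\Omega_m(p,p)=0$; and since $m$ is only bounded and measurable, the convergence of the localized averages to $\Omega_m(p_i,p_j)$ requires a differentiation-basis/Lebesgue-point argument on $\sG\times\sG$ adapted to the group structure, which is delicate on general second countable locally compact groups. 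These are real missing pieces rather than routine checks, and they are exactly the points the paper's GNS-plus-approximate-identity route is designed to bypass; your approach looks completable, but as written it does not yet prove (i)$\Rightarrow$(ii) beyond the discrete case.
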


\begin{proof}
It is clear that if there exists a map $\xi: \sG \to \H$ as above, then
\[
  \begin{split}
    \int_{\sG \setminus \sG_0} \int_\sG 
      \widehat{f}(g h) \, & \overline{\widehat{f}(h)} \, 
      \Omega_m(g h, h) \, \lambda_g \, d\mu(h) \, d\mu(g) \\
    & = \left\langle \int_{\sG} \widehat{f}(g) \, \xi(g) \otimes \lambda_g \, d\mu(g), \,
                     \int_{\sG} \widehat{f}(h) \, \xi(h) \otimes \lambda_h \, d\mu(h) \right\rangle_\X,
  \end{split}
\]
where $\X$ is the left Hilbert $\VN \sG$-module given by completing $\H \otimes \VN \sG$ with the inner product 
$\langle \xi \otimes f, \eta \otimes g \rangle_\X = \langle \eta, \xi \rangle f g^\ast$. The order of the entries $\xi$ and $\eta$ is switched to maintain the convention that every scalar Hilbert product is antilinear in the first component. The same applies to the statement in point \ref{itm:CotlarLeqFactor.2}.

For the reciprocal, first notice that the identity \eqref{eq:CotlarNCleq} can be understood as a positivity condition for a quadratic form. Thus, applying polarization gives the sesquilinear form $B_H: \D \times \D \to \D$ given by
\[
  B_H( f, g )
  \, = \,
  \EE^{\perp} \Big[ - H(g) \, H(f)^\ast + H\big(g \, H(f)^\ast \big) + H\big(f \, H(g)^\ast \big)^\ast 
                    - H\big( H(f \, g^\ast)^\ast \big) \Big] + \Lambda \, \EE[g \, f^\ast] ,
\]
where $\D:= \lambda[C_c(\sG)]$. Observe that the unbounded map $\Ee: \D \subseteq \VN \sG \to \CC$ given by
\[
  \Ee \left( \int_{\sG} \widehat{f}(g) \, \lambda_g \, d\mu(g) \right)
  \, = \,
  \int_{\sG} \widehat{f}(g) \, d\mu(g)
\]
is multiplicative, and thus positive, therefore $\langle f, g \rangle_H = \Ee(B_H(f,g))$ is an inner product. Now, we can construct a Hilbert space $\H_0$ by quotienting out the nulspace and taking closures of $\D$ as usual. Notice that, in the case of discrete $\sG$, it holds that $\lambda_g \in \D$ and that $\langle \lambda_g, \lambda_h \rangle_H = \Omega_m(g,h)$. Therefore, defining $\xi(g)$ as the class in $\H_0$ of $\lambda_g$ gives the desired result. In the continuous case we can substitute $\lambda_g$ for $\lambda(\delta_g \ast \psi_{\alpha})$, where $(\psi_\alpha)_\alpha  \subseteq \D$ is an approximation of the unit and apply standard ultraproduct arguments.
\end{proof}

The reason why we need to add the term $\Lambda \, \EE[f \, f^\ast]$ to the Cotlar identity in the case in which $\sG_0$ is open is that, otherwise, we will be forcing the kernel $\Omega_m(g,h)$ to be positive definite while also $0$ at the diagonal $g = h$, which implies that it is $0$ everywhere. In the case in which $\sG_0$ is of empty interior, this extra term can be dropped.

This more general Cotlar identity \eqref{eq:CotlarNCleq} leads to the following problem, which we l eave unexplored.

\begin{problem} \normalfont
  \label{prb:ModelForleqCotlar}
  Let $\sG_0 \subseteq \sG$ and $m: \sG \to \CC$ be as above. Is there a geometric model of $\sG \acts \X$, possibly generalizing that of actions on UAC spaces in Theorem \ref{thm:TheoremB}, such that if $m(g)$ lifts to $\X$ via a function $m(g) = \widetilde{m}(g \cdot x_0)$, then it satisfies the condition in Theorem \ref{thm:CotlarLeqFactor}.\ref{itm:CotlarLeqFactor.2}?
\end{problem}

\section{Groups acting on \texorpdfstring{$\RR$}-trees \label{S:UAC}}
Let $X$ be a Hausdorff topological space. An arc $\gamma$ on $X$ is a subset of $X$ that is the image of an injective continuous function mapping $[0, 1]$ onto $\gamma$. The space $X$ is said to be \emph{uniquely arcwise connected}, or UAC, if any two points in $X$ are joined by a unique arc. We say that a group $\sG$ acts on an UAC space $X$ if $\sG$ acts by homeomorphisms on $X$.

If, in addition, the UAC space $X$ is metrisable and there is metric $d: X \times X \to \CC$ such that the unique arc joining two points is isometric to a closed interval of the real line, then $(X, d)$ is called an $\RR$-tree. We will say that a group $\sG$ acts on an $\RR$-tree $X$ if it acts on it by isometries. 

Observe that this definition is topological in nature since the underlying space is required to be arcwise connected. An alternative route to $\RR$-trees can be taken by defining them as hyperbolic spaces with $\delta = 0$, i.e., every triangle is a \emph{tripod}. These two definitions, although equivalent in spirit, are slightly different. Namely, a tree seen as a discrete set with the edge metric is a $0$-hyperbolic metric space but not an $\RR$-tree in our definition. This is not a problem since trees can still be seen as a subclass of $\RR$-trees by treating them as \emph{simplicial trees}, i.e. the one-dimensional simplicial complexes obtained from the incidence information of the tree. We also have that, given an $\RR$-tree $X$, if the set of points whose complement has three or more connected components is discrete in $X$, then $X$ is a simplicial tree.
The first definition of $\RR$-trees was given by Tits \cite{Tits1977LieKolchin}, then Morgan and Shalen \cite{MorganShalen1984}, following earlier results of Alperin and Moss, drew attention to the theory of $\RR$-trees by showing how to compactify a generalization of Teichmuller space for a finitely generated group using $\RR$-trees. We refer the reader to \cite{Bestvina2002} for more on $\RR$-trees.  
%

We define the following two models for a group acting on a UAC space. Let $\sG \acts X$ be a topological action and $x_0 \in X$ a selected point. We will say that a bounded measurable function $\varphi: X\setminus \{x_0\} \to \CC$ is \emph{constant along arcs} iff $\varphi(x) = \varphi(y)$ if there is an arc connecting $x$ and $y$ inside $X \setminus \{x_0\}$. This is equivalent to decomposing $X \setminus \{x_0\}$ as a union of arcwise connected subsets and imposing the function to be constant over those subsets.

\begin{model} \normalfont
  \label{mod:model1}
  Let $\sG \acts X$ be a topological action on a UAC space, $x_0 \in X$ a point
  and $\widetilde{m}: X \to \CC$ a bounded measurable function such that
  \begin{enumerate}[label={\rm \textbf{(\roman*)}}, ref={\rm {(\roman*)}}]
    \item $\widetilde{m}$, restricted to $X \setminus \{x_0\}$, is constant along arcs.
    \item $\widetilde{m}$, is invariant under the action $\st_{x_0} \acts X \setminus \{x_0\}$.
  \end{enumerate}
  Then, we define the multiplier $m: \sG \to \CC$ as
  \[
    m(g) = \widetilde{m}(g \cdot x_0)
  \]
  and fix $(\sG_0, \sG)$ as $(\st_{x_0}, \sG)$.
\end{model}

This definition has the drawback that the invariance under $\st_{x_0}$ of $m$ can make the symbol constant outside $\st_{x_0}$ in some cases. We introduce the following, more involved, model.

\begin{model} \normalfont
  \label{mod:model2}
  Let us fix two distinct constants $C_1$, $C_2 \in \CC$. Let similarly $\sG \acts X$ be a topological action on a UAC space and $x_0 \in X$ a point. Choose $X_0 \subseteq X \setminus \{x_0\}$ an arcwise connected subset. We define $m: \sG \to \CC$ to be
  \begin{equation*}
    m(g)
    \, = \,
    \begin{cases}
      0   & \mbox{when } \, g \in \st_{x_0} \, \mbox{ and } \, g \cdot X_0 = X_0\\
      C_1 & \mbox{when } \, g \in \st_{x_0} \, \mbox{ and } \, g \cdot X_0 \neq X_0 \\
      C_1 & \mbox{when } \, g \not\in \st_{x_0} \, \mbox{ and } \, g \cdot x_0 \not\in X_0 \\
      C_2 & \mbox{when } \, g \cdot x_0 \in X_0.
    \end{cases}
  \end{equation*}
  We will also fix $\sG_0$ to be $\st_{x_0} \cap \{ g \in \sG : g \cdot X_0 = X_0 \}$.
\end{model}

Observe that Model \ref{mod:model2} is a natural modification of Model \ref{mod:model1} for the function $\widetilde{m}: X \to \CC$ given by $\widetilde{m} = C_1 \1_{X\setminus (\{ x_0\}\cup X_0)} + C_2 \1_{X_0}$. The main difference is that we extend the value of $C_1$ to a portion of the stabilizer. 

\begin{proposition}
  \label{prp:modelCotlar}
  Let $\sG \acts X$ be an action as above.
  \begin{enumerate}[label={\rm \textbf{(\roman*)}}, ref={\rm {(\roman*)}}]
    \item \label{itm:modelCotlar1} Let $m: \sG \to \CC$ and $\sG_0$ be like in Model \ref{mod:model1}. Then, $T_m$ satisfies \eqref{eq:CotlarNC} relative to $\sG_0$ and is left $\VN \sG_0$-modular.
    \item \label{itm:modelCotlar2}Let $m: \sG \to \CC$ and $\sG_0$ be like in Model \ref{mod:model2}. Then, $T_m$ satisfies \eqref{eq:CotlarNC} relative to $\sG_0$ and is left $\VN \sG_0$-modular.
  \end{enumerate}
  The points above imply that $T_m$ in both model \ref{mod:model1} and  \ref{mod:model2} are bounded in $L_p(\VN \sG)$ for $1 < p < \infty$.
\end{proposition}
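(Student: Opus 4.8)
The plan is to reduce both statements to the closed formula \eqref{eq:CotlarFactor} via Theorem \ref{thm:ClosedFormulaCotlar}, and then to run the geometric argument sketched after Theorem \ref{thm:TheoremB}. The first thing I would verify is that in both models $m$ is left $\sG_0$-invariant, so that $T_m$ is left $\L\sG_0$-modular: for Model \ref{mod:model1} this is exactly hypothesis (ii), while for Model \ref{mod:model2} it is a short case check — since $\st_{x_0}$ and $\sG_0$ are subgroups and an element of $\st_{x_0}$ lies in $\sG_0$ precisely when it fixes $X_0$ setwise, one shows for $k\in\sG_0$ that $kg$ falls into the same one of the four defining alternatives as $g$, using that $k$ is a homeomorphism fixing both $x_0$ and $X_0$. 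With left $\sG_0$-invariance in hand, Theorem \ref{thm:ClosedFormulaCotlar} identifies \eqref{eq:CotlarNC} with the factorization identity
\[
  \big(m(g^{-1})-m(h)\big)\big(m(gh)-m(g)\big)=0
  \quad\mbox{ for all } g\in\sG\setminus\sG_0,\ h\in\sG,
\]
and Theorem \ref{thm:TheoremA} then delivers the stated $L_p$-bounds. So the whole proposition reduces to this identity.

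For Model \ref{mod:model1} I would argue exactly as in the paragraph after Theorem \ref{thm:TheoremB}. Fix $g\in\sG\setminus\st_{x_0}$ and $h\in\sG$ with $m(gh)\neq m(g)$, i.e. $\widetilde m(gh\cdot x_0)\neq\widetilde m(g\cdot x_0)$. Here $g\cdot x_0\neq x_0$, and $h\notin\st_{x_0}$ (otherwise $gh\cdot x_0=g\cdot x_0$), so $h\cdot x_0\neq x_0$ as well. Since $\widetilde m$ is constant along arcs in $X\setminus\{x_0\}$, the unique arc joining $g\cdot x_0$ and $gh\cdot x_0$ must pass through $x_0$; applying the homeomorphism $g^{-1}$ gives the unique arc from $x_0$ to $h\cdot x_0$ passing through $g^{-1}\cdot x_0$, and the sub-arc from $g^{-1}\cdot x_0$ to $h\cdot x_0$ avoids $x_0$. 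Hence $g^{-1}\cdot x_0$ and $h\cdot x_0$ sit in a common arc-component of $X\setminus\{x_0\}$, so $m(g^{-1})=\widetilde m(g^{-1}\cdot x_0)=\widetilde m(h\cdot x_0)=m(h)$, which is \eqref{eq:CotlarFactor}.

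For Model \ref{mod:model2} the same arc argument is the backbone, but must be supplemented because $m$ is no longer literally $\widetilde m(g\cdot x_0)$ on the stabilizer. I would fix $g\in\sG\setminus\sG_0$ (so $m(g)\in\{C_1,C_2\}$) and $h$ with $m(gh)\neq m(g)$, and split according to the value of $m(g)$ and then of $m(gh)$. Whenever $g\cdot x_0$ and $gh\cdot x_0$ genuinely lie in two distinct arc-components of $X\setminus\{x_0\}$ (with $X_0$ distinguished), the Model \ref{mod:model1} argument shows $g^{-1}\cdot x_0$ and $h\cdot x_0$ lie in a common component, and since neither is $x_0$ this forces $m(g^{-1})=m(h)$. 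The residual cases are those in which $g$ or $gh$ lands in $\st_{x_0}$: if $gh\in\sG_0$ then $h\cdot x_0=g^{-1}\cdot x_0\neq x_0$, so the two $m$-values agree by $X_0$-membership; if $g\in\st_{x_0}\setminus\sG_0$ — so $g$ permutes the arc-components of $X\setminus\{x_0\}$ and carries $X_0$ off itself — then $m(g)=m(g^{-1})=C_1$, and one shows $m(h)=C_1$ using that $g\cdot X_0\neq X_0$; the symmetric case $gh\cdot x_0=x_0$ again reduces, via $h\cdot x_0=g^{-1}\cdot x_0$, to comparing $X_0$-membership of a single point. Assembling these alternatives yields \eqref{eq:CotlarFactor} for Model \ref{mod:model2}.

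The part I expect to be most delicate is precisely this last case analysis for Model \ref{mod:model2}: the arc-uniqueness argument, which does all the work in Model \ref{mod:model1}, only settles the configurations where the relevant orbit points lie in genuinely different components of $X\setminus\{x_0\}$, and one must check by hand that the \emph{ad hoc} choices in the definition of $m$ — value $C_1$ on $\st_{x_0}\setminus\sG_0$ and value $0$ on $\sG_0$ — are exactly what makes the remaining, degenerate configurations still factor. The one non-formal input needed there is the observation that elements of $\st_{x_0}$ permute the arc-components of $X\setminus\{x_0\}$ and that those outside $\sG_0$ do not fix $X_0$.
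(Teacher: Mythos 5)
Your proposal is correct and follows essentially the same route as the paper: Model \ref{mod:model1} is handled by the arc-through-the-root argument from the introduction, and Model \ref{mod:model2} by checking left $\sG_0$-invariance and then verifying \eqref{eq:CotlarFactor} through a case analysis over the partition of $\sG$ by stabilizer membership and $X_0$-membership (the paper's $\sG_0,\sG_1,\sG_2,\sG_3$), with the same key inputs that stabilizer elements permute the arc-components and that uniqueness of arcs forces the path through $x_0$. The only cosmetic difference is that you argue the contrapositive in the other direction (assuming $m(gh)\neq m(g)$ and deriving $m(g^{-1})=m(h)$, rather than the paper's $m(g^{-1})\neq m(h)\Rightarrow m(gh)=m(g)$), which changes nothing of substance.
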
 

\begin{proof}
  The statement in point \ref{itm:modelCotlar1} has already been proved in the introduction. Thus, we concentrate on point \ref{itm:modelCotlar2}. The fact that $m$ is left invariant under the action of $\sG_0 = \{ g \in \sG : g \cdot x_0 = x_0 \} \cap \{ g \in \sG : g \cdot X_0 = X_0 \}$ is immediate. Now, all we have to do is to verify \eqref{eq:CotlarFactor}. To that end, let us divide the group as a disjoint union $\sG = \sG_0 \cup \sG_1 \cup \sG_2 \cup \sG_3$, where
  \begin{eqnarray*}
    \sG_0 & = & \{ g \in \sG : g \cdot x_0 = x_0 \} \cap \{ g \in \sG : g \cdot X_0 = X_0 \} \\
    \sG_1 & = & \{ g \in \sG : g \cdot x_0 = x_0 \} \cap \{ g \in \sG : g \cdot X_0 \neq X_0 \} \\
    \sG_2 & = & \{ g \in \sG : g \cdot x_0 \neq x_0 \} \cap \{ g \in \sG : g \cdot x_0 \not\in X_0 \} \\
    \sG_3 & = & \{ g \in \sG : g \cdot x_0 \neq x_0 \} \cap \{ g \in \sG : g \cdot x_0 \in X_0 \}.
  \end{eqnarray*}
  Observe also that, since $m$ is both left and right $\sG_0$-invariant,  it is enough to verify \eqref{eq:CotlarFactor} for $g, h \in \sG \setminus \sG_0$. Assume that $m(g^{-1}) \neq m(h)$, otherwise we are done, our aim is to show that $m(g h) = m(g)$. We will proceed by cases. First, assume that $g \in \sG_1$. This is equivalent to $g^{-1} \in \sG_1$ and therefore $h \in \sG_3$. But then, $g \, h \cdot x_0 \not\in X_0$ and $g \, h \cdot x_0 \neq x_0$. Therefore $g \, h \in \sG_2$ and we get $m(g h) = m(g)$. In the case of $g \in \sG_2$ we have the whole range of possibilities and $h$ can belong to $\sG_1$, $\sG_2$ or $\sG_3$. In the case of $g \in \sG_2$ and $h \in \sG_1$ we have that $g h \in \sG_2$. Indeed, $g h \cdot x_0 = g \cdot x_0 \not\in X_0$ and it is immediate that $g h$ is not in the stabilizer of $x_0$. For the second case of $g \in \sG_2$ and $h \in \sG_2$ the condition $m(g^{-1}) \neq m(h)$ implies that $g^{-1} \cdot x_0$ and $h \cdot x_0$ live in distinct {arcwise} connected subsets of $X \setminus \{x_0\}$. But then there is a unique path connecting both points that passes through the root. Applying $g$ to the whole arc gives that $g \cdot x_0$ and $g h \cdot x_0$ lay in the same {arcwise} connected subset and do not stabilize $x_0$, see Figure \ref{fig:UACaction}. Therefore $m(g h) = m(g)$. The third case is given by $g \in \sG_2$ and $h \in \sG_3$. Observe that if $g \in \sG_2$, then $g^{-1}$ can only be inside $\sG_2$ or $\sG_3$. But the case $g^{-1} \in \sG_3$ can be easily discarded since it will contradict the assumption $m(g^{-1}) \neq m(h)$. But if $g^{-1} \in \sG_2$ and $h \in \sG_3$, then $g^{-1} \cdot x_0$ and $h \cdot x_0$ live in distinct {arcwise} connected subsets and we can proceed like in the previous case.
  It remains to check the case of $g \in \sG_3$. We have that $h$ can be in either $\sG_1$, $\sG_2$ or $\sG_3$. In the first case we deduce that $g h \in \sG_3$. For the second one we have that if $g \in \sG_3$ and $h \in \sG_2$, then we can assume that $g^{-1} \in \sG_3$, the only other choice being $g^{-1} \in \sG_2$ which will contradict the assumption $m(g^{-1}) \neq m(h)$. But this implies that $g^{-1} \cdot x_0$ and $h \cdot x_0$ live in different arcwise connected subsets of $X \setminus \{x_0\}$ and we can apply the argument in Figure \ref{fig:UACaction}. Lastly, if $g \in \sG_3$ and $h \in \sG_3$ we obtain similarly that $g^{-1}$ can only lay in $\sG_2$. The same path argument applies. Applying Theorem \ref{thm:TheoremB}, we get the $L_p$-boundedness of $T_m$.
\end{proof}

\begin{remark} \normalfont
    \label{rmk:nonRelativeCaseModel}
    Recall also that, by Remark \ref{rmk:NonrelativeCotlar}, when the subgroup $\st_{x_0}$ has empty interior, condition \ref{itm:Model.2} in Model \ref{mod:model1} can be dropped. 
\end{remark}

Observe that in the case of $\sG = \RR$,  the Hilbert transform is also of weak type $(1,1)$, ie $H:L_1(\RR) \to L_{1,\infty}(\RR)$ and bounded between $L_\infty(\RR)$ and the space of \emph{bounded mean oscillation functions} $\BMO(\RR)$. Both endpoint spaces give ---by either complex or real interpolation with $L_2$--- the optimal order for the operator $L_p$ norm of $H$. The following problem remains open

\begin{problem} \normalfont
  \label{prb:endpoint}
  Let $\sG$ be a group and $m$ a multiplier like in Model \ref{mod:model1} or \ref{mod:model2}. 
  Is it possible to construct spaces $\sX_1$ and $\sX_\infty$, in place of $L_{1,\infty}$ and $\BMO$,
  such that
  \begin{enumerate}[label={\rm \textbf{(\roman*)}}, ref={\rm {(\roman*)}}]
    \item \label{itm:endpoint1}
    $\| T_m: L_1(\VN \sG) \to \sX_1 \| < \infty$ and $\| T_m: L_\infty(\VN \sG) \to \sX_\infty \| < \infty$,
    \item \label{itm:endpoint2}
    interpolation of $\sX_1$ or $\sX_\infty$ 
    with $L_2$ yields growth of $\max\{p, p'\}$ for the operator $L_p$ norm of $T_m$?
  \end{enumerate}
\end{problem}

This problem presents at least three challenges. The first difficulty comes from the fact that weak type $(1,1)$ bounds are difficult to obtain for noncommutative singular integral type operators. There are known in some semicommutative examples \cite{Par2009CZ, CadiCondeParcet2021} but open in the case of Quantum Euclidean spaces \cite{GonJunPar2017singular} and in most group settings beside left-orderable groups \cite{Randrianantoanina1998Hilbert}. The second challenge is that the specific endpoint space $\sX_\infty$ to be used for $m$ in Model \ref{mod:model1} or Model \ref{mod:model2} has to be defined in terms of the geometry of $\sG \acts X$ and it cannot just be the usual noncommutative $\BMO$ space, see \cite{JunMei2012BMO, Mei2008Tent, Mei2012H1BMO}. Indeed, there is a natural \emph{unital and completely positive} semigroup $S_t: \VN \FF_2 \to \VN \FF_2$ in the free group algebra given by $S_t(\lambda_g) = e^{-t|g|}$, see \cite{Haa1978}. This semigroup allows to construct a natural and interpolating semigroup $\BMO$ space $\BMO(\VN \FF_2)$, see \cite{JunMeiPar2014Riesz}. But it is known that the multipliers \eqref{eq:MR} are unbounded from $L_\infty(\VN \FF_2)$ to $\BMO(\VN \FF_2)$, see \cite[Appendix A]{MeiRicardXu2022Free}. The third difficulty comes from the fact that the classical technique, employed by Kolmogorov \cite{Kolmogorov1925Conjugate}, of comparing a singular integral operator with a maximal one is delicate in this context since some of the operators obtained are not positivity preserving, which makes interpolating maximal functions an open problem \cite{JunXu2003}. The technique of Kolmogorov has been used in the noncommutative case in \cite{HongLaiXu2022MaximalSIO}. We will also mention that some tentative progress in the direction of Problem \ref{prb:endpoint} has been made. For instance, Ga{\l}{\c a}zka and Os{\c e}kowski \cite{GalkazkaOsekowski2022} have lowered the operator $L_p$-norm of the Fourier multiplier $m: \FF_2 \to \CC$ that depend on the starting letter from $\sO(p^\beta)$ to $\sO(p \log p)$ as $p \to \infty$.

Now, we are going to study Models \ref{mod:model1} and \ref{mod:model2} in the context of $\RR$-trees. Observe that, since any $\RR$-tree action is an action of the underlying UAC topological space, Proposition \ref{prp:modelCotlar} above works for actions on $\RR$-trees. Furthermore, in the case of group actions on $\RR$-trees we have the following result connecting the existence of global fixed points with the form of the Fourier multiplier $T_m$. We are going to say that the multipliers $m$ coming from Models \ref{mod:model1} and \ref{mod:model2} are \emph{trivial} iff they are constant for any $g \in \sG \setminus \sG_0$.

\begin{proposition}
  \label{prp:FixedPoint}
  Let $X$ be an $\RR$-tree and $\sG \acts X$ an (isometric) action of discrete group. The following holds
  \begin{enumerate}[label={\rm \textbf{(\roman*)}}, ref={\rm {(\roman*)}}]
    \item \label{itm:FixedPoint.1} 
    If the action $\sG \acts X$ has a global fixed point, then for any choice of a root $x_0 \in X$
    the multipliers in Model \ref{mod:model1} and Model \ref{mod:model2} are trivial.
    
    \item \label{itm:FixedPoint.2} 
    If $\sG$ is finitely generated, for any action on an $\RR$-tree $X$, there is either a global fixed point 
    or there exists $x_0 \in X$ such that the corresponding symbol given in Model \ref{mod:model2} is nontrivial. 
  \end{enumerate}
\end{proposition}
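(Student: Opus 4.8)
The plan is to handle the two parts separately, both times reducing to basic facts of the structure theory of isometric actions on $\RR$-trees. For part \ref{itm:FixedPoint.1}, suppose $\sG$ fixes a point $y_0 \in X$. I would fix any root $x_0 \in X$ and show that for every $g \in \sG \setminus \sG_0$ the value $m(g)$ is forced to be the single constant $C_1$. The point is that the unique arc $[x_0, y_0]$ is preserved by $\sG$ up to its fixed endpoint $y_0$: for any $g$, the arc $[g\cdot x_0, y_0] = g\cdot[x_0,y_0]$ also ends at $y_0$, so $g\cdot x_0$ and $x_0$ lie on arcs sharing the terminal segment near $y_0$. If $g \cdot x_0 \neq x_0$, then the arc from $x_0$ to $g\cdot x_0$ is obtained by going from $x_0$ toward $y_0$ until the two arcs $[x_0,y_0]$ and $[g\cdot x_0, y_0]$ first meet, then back out to $g\cdot x_0$; in particular this arc never needs to pass through the root $x_0$ other than at its start, so $g\cdot x_0$ lies in the arcwise connected component of $X\setminus\{x_0\}$ determined by the direction of $y_0$. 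Since that component is a single $X_\beta$, the function $\widetilde m$ (Model \ref{mod:model1}) takes one value there, hence $m$ is constant on $\sG\setminus\sG_0$; for Model \ref{mod:model2} one checks that this forces $g\cdot x_0 \notin X_0$ unless $X_0$ happens to be exactly that distinguished component — and if it is, one may simply re-choose $X_0$, or observe directly from the case analysis that $m$ collapses to $C_1$ on $\sG\setminus\sG_0$. The only subtlety is the degenerate possibility $y_0 = x_0$, in which case $\sG = \sG_0$ and the statement is vacuous; and the possibility that $X_0$ is the distinguished direction toward $y_0$, which I would address by noting the freedom in choosing $X_0$ in Model \ref{mod:model2}.

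For part \ref{itm:FixedPoint.2}, I would invoke the standard dichotomy for a finitely generated group $\sG$ acting by isometries on an $\RR$-tree $X$ (this is classical, e.g.\ in Serre and in the $\RR$-tree literature cited, Bestvina \cite{Bestvina2002}, Morgan--Shalen \cite{MorganShalen1984}): either the action has a global fixed point (is \emph{elliptic}), or there is a hyperbolic element $g_0 \in \sG$ with an axis $A \subset X$ on which $g_0$ acts by a nontrivial translation. (Finite generation is what rules out the intermediate parabolic case where $\sG$ fixes an end but no point; a finitely generated group with no global fixed point contains a hyperbolic element.) In the hyperbolic case I would pick $x_0$ to be any point on the axis $A$ of $g_0$. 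Then $g_0 \cdot x_0$ is a distinct point of $A$, and the arc $[x_0, g_0\cdot x_0]$ is a sub-segment of the axis $A$; in particular it does not pass back through $x_0$, so $g_0 \cdot x_0$ lies in one specific arcwise connected component $X_\beta^+$ of $X \setminus \{x_0\}$ — namely the one in the positive translation direction — while $g_0^{-1}\cdot x_0$ lies in the opposite component $X_\beta^-$, and $X_\beta^+ \neq X_\beta^-$ since otherwise the axis would be a loop, impossible in a tree. Now I would choose $X_0$ in Model \ref{mod:model2} to be $X_\beta^+$ (or rather a component meeting the axis on the positive side, so that $g_0 \cdot x_0 \in X_0$ but $g_0^{-1}\cdot x_0 \notin X_0$). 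Then $m(g_0) = C_2$ while $m(g_0^{-1}) = C_1$ (since $g_0^{-1}\cdot x_0 \notin X_0$ and $g_0^{-1}$ is not in the stabilizer), and $C_1 \neq C_2$ by construction. Hence the symbol is nontrivial, as $g_0, g_0^{-1} \in \sG\setminus\sG_0$ receive different values.

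I expect the main obstacle to be the bookkeeping in part \ref{itm:FixedPoint.1} concerning Model \ref{mod:model2}: because that model artificially extends the value $C_1$ over part of the stabilizer and splits off the distinguished component $X_0$, "trivial" must be read precisely as "constant on $\sG\setminus\sG_0$", and one has to make sure that in the presence of a global fixed point \emph{every} choice of $x_0$ (and the induced freedom in $X_0$) yields this collapse, rather than just some choices. The clean way to organize this is the case analysis already used in the proof of Proposition \ref{prp:modelCotlar}: when $\sG$ fixes $y_0$ and $x_0 \neq y_0$, every $g \in \sG$ with $g\cdot x_0 \neq x_0$ has $g\cdot x_0$ in the single component of $X\setminus\{x_0\}$ containing $y_0$; so as long as $X_0$ is taken to be a \emph{different} component (which is possible whenever $x_0$ has valence $\geq 2$, and when it has valence $1$ the model is degenerate), one gets $m \equiv C_1$ off $\sG_0$. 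Conversely, if $X_0$ is forced to equal the $y_0$-direction — only when $x_0$ has valence exactly $2$ — then $m$ still takes the single value $C_2$ off $\sG_0$, so it is trivial in either case. The rest is routine once the elliptic/hyperbolic dichotomy and the "arc along the axis stays on the axis" observation are in hand; no computation is involved, only the unique-arc property of $\RR$-trees and the elementary geometry of translation axes.
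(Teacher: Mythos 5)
Your strategy for part \ref{itm:FixedPoint.1} is essentially the paper's (an arc argument through the root exploiting the global fixed point), and your part \ref{itm:FixedPoint.2} --- the elliptic/hyperbolic dichotomy for finitely generated groups, a root on the axis of a hyperbolic element $g_0$, and $X_0$ the component containing $g_0\cdot x_0$ --- is correct and in fact supplies an argument that the paper's written proof does not spell out. However, part \ref{itm:FixedPoint.1} as you wrote it has a genuine gap: the decisive claim is that $g\cdot x_0$ lies in the component of $X\setminus\{x_0\}$ containing the fixed point $y_0$, equivalently that $x_0\notin[y_0,\,g\cdot x_0]$, i.e.\ that the arcs $[x_0,y_0]$ and $[g\cdot x_0,y_0]$ meet at a point different from $x_0$. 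Your justification --- that the arc $[x_0,g\cdot x_0]$ "never passes through the root other than at its start" --- is true of every arc emanating from $x_0$ and does not yield the claim: if the two arcs met exactly at $x_0$, then $[x_0,g\cdot x_0]$ would leave $x_0$ away from $y_0$ and $g\cdot x_0$ would sit in a different component. Ruling this out is precisely where the isometry hypothesis must enter, and your argument never invokes it: since $g$ is an isometry fixing $y_0$ one has $d(y_0,g\cdot x_0)=d(y_0,x_0)$, whereas $x_0\in[y_0,g\cdot x_0]$ would force $d(y_0,g\cdot x_0)=d(y_0,x_0)+d(x_0,g\cdot x_0)>d(y_0,x_0)$. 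Without isometry the statement is simply false: the homeomorphism $x\mapsto 2x$ of $X=\RR$ fixes $y_0=0$, yet for $x_0=1$ it sends the root into the component not containing $y_0$ while its inverse does the opposite, so the Model \ref{mod:model1} symbol is nontrivial. The paper's proof uses exactly this metric ingredient (pushing the arc through the root by $g$ would strictly lengthen it).

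Two smaller points concern Model \ref{mod:model2} in part \ref{itm:FixedPoint.1}. You cannot "simply re-choose $X_0$": the statement asserts triviality for every admissible choice, so the case where $X_0$ is the component containing $y_0$ must be handled as such, and it can occur at any valence, not only valence $2$. In that case your assertion that $m\equiv C_2$ off $\sG_0$ needs one more observation: an element of $\st_{x_0}\setminus\sG_0$ would receive the value $C_1$, so you must check that no such element exists; this holds because any $g$ fixing both $x_0$ and $y_0$ permutes the components of $X\setminus\{x_0\}$ and preserves the one containing $y_0$, so $g\cdot X_0=X_0$ and $\st_{x_0}=\sG_0$. With these two repairs (the isometry step and this stabilizer observation) your proof is complete and follows essentially the same geometric route as the paper's for (i), while adding the standard dichotomy argument for (ii).
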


\begin{proof}
  We will prove first \ref{itm:FixedPoint.1} for $m$ as in Model \ref{mod:model1}. Assume that the actions has a global fixed point $x_1 \in X$. If $x_1$ coincides with the root $x_0$ then $m = 0$ and there is nothing to prove. Therefore we assume $x_0 \neq x_1$. Similarly, we can assume without loss of generality that $\st_{x_0}\neq \sG$, since otherwise $m = 0$. Pick $g \in \sG$ and assume that $g \cdot x_0$ lays in an arcwise connected subset of $X \setminus \{x_0\}$ different from that of $x_1$. Then, there is a unique path joining $x_1$ and $g \cdot x_0$ that passes through the root $x_0$, see Figure \ref{fig:FixedPoint1}. But since $x_1$ is fixed by any $g\in \sG$, after applying $g$ to the path we obtain a larger path, which contradict the fact that the action is isometric. 
  \begin{figure}[ht]
    \centering
    \includegraphics[scale=1]{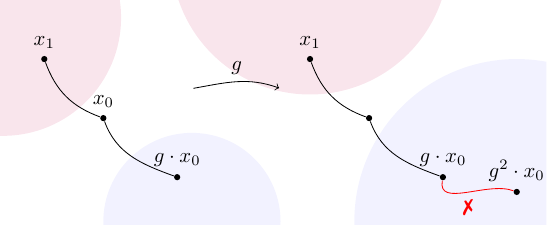}
    \caption{The action of $g$ over the path connecting the global fixed point and $g \cdot x_0$.}
    \label{fig:FixedPoint1}
  \end{figure}
This implies that $g^2 \cdot x_0 = g \cdot x_0$ and so $g \cdot x_0 = x_0$, which contradicts the assumptions. Therefore, $g \cdot x_0$ belongs to the same connected subset of $x_1$ for every $g$ that do not stabilize the root $x_0$, that is, $m(g)$ is constant for any $g\in \sG\setminus \st_{x_0}$.

For the case of $m$ as in Model \ref{mod:model2}, let $x_1 \in X$ be a global fixed point. We can again consider without loss of generality that $x_0 \neq x_1$ and that $\st_{x_0}\neq \sG$. By those assumptions, there exists  $g_0 \in \sG$ with $g_0 \cdot x_0 \in X \setminus \{x_0\}$. We have two possibilities for $g_0$. If $g_0 \cdot x_0 \neq x_0$ and $g_0 \cdot x_0 \not\in X_0$, then $m(g_0) = C_1$, while if $g_0 \cdot x_0 \in X_0$ then $m(g_0) = C_2$. If we are in the first case $g_0 \cdot x_0 \not\in X_0$, then the multiplier $m$ will be trivial unless there exists a $g_1 \in \sG$ such that $g_1 \cdot x_0 \in X_0$. Let us obtain a contradiction. First, we claim that the global fixed point $x_1 \not\in X_0$. Assume $x_1 \in X_0$. Then, there is a path joining $x_1$ and $g_0 \cdot x_0$ that passes through the root $x_0$, see Figure \ref{fig:FixedPoint2}
  \begin{figure}[ht]
    \centering
    \includegraphics[scale=1]{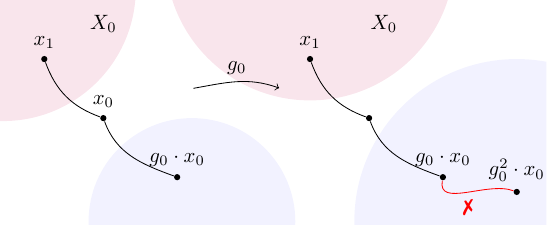}
    \caption{The action of $g_0$ over the path connecting $x_1$ and $g_0 \cdot x_0 \not\in X_0$.}
    \label{fig:FixedPoint2}
  \end{figure}
But applying $g_0$ to the path gives that $g_0 \cdot x_0 = x_0$ which is a contradiction. 
Therefore $x_1 \not\in X_0$, But since $g_1 \cdot x_0 \in X_0$, we can build a path from $x_1$ to $g_1 \cdot x_0$ that passes through the root $x_0$ and, repeating the same argument as before, obtain that $g_1 \cdot x_0 = x_0$, which is a contradiction. 
For the second case $g_0 \cdot x_0 \in X_0$. First, we notice that $x_1$ must live in $X_0$, if not, proceeding as before we will get that $g_0 \cdot x_0 = x_0$, which is a contradiction. In order for $m$ to be nontrivial there should be a $g_1 \in \sG$ such that either $g_1 \cdot x_0 = x_0$ and $g_1 \cdot X_0 \neq X_0$ or $g_1 \cdot x_0 \neq x_0$ and $g_1 \cdot x_0 \not\in X_0$. For the first case, let us choose a point $x_2 \in X_0$. Then, $g_1 \cdot x_2 \not\in X_0$ and similarly $g_1 \cdot x_2 \neq x_0$. Now, construct a path joining $x_1$ with $g_1 \cdot x_2$. Since $x_1 \in X_0$ but $g_1 \cdot x_2$ belongs to a different connected subset, the arc joining them passes through the root $x_0$. Let us apply $g_1^{-1}$ to the whole arc. Since we have that $g_1^{-1} \cdot x_0 = x_0$, we obtain an arc that joins $x_1$ and $x_2$ and passes through the root $x_0$. But this is a contradiction with the fact that $x_1$ and $x_2$ both belong to $X_0$, see Figure \ref{fig:FixedPoint3}, since $X$ is a UAC space the arc joining two elements in the same connected subset of $X \setminus \{x_0\}$ cannot pass trough the root $x_0$.
  \begin{figure}[ht]
    \centering
    \includegraphics[scale=1]{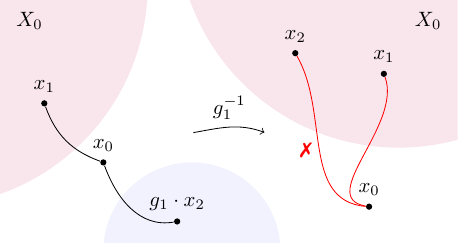}
    \caption{The action of $g_1^{-1}$ over the path connecting $x_1$ and $g_1 \cdot x_2$.}
    \label{fig:FixedPoint3}
  \end{figure}
  The remaining case is when $g_1 \cdot x_0 \neq x_0$ and $g_1 \cdot x_0 \not\in X_0$. Then, the arc joining $x_1 \in X_0$ and $g_1 \cdot x_1$ passes through $x_0$. Applying $g_1$ gives a contradiction with the fact that the action is isometric and that $g_1 \cdot x_0 \neq x_0$. For the other point, if $\sG$ is finitely generated, the converse statement is also true and it is a corollary of \cite[Lemma 5.2]{Bowditch1998}. This lemma tells us that when a finitely generated group $\sG$ acts non-trivially on an $\RR$-tree $X$, there exist a point $y_0$ and elements $g, h \in \sG$ such that $y_0$ lies in the arc between $g \cdot y_0$ and $h \cdot y_0$ and $y_0$, $g \cdot y_0$ and $h \cdot y_0$ are all different. Therefore, if $\sG$ does not have property (F$\RR$), we can choose $x_0$ as $y_0$ and $X_0$ as the maximally connected subset of $g \cdot y_0$. Then, there is an $h \in \sG$ such that $h \cdot x_0 \neq x_0$ and $h \cdot x_0 \not\in X_0$. Thus, sending $g$ and $h$ to different values $C_1$ and $C_2$ gives a nontrivial multiplier within Model \ref{mod:model2}.
\end{proof}

Many groups we are familiar with admit actions on $\RR$-trees. For example, every finitely generated hyperbolic group. Indeed, a finitely generated group is hyperbolic if and only if every asymptotic cone of the group is an $\RR$-tree \cite{Gromov1993}. Then, the action of a hyperbolic group on its Cayley graph induces an action on the asymptotic cone of the Cayley graph. Moreover, any surface group having Euler characteristic less than $-1$ acts freely on an $\RR$-tree \cite{MorganShalen1991}.

On the other hand, there are many examples of groups for which any action on an $\RR$-tree has a global fixed point. When this happens the group $\sG$ is said to have property {(F$\RR$)}, see \cite{Bestvina2002, Shalen1987dendrology}. The following corollary of Proposition \ref{prp:FixedPoint} characterizes groups with property {(F$\RR$)}.

\begin{corollary}
  $\sG$ has property {\rm (F$\RR$)} if and only if the symbol $m$ in Model \ref{mod:model2} is trivial for any $x_0 \in X$ in any $\RR$-tree $X$ on which $\sG$ acts (isometrically).  
\end{corollary}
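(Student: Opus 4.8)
The plan is to obtain the corollary directly from Proposition~\ref{prp:FixedPoint}, whose two parts are precisely the two implications; the only thing that needs attention is the finite-generation hypothesis appearing in part~\ref{itm:FixedPoint.2}.

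For the implication ``property {\rm (F$\RR$)} $\Rightarrow$ the symbol is always trivial'' I would argue as follows. Let $\sG$ act isometrically on an $\RR$-tree $X$, fix any root $x_0 \in X$, and fix any arcwise connected $X_0 \subset X \setminus \{x_0\}$ as in Model~\ref{mod:model2}. Since $\sG$ has property {\rm (F$\RR$)}, this action has a global fixed point, so Proposition~\ref{prp:FixedPoint}\ref{itm:FixedPoint.1} shows that the Model~\ref{mod:model2} symbol attached to $(x_0, X_0)$ is trivial. As the action and the data $(x_0, X_0)$ were arbitrary, every such symbol is trivial.

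For the converse I would argue by contraposition. Suppose $\sG$ does not have property {\rm (F$\RR$)}; then there is an isometric action $\sG \acts X$ on some $\RR$-tree with no global fixed point. Applying Proposition~\ref{prp:FixedPoint}\ref{itm:FixedPoint.2} to this action, the global-fixed-point alternative being excluded, we obtain a root $x_0 \in X$ for which the associated Model~\ref{mod:model2} symbol is nontrivial; hence it is not the case that all Model~\ref{mod:model2} symbols are trivial, which is the contrapositive of the desired statement.

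I expect the finite-generation hypothesis of Proposition~\ref{prp:FixedPoint}\ref{itm:FixedPoint.2} to be the only delicate point, and it causes no real trouble: property {\rm (F$\RR$)} is enjoyed only by finitely generated groups. Indeed it implies Serre's property $(\mathrm{FA})$ — a simplicial tree, metrized with unit edge lengths, is an $\RR$-tree, and an isometric action fixing a point of its realization fixes a vertex of a barycentric subdivision — and a countable group with $(\mathrm{FA})$ is finitely generated by Serre's theorem \cite{Serre1980Trees}. Thus the corollary is to be understood for finitely generated $\sG$, and in that range Proposition~\ref{prp:FixedPoint}\ref{itm:FixedPoint.2} applies verbatim in the contrapositive step above; beyond this bookkeeping there is nothing further to prove.
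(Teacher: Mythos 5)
Your proof is correct and is essentially the paper's own argument: the corollary is presented there as an immediate consequence of Proposition \ref{prp:FixedPoint}, with part \ref{itm:FixedPoint.1} giving the implication from property {\rm (F$\RR$)} to triviality and part \ref{itm:FixedPoint.2}, used in contrapositive form, giving the converse. Your observation that the finite-generation hypothesis of part \ref{itm:FixedPoint.2} is harmless because property {\rm (F$\RR$)} forces finite generation (via property $(\mathrm{FA})$ and Serre's theorem), so that the equivalence is read for finitely generated $\sG$, matches the implicit reading of the statement in the paper.
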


One natural question, that we leave open, is whether there are functions $m: \sG \to \CC$ satisfying \eqref{eq:NCCotlar} for $\sG_0 = \{e\}$ and such that they do not lift to a function $\widetilde{m}$ on a UAC space like in Model \ref{mod:model1} or \ref{mod:model2} on which $\sG$ acts. One way to prove the non-existence of such lift $\widetilde{m}$ would be a procedure to assemble from the group $\sG$ and the function $m$ a $\sG$-space $X$ on which $\sG$ acts naturally and a lift $\widetilde{m}: X \to \CC$ satisfying hypothesis like those of Models \ref{mod:model1} and \ref{mod:model2}. So far, this reverse construction has escaped us. 

\section{Left orderable groups \label{S:LOG}}

Recall that a \emph{total} or \emph{linear} order $\preceq$ is an order relation such that, given any two points $x$, $y$ then it holds that $x \preceq y$ or $y \preceq x$.
A \emph{left-orderable group} is a group admitting a left-invariant total order, ie $(\sG, \preceq)$ satisfies that $g \preceq h$ if and only if $k g \preceq k h$. Recall that, as we defined in the introduction,  every left-orderable group has a \emph{sign function} $\sgn: \sG \to \CC$ that assigns $+1$ or $-1$ depending on whether $e \prec g$ or $g \prec e$. We will prove that $H = T_\sgn$ is bounded.

\begin{proof}[Proof (of Theorem \ref{thm:TheoremC})]
  First, assume that $\sG$ is discrete. All that is required to do is to prove 
  the identity \eqref{eq:CotlarFactor} relative to $\sG_0 = \{e\}$. Assume that $\sgn(g) \neq \sgn(g h)$, where both $g$ and $h$ are different from $e$. If both $g$ and $h$ had the same sign, so would $g h$, therefore the sign of $h$ has to be different from that of $g$. But that implies that the signs of $g$ and $h^{-1}$ coincide. To see that the norm grows as $\sO(p)$ as $p\to \infty$ just notice that $\sgn(g^{-1}) = - \sgn(g)$ and therefore $m(g) \overline{m(g^{-1})} = -1$, which allows to apply the same technique in Proposition \ref{prp:ExtrapolationIdempotent} and Corollary \ref{cor:OptConstantGroup}. 
  The case of continuous groups follows similarly using the nonrelative Cotlar identity in \eqref{eq:CotlarNoExp}.
\end{proof}

Apart from the direct proof above, it is interesting to notice that these Hilbert transform type multipliers on left-orderable groups can be put into the framework of Model \ref{mod:model1} and Theorem \ref{thm:TheoremB}. To do that we need to use the well-known characterization of countable left-orderable groups as order preserving groups of homeomorphisms of the real line $\RR$, which is a UAC space. The first reference we found for this is \cite{Holland1963}. 

\begin{proposition}
  \label{prp:LOGactonR}
  Every countable left-orderable group acts on the real line $\RR$ 
  by orientation preserving homeomorphisms and without global fixed point. 
\end{proposition}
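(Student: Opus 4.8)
The plan is to build the classical \emph{dynamical realization} of the left order, in the spirit of \cite{Holland1963}. We may assume $\sG \neq \{e\}$ (the trivial group fixes every point, so there is nothing to prove for it), and since a nontrivial left-orderable group is torsion-free, it is then countably infinite. First I would fix an enumeration $\sG = \{g_0 = e, g_1, g_2, \dots\}$ and define an order-embedding $t \colon (\sG, \preceq) \to (\RR, \leq)$ by recursion: put $t(g_0) = 0$; having placed $t(g_0), \dots, t(g_{n-1})$, set $t(g_n) = \max_{i<n} t(g_i) + 1$ if $g_n \succ g_i$ for all $i < n$, set $t(g_n) = \min_{i<n} t(g_i) - 1$ if $g_n \prec g_i$ for all $i<n$, and otherwise let $t(g_n)$ be the midpoint of $t(g_i)$ and $t(g_j)$, where $g_i, g_j$ are the immediate $\preceq$-neighbours of $g_n$ inside $\{g_0, \dots, g_{n-1}\}$. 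By construction $t$ is strictly increasing, hence injective.

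\textbf{The action.} Next, for each $g \in \sG$ I would define $\phi_g \colon \RR \to \RR$ encoding left translation by $g$. On the orbit $D := t(\sG)$, set $\phi_g(t(h)) := t(gh)$; since $h \mapsto gh$ is an $\preceq$-automorphism of $\sG$, this is an order-isomorphism of $D$ onto $D$, so it extends uniquely to an increasing homeomorphism of the closure $C := \overline{D}$ onto itself. As $C$ is closed, $\RR \setminus C$ is a disjoint union of open intervals with endpoints in $C$, and $\phi_g$ carries each gap $(a,b)$ onto the gap $(\phi_g(a), \phi_g(b))$ (if a point of $C$ lay strictly between $\phi_g(a)$ and $\phi_g(b)$, its $\phi_g^{-1}$-image would lie in $(a,b) \cap C = \emptyset$); I would then extend $\phi_g$ over each gap by the unique increasing affine bijection onto its image. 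The verifications to carry out are: (i) each $\phi_g$ is a well-defined orientation-preserving homeomorphism of $\RR$; (ii) $g \mapsto \phi_g$ is a homomorphism into $\Homeo^+(\RR)$; (iii) it is injective. For (ii), $\phi_{gh} = \phi_g \circ \phi_h$ holds on $D$ by associativity, extends to $C$ by continuity, and on a gap it follows because a composite of increasing affine bijections of intervals is the unique such bijection between the relevant endpoints. For (iii), $\phi_g = \Id$ forces $t(g) = \phi_g(t(e)) = t(e) = 0$, hence $g = e$.

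\textbf{No global fixed point.} I would first observe that $(\sG,\preceq)$ has neither a maximum nor a minimum: a minimal element $m$ would satisfy $m \preceq e$, hence $m^2 \preceq m$, forcing $m = e$; but then no $g$ satisfies $g \prec e$, contradicting that $e \prec g$ implies $g^{-1} \prec e$ for any $g \neq e$ (the argument for a maximum is symmetric). Consequently infinitely many $g_n$ are new $\preceq$-minima in the recursion — otherwise, if none occurs after stage $N$, the $\preceq$-least of $g_0, \dots, g_N$ would be the minimum of $\sG$ — so $t(g_n) = \min - 1$ infinitely often and $D$ is unbounded below; likewise $D$ is unbounded above. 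Now suppose $x \in \RR$ were fixed by every $\phi_g$. Since $\phi_g$ is strictly increasing and $\phi_g(0) = t(g)$, if $x < 0$ then for every $g \prec e$ we get $x = \phi_g(x) < \phi_g(0) = t(g)$, while $t(g) > 0 > x$ for $g \succ e$; thus $x < t(g)$ for all $g \in \sG$, contradicting unboundedness below. The case $x > 0$ is symmetric, and $x = 0$ is excluded because $\phi_g(0) = t(g) \neq 0$ for $g \neq e$. Hence the action has no global fixed point.

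\textbf{Main obstacle.} The delicate point is the construction in the second step: checking that the continuous extension of $\phi_g$ to $\overline{D}$, glued to the affine extensions over the gaps, genuinely yields a homeomorphism of $\RR$ and is compatible with composition. Once this is settled, the order-theoretic facts about $(\sG,\preceq)$ and the fixed-point argument are short.
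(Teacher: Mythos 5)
Your route is the classical dynamical realization of the left order, which is genuinely different from the paper's proof: the paper embeds the order into $\QQ$ by passing to $\sG\times\QQ$ with the lexicographic order and invoking Cantor's theorem that a countable, dense, unbounded total order is isomorphic to $(\QQ,\leq)$; the group then acts by order automorphisms of $\QQ$ itself, and because the invariant set is all of $\QQ$ (dense in $\RR$, no gaps) the sup-extension $\widetilde{\varphi}_g(x)=\sup\{\varphi_g(q):q\leq x,\ q\in\QQ\}$ is automatically a strictly increasing continuous bijection of $\RR$, with no interpolation step to justify. Your construction instead acts on the non-dense orbit $D=t(\sG)$, and precisely at the point you flag as the main obstacle there is a genuine gap: the assertion that $\phi_g|_D$ \emph{extends uniquely to an increasing homeomorphism of $C=\overline{D}$ onto itself} is not a formal consequence of $\phi_g$ being an order isomorphism of $D$ onto $D$. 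An order isomorphism of countable subsets of $\RR$ can collapse a gap of the closure to a two-sided accumulation point (already $\{-1-\tfrac1n\}\cup\{1+\tfrac1n\}$ and $\{-\tfrac1n\}\cup\{\tfrac1n\}$ are order isomorphic, and one can arrange such configurations inside a single set so that a self-isomorphism interchanges them), or dually open a gap at a two-sided limit point; in either case no monotone extension to the closure is both injective and continuous, and your ``gaps go onto gaps'' and affine-interpolation steps break down. So the crux is not merely delicate bookkeeping: it is a statement that is false for general $(D,\phi_g)$ and must be derived from the specific midpoint/$\pm1$ placement rule, which your proposal never uses beyond monotonicity of $t$.

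The missing lemma is: for a cut $\sG=A\sqcup B$ with every element of $A$ below every element of $B$ and both parts nonempty, one has $\sup t(A)<\inf t(B)$ if and only if $A$ has a $\preceq$-maximum and $B$ has a $\preceq$-minimum. Indeed, once one element of each of $A$ and $B$ has been placed, every later arrival of a new $\preceq$-largest element of $A$ (or new $\preceq$-smallest element of $B$) is inserted at the midpoint between the current rightmost placed point of $t(A)$ and leftmost placed point of $t(B)$, halving their distance; hence the two sides stay a positive distance apart iff both update only finitely often, i.e. iff $A$ has a maximum and $B$ has a minimum. Since left translation preserves cuts with, and cuts without, such extrema, gaps of $\overline{D}$ correspond exactly to $\preceq$-consecutive pairs and are carried to gaps, while two-sided limit points are carried to two-sided limit points; this is what makes the extension over $\overline{D}$, glued with the affine maps over the complementary intervals, a well-defined increasing bijection of $\RR$ (hence a homeomorphism) compatible with composition. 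With that lemma supplied, the rest of your argument is correct: the non-existence of extrema in $(\sG,\preceq)$, the resulting unboundedness of $D$ in both directions, and the case analysis $x<0$, $x=0$, $x>0$ for the fixed point all check out (your fixed-point argument differs harmlessly from the paper's, which shows every point is moved upward by some group element), with the shared understanding that the statement is read for nontrivial $\sG$.
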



Observe as well that, given a homeomorphism of the real line $f: \RR \to \RR$ it is either orientation preserving or orientation reversing and that the composition of two orientation reversing maps is orientation preserving. Therefore, there is a group homomorphism $\Homeo(\RR) \to \ZZ_2 \cong \{1,-1\}$ that associate each homeomorphism of $\RR$ with $+1$ or $-1$ depending on whether the homeomorphism is orientation preserving or reversing. As a consequence each discrete group $\sG$ acting on $\RR$ has an index $2$ subgroup that is left-orderable. So, when the UAC space $X$ in Model \ref{mod:model1} is equal to $\RR$, the group $\sG$ is left-orderable and have an index $2$ subgroup in which the multiplier $m$ essentially coincides with the sign function.

The left invariant order of a left-orderable group is by no means unique. Thus, the sign functions associated to different orders may give different $L_p$-bounded Fourier multiplier. Similarly, left-orderable groups can have actions on UAC spaces other than $\RR$ which will yield different multipliers still within our Model \ref{mod:model1}. An example of this will be that of Baumslag-Solitar groups $\BS(1,n)$, which both admit nontrivial actions on their Bass-Serre trees and are left-orderable.



\textbf{Left orderable groups and subdiagonal algebras.}
Finally, let us mention that the Hilbert transform $H=T_\sgn: L_p(\VN \sG) \to L_p(\VN \sG)$ of a left-orderable group can be considered as a particular case of the Hilbert transforms associated with \emph{subdiagonal algebras} which have been studied in \cite{Randrianantoanina1998Hilbert}, see also \cite[Section 8]{PiXu2003}. Such algebras were introduced by Arveson \cite{Arveson1967Analytic}. Indeed, let $(\M, \tau)$ be a von Neumann algebra with a n.s.f trace and let $\EE: \M \to \D \subseteq \M$ be a $\tau$-preserving conditional expectation onto a von Neumann subalgebra $\D \subseteq \M$. A \emph{finite subdiagonal algebra} $\H^\infty(\M) \subseteq \M$ with respect to $\EE$ is a weak-$\ast$ closed, non self-adjoint algebra such that
\begin{enumerate}[leftmargin=1.25cm,label={\rm \textbf{(\roman*)}}]
  \item $\EE(f \, g) = \EE(f) \, \EE(g)$,
  \item $\displaystyle \big\{f + g^\ast : f, g \in \H^\infty(\M)  \big\} = \M$ and 
  \item $\H^\infty(\M)  \cap (\H^\infty(\M) )^\ast = \D$.
\end{enumerate}
The notation comes from the fact that, for the algebra $L_\infty(\TT)$ and the integral as expectation, the Hardy space $\H^\infty(\TT)$ gives a finite subdiagonal algebra. Other examples are the $n \times n$ upper triangular matrices, which are a finite subdiagonal algebra with respect to the diagonal subalgebra $\ell_\infty^n \subseteq M_n(\CC)$, and more generally the Nest algebras \cite{Davidson1988Nest} of totally ordered families of projections. In this setting any $f \in \M$ admits a unique decomposition as
\[
  f = g + \delta + h^\ast, \quad \mbox{ with } g, h \in {\H_0^\infty(\M)}, \,\, \delta \in \D,
\] 
where $\H_0^\infty (\M) = \{f\in \H^\infty(\M): \EE(f)=0\}$.
The Hilbert transform associated to the finite subdiagonal algebra $\H^\infty (\M)$ is thus $H(f) = -i g + i h^\ast$. For these family of operators Randrianantoanina \cite{Randrianantoanina1998Hilbert} proved their weak type $(1,1)$, which after interpolation gives the optimal constant in terms of $p$. In the case of a discrete left-orderable group $\VN \sG$ with the subalgebra $\CC \1 \subseteq \VN \sG$ and the conditional expectation given by $f \mapsto \tau(f) \1$, we have that
\[
  \H^\infty_0 (\VN \sG)
  \, = \,
  \bigg\{ f = \sum_{e \prec g} \widehat{f}(g) \, \lambda_g  : f \in \VN \sG \bigg\}
\]
is a finite subdiagonal algebra whose Hilbert transform coincides with the multiplier $-i T_\sgn$ in Theorem \ref{thm:TheoremC}. Thus, by Corollary \ref{cor:OptConstantGroup}, Cotlar identities can be used to recover previously known results with optimal constant.

\section{Multipliers from Bass-Serre theory \label{S:BS}}
The theory of discrete groups acting on trees (without edge inversions) is very well understood due to the work of Serre. The interested reader can find more on this topic in \cite{Serre1980Trees}. Here we are going to briefly explain how it is possible to use this theory to build examples of multipliers satisfying Cotlar's identity and to understand  previously known examples like Free group multipliers from \cite{MeiRicard2017FreeHilb, MeiRicardXu2022Free} under a geometric lens.


Given a graph of groups $\XX$, that we will assume connected, its \emph{fundamental group} $\pi_1(\XX)$ can be defined in two ways, with the second one being more handy when constructing the Bass-Serre tree of $\XX$.
\begin{enumerate}[leftmargin=1.25cm, label={\rm \textbf{(D.\roman*)}}, ref={\rm (D.\roman*)}]
  \item \label{itm:DefFundamentalG1}
  The first construction, denoted by $\pi_1(\XX, x_0)$ and briefly explained in the introduction, consists of the subgroup of $F(\XX)$ given by words of type $c$, where $c$ is a closed path in $X$ that starts and ends at the base point $x_0 \in \Vrt(X)$. 
  
  \item \label{itm:DefFundamentalG2}
  For the second construction fix a \emph{spanning tree} $T \subseteq X$, that is a tree containing all vertices of $X$.
  The fundamental group $\pi_1(\XX;T)$ can be constructed as
  \[
    \pi_1(\XX ; T) 
    \, = \,
    F(\XX) / \llangle[\big] y : y \in \Edg(T) \rrangle[\big],
  \]
  where $\llangle y : y \in \Edg(T) \rrangle$ is the normal subgroup generated 
  by all the edges $y$ in the spanning tree.
\end{enumerate}
The first definition is independent on the choice of $x_0$, while the second is independent of the choice of spanning tree $T$, see \cite{Serre1980Trees}. Both are isomorphic and we will usually denote the resulting group just by $\pi_1(\XX)$. We are going to sketch why they are equal. All we have to see is that the canonical projection
\begin{equation}
  \label{eq:quotientPi}
  p: F(\XX) \longto \pi_1(\XX;T)
\end{equation}
restricts to an isomorphism of $\pi_1(\XX,x_0) \subseteq F(\XX)$ onto $\pi _1(\XX;T)$. The injectivity of the restricted map is clear, since no nontrivial closed loop can be fully contained in the spanning tree. To see the surjectivity let us construct a partial inverse $\phi: \pi_1(\XX;T) \to \pi_1(\XX,x_0)$.
Recall that for every point $x \in \Vrt(X)$ there is a unique path starting from $x_0$ and ending in $x$ contained within the spanning tree $T$. Let us denote by $\gamma_x \in F(\XX)$ the element associated to the path $\gamma_x = y_1 \, y_2 \cdots y_r$  with $o(y_1)=x_0$ and $t(y_r)=x$. If $y \in \Edg(X)$ we define $\phi$ as
\[
  \phi(y) = 
  \begin{cases}
    e & \mbox{ when } y \in \Edg(T)\\
    \gamma_{\ori(y)} \, y \, \gamma_{\tar(y)}^{-1} & \mbox{  when } y \not\in \Edg(T).    
  \end{cases}
\]
Observe that in both cases we obtain words associated to a closed path starting with $x_0$ like in \eqref{eq:WordofPathType}. Similarly, if $g \in \sG_x$ for a vertex $x \in \Vrt(X)$, then we define $\phi$ as
$\phi(g)  = \gamma_x \, g \, \gamma_x^{-1}$.
Straightforward computations give the desired properties of the partial inverse. Recall from the introduction that a word $g$ of type $c$ is in \emph{normal form} if, informally, it can not be shortened applying \eqref{eq:ExtraRelations}. More formally
\begin{itemize}
    \item Given a word $g = |c,r|$ of type $c$, it is in \emph{normal form} if either $m = 0$ and $r_0 \neq e$ or, in the case in which $m \geq 1$, it holds that for every $1 \leq j \leq m - 1$, $r_j \not\in \sH_{y_j}^{y_j}$ when $y_{j+1} = {\overline{y}_j}$. 
    \item Let $|c,r|$ and $|c,\mu|$ be two words of type $c$ with $r = (r_0, \, r_1, \, \dots, r_m)$ and $\mu = (\mu_0, \, \mu_1, \, \dots, \mu_m)$. They are said to be \emph{equivalent} if $\mu_0 = r_0 a_1^{\bar y_1}$ and $\mu_j=(a_j^{y_j})^{-1} r_j a_{j+1}^{\bar y_{j+1}}$, where $a_j \in \sH_{y_j}$ and $a_j^{y_j}$ and $a_j^{\bar{y}_j}$ are the corresponding images in $\sH_{y_j}^{y_j}$ and $\sH_{\bar{y_j}}^{\bar{y_j}}$.
\end{itemize} 
We have that two words of type $c$ in normal form are equivalent if and only if they represent the same element in $\pi_1(\XX, x_0)$. Observe also that any $g \in \pi _1 (\XX, x_0)$ admits a unique normal form up to equivalence. 

The fundamental group of a graph of groups unifies several constructions. 
\begin{itemize}
  \item \textbf{Homotopy groups of $\mathbf{1}$-dimensional simplicial complexes.}
  Any graph $X$ is naturally a graph of groups by putting the trivial group in each vertex and edge. In this case $\pi_1(\XX)$ is naturally isomorphic to the (usual) fundamental group $\pi_1(X)$ of the associated $1$-dimensional simplicial complex, see \cite[Chapter 11]{Rotman1999groupsBook}.  
  
  \item \textbf{Amalgamated free products.} 
  Let $X$ be a connected graph with two vertices $x_1$ and $x_2$ associated to groups $\sG_1$ and $\sG_2$ and connected by a single edge $y$ to which a common subgroup $\sH$ of $\sG_1$ and $\sG_2$ is attached. Then, the fundamental group of this graph of groups is the amalgamated free product $\pi_1(\XX) = \sG_1 \ast_\sH \sG_2$, see Figure \ref{fig:FreeGroupGoG}, and when $\sH = \{e\}$ the usual free product is obtained.
    
  \item \textbf{HNN extensions.} Let $\sG = \langle S \, | \, R \rangle$ be a finitely presented group and $\sH_1$ and $\sH_2$ two isomorphic subgroups $\sH_1, \, \sH_2 \subseteq \sG$. Fix an isomorphism $\theta: \sH_1 \to \sH_2$. The \emph{HNN} extension of $\sG$ relative to $\alpha$, see \cite{Rotman1999groupsBook}, is the smallest extension of $\sG$ such that $\theta$ is implemented by an inner automorphism, i.e. $\theta(h) = t \, h \, t^{-1}$, for some $t$ and every $h \in \sH_1$. In the case of a finitely presented group, its HNN extension is given by
  \begin{equation}
    \label{eq:HNNDef}
    \big\langle S, \, t \, {\big|} \, R, \; t \, h \, t^{-1} = \theta(h) \, \big\rangle,
  \end{equation}
  where $h$ runs trough $\sH_1$. This construction can be obtained as the fundamental group of a graph of groups with a single vertex with group $\sG$ and an edge $y$ whose associated group is $\sH_1$ and the two inclusions are given by $\alpha_{y}(h) = h$ and $\alpha_{\bar{y}}(h) = \theta(h)$, see Figure \ref{fig:HNNGoG}.
\end{itemize}
\begin{figure}[t]
  \centering
  \begin{minipage}{.5\textwidth}
    \centering
    \includegraphics[width=.75\linewidth]{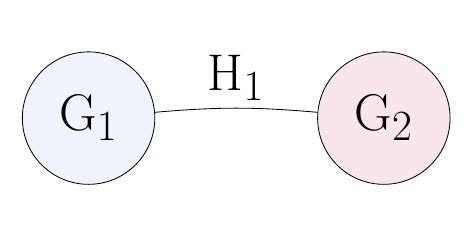}
    \captionof{figure}{Amalgamated free product.}
    \label{fig:FreeGroupGoG}
  \end{minipage}%
  \begin{minipage}{.5\textwidth}
    \centering
    \includegraphics[width=.55\linewidth]{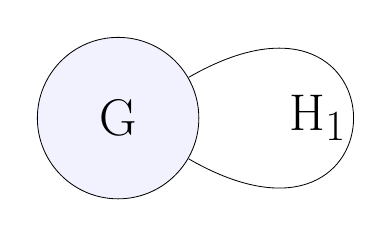}
    \captionof{figure}{HNN extension.}
    \label{fig:HNNGoG}
  \end{minipage}
\end{figure}

Let $\XX$ be a graph of groups whose underlying graph we will denote by $X$ and let $\pi = \pi_1(\XX)$ be its fundamental group. We will recall the construction of the Bass-Serre tree $\widetilde{X}$ of $\XX$ whose vertices are
\[
  \Vrt(\widetilde{X}) = \bigsqcup_{x \in \Vrt(X)} \pi \slash \sG_{x},
\]
while its edges are given by
\[
  \Edg_+(\widetilde{X})
     = \bigsqcup_{y \in \Edg_+(X) } \pi \slash \sH_{\bar y}^{\bar y} 
  \;\;\;\;\; \text{and} \;\;\;\;\;
  \Edg_-(\widetilde{X})
     =  \bigsqcup_{y \in \Edg_-(X) } \pi \slash \sH_{\bar y}^{\bar y}, 
\]
where $\Edg_+(X)$ is a fixed orientation of the edges of $X$. In the definition above we are including only the edges of a fixed orientation of $\widetilde{X}$ induced by the orientation of $X$. 
Observe that there is a canonical projection $\widetilde{X} \onto X$ that maps vertices onto vertices and edges onto edges. We can define the endpoint of each edge as follows
\begin{equation}
  \begin{array}{lll}
    \ori(g \, \sH_{\bar y}^{\bar y}) = g \sG_{\ori(y)}, 
      & \tar(g \sH_{\bar y}^{\bar y}) = g g_y \sG_{\tar(y)} 
      & \mbox{ for }  \, y \in \Edg_+(X), \,\, g \in \pi, \\
    \ori( g \sH_{\bar y}^{\bar y}) = g g_y^{-1}\sG_{\ori(y)},
      & \tar(g \sH_{\bar y}^{\bar y})= g \sG_{\tar(y)}
      & \mbox{ for } y \in \Edg_-(X), \,\, g \in \pi,
  \end{array}
  \label{eq:DefBassSerreTree}
\end{equation}
where $g_{y}=p(y)$ is the image of $y$ under the canonical projection $p$ in \eqref{eq:quotientPi}. The group $\pi$ acts on $\widetilde{X}$ by left multiplication.
%
%
%
The Bass-Serre tree construction also allows us to produce actions on trees for the fundamental group of any graph of groups. We will exploit this to obtain examples of Fourier multipliers satisfying Cotlar's identity. 

We are ready to proof the main Theorem of this section.

\begin{proof}[Proof. (of Theorem \ref{prp:InitialSegment})]
For \ref{itm:InitialSegment1}, take $h \in \sG_{x_0}$, then it is immediate that $m(g h)=m(g)$. Therefore, it is enough to prove the identity for $g, h \not\in \sG_{x_0}$. Assume that $m(gh) \neq m(g)$, since otherwise the identity holds. Let us write $g$ in its normal form $g=r_0 \, y_1 \, r_1 \, y_2 \, r_2 \cdots y_{n} \, r_n$ with $r_0 \in g_0 \cdot \sH_{\bar y_1}^{\bar y_1}$. Since $m(gh) \neq m(g)$, $h$ must admit a normal form $b = r_n^{-1} y_n^{-1} \cdots y_1^{-1}r$ with $r=r_0^{-1}\cdot r'$ for some $r' \in g_0' \cdot  \sH_{\bar y_1}^{\bar y_1}$. Here $g_0, \, g_0' \in \sG_{x_0}$ and  $g_0^{-1}g_0' \notin \sH_{\bar y_1}^{\bar y_1}$. Moreover, since $ r_n^{-1} y_n^{-1} \cdots y_1^{-1}r_0^{-1}$ is a normal form of $g^{-1}$, by the definition of $m$, it is clear that $m(g^{-1})=m(h)$.
For \ref{itm:InitialSegment2}, notice that the left action $\sG_{x_0} \acts \sG_{x_0} / \sH_{\bar{y}}^{\bar{y}}$ is transitive for each $y$, therefore a left $\sG_{x_0}$-invariant function has to be constant in each of the elements of the disjoint union $W$.
\end{proof} 

Observe that, by the definition of the Bass-Serre tree of $\XX$ in equation \eqref{eq:DefBassSerreTree}, the set of edges in $\widetilde{X}$ connected to the vertex $\widetilde{x}_0 = e \cdot \sG_{x_0} \in \Vrt(\widetilde{X})$ is given, up to orientation, by the disjoint union of $\sG_{x_0} / \sH_{\bar{y}}^{\bar{y}}$, where, again, $y$ runs over every edge in $X$ starting with $x_0$. This already established that the functions depending only on the initial segment and the functions that are constant on the connected components of $\widetilde{X} \setminus \{\widetilde x_0\}$ are in natural and bijective correspondence. The following proposition, whose proof is immediate, asserts that  $m$ falls into Model \ref{mod:model1}. This provides an alternative proof of Theorem \ref{prp:InitialSegment} via Proposition \ref{prp:modelCotlar}.\ref{itm:modelCotlar1}.

\begin{proposition}
  \label{prp:BassSerreModel1}
  Let $\pi = \pi _1 (\XX,x_0)$ be the fundamental group of a connected graph of groups $\XX$ 
  and let $\widetilde{X}$ be its Bass-Serre tree.
  \begin{enumerate}[label={\rm \textbf{(\roman*)}}]
    \item Every symbol $m: \pi \to \CC$ depending on the first segment satisfies that
    \[
      m( g ) = \widetilde{m}(g \cdot \widetilde{x}_0), 
    \]
    where $\widetilde{m}: \widetilde{X} \to \CC$ is the function constant 
    over the connected components of $\widetilde{X} \setminus \{\widetilde{x}_0\}$ 
    that is naturally associated to \eqref{eq:LiftFirstSegment} and $\widetilde{x}_0$ 
    is a vertex of $\widetilde{X}$ labeled by $\sG_{x_0}$.
    
    \item If furthermore $m$ depends only on the edge of the starting segment, then
    $\widetilde{m}:\widetilde{X} \setminus \{\widetilde{x}_0\} \to \CC$ has the same 
    value over each two connected components $\widetilde X_\alpha$ and $\widetilde X_\beta$ such that
    $r \cdot \widetilde X_\alpha = \widetilde X_\beta$, for some $r \in \sG_{x_0}$.
  \end{enumerate}
\end{proposition}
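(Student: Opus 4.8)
The plan is to translate between the combinatorics of normal forms in $\pi = \pi_1(\XX,x_0)$ and the geometry of the Bass-Serre tree $\widetilde X$, using the dictionary recalled around \eqref{eq:DefBassSerreTree}; the whole statement is then essentially a bookkeeping exercise. Write $\widetilde x_0 = e\cdot\sG_{x_0}\in\Vrt(\widetilde X)$. First I would record the identification of the link of $\widetilde x_0$ with the set $W$ of \eqref{eq:LiftFirstSegment}. By \eqref{eq:DefBassSerreTree}, an oriented edge $g\sH_{\bar y}^{\bar y}$ is incident to $\widetilde x_0$ exactly when $\ori(y)=x_0$ and $g$ may be chosen in $\sG_{x_0}$; since $\sH_{\bar y}^{\bar y}\subset\sG_{\ori(y)}=\sG_{x_0}$, normalising the representative yields a bijection between the edges at $\widetilde x_0$ and $W=\bigsqcup_y \sG_{x_0}/\sH_{\bar y}^{\bar y}$, with $y$ ranging over the edges out of $x_0$ (a loop at $x_0$ being counted twice, once for $y$ and once for $\bar y$). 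As $\widetilde X$ is a tree, deleting the vertex $\widetilde x_0$ leaves exactly one connected component hanging off each such edge, so the components of $\widetilde X\setminus\{\widetilde x_0\}$ are canonically indexed by $W$. Hence a function constant on those components is literally the same datum as a function $\widetilde m\colon W\to\CC$; I would extend it to $\widetilde X$ by setting $\widetilde m(\widetilde x_0)=0$, which is also what makes Model \ref{mod:model1} apply with $m(e)=0$.

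The core step is part (i): for $g\notin\sG_{x_0}$ one must show that $g\cdot\widetilde x_0=g\sG_{x_0}$ lies in the component of $\widetilde X\setminus\{\widetilde x_0\}$ tagged by the starting segment $r_0\cdot\sH_{\bar y_1}^{\bar y_1}\in W$ of a normal form $g=r_0\,y_1\,r_1\cdots y_n\,r_n$. Here I would invoke the standard fact of Bass-Serre theory that truncating this word after each edge $y_k$ produces a \emph{reduced} (non-backtracking) edge-path in $\widetilde X$ from $\widetilde x_0$ to $g\widetilde x_0$, whose $k$-th edge lies over $y_k$ under $\widetilde X\onto X$, and whose non-backtracking condition at step $k$ is precisely the normal-form requirement that $r_k\notin\sH_{y_k}^{y_k}$ when $y_{k+1}=\bar y_k$. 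In a tree a reduced edge-path is the geodesic, so the first edge of this path — which is the coset $r_0\sH_{\bar y_1}^{\bar y_1}$, viewed inside $\sG_{x_0}/\sH_{\bar y_1}^{\bar y_1}$ — is the first edge of the geodesic $[\widetilde x_0, g\widetilde x_0]$; therefore $g\widetilde x_0$ sits in the component indexed by $r_0\sH_{\bar y_1}^{\bar y_1}$, and $\widetilde m(g\cdot\widetilde x_0)=\widetilde m(r_0\sH_{\bar y_1}^{\bar y_1})=m(g)$ by \eqref{eq:LiftFirstSegment}. For $g\in\sG_{x_0}$ both sides are $0$. The ambiguity $r_0\mapsto r_0 a_1^{\bar y_1}$, $a_1\in\sH_{y_1}$, in passing between equivalent normal forms is exactly the ambiguity in the coset $r_0\sH_{\bar y_1}^{\bar y_1}$, so both $m$ and the first-edge-of-geodesic assignment are well defined on the same quotient — this is why \eqref{eq:LiftFirstSegment} and $W$ are phrased with cosets, and no further checking is needed there.

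Part (ii) is then an equivariance remark. The $\pi$-action on $\widetilde X$ covers the trivial action on $X$, so $\sG_{x_0}$ fixes $\widetilde x_0$ and, for each edge $y$ out of $x_0$, preserves the corresponding block $\sG_{x_0}/\sH_{\bar y}^{\bar y}$ of $W$, acting on it by left translation. Thus if $r\in\sG_{x_0}$ sends a component $\widetilde X_\alpha$ to $\widetilde X_\beta$, they are tagged by cosets within the same block $y$; if $\widetilde m$ — equivalently $m$, through \eqref{eq:LiftFirstSegment} — depends only on the edge $y$ and not on the coset, then $\widetilde m$ takes the same value on $\widetilde X_\alpha$ and $\widetilde X_\beta$, which is the second hypothesis of Model \ref{mod:model1}. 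Combined with (i) this exhibits $m$ as a Model \ref{mod:model1} symbol, and Theorem \ref{thm:TheoremB} then delivers its $L_p$-boundedness. The only genuine content — and the reason the argument is not entirely formal — is the normal-form/reduced-path dictionary of the previous paragraph, together with the orientation bookkeeping at loop edges (those $y$ with $\ori(y)=\tar(y)=x_0$, which contribute two blocks to $W$); both are classical and I would simply cite Serre's treatment rather than reprove them.
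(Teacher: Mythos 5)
Your proof is correct and is essentially the paper's own argument: the paper states this proposition without proof ("whose proof is immediate"), having just made exactly your identification of the edges at $\widetilde{x}_0$ --- hence of the components of $\widetilde{X} \setminus \{\widetilde{x}_0\}$ --- with $W$, and your normal-form/reduced-path dictionary together with the $\sG_{x_0}$-equivariance of the blocks is precisely the standard filling-in of that step. No gaps; citing Serre for the word--path correspondence and noting the well-definedness under equivalence of normal forms and the double counting of loops is exactly the right level of care.
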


In the case in which $\sG_{x_0}$ is Abelian, the more restrictive condition that $m:\pi \to \CC$ depends on the initial edge can be removed and a multiplier theorem still holds for symbols depending on the starting segment. Let $m$ be a symbol depending on the starting segment and $\widetilde{m}: W_{x_0} \to \CC$ be its lift to the space of initial segments defined in \eqref{eq:DefW}. We will denote by $\widetilde{m}_y$ the restriction of $\widetilde{m}$ to $\sG_{x_0}/\sH_{\bar{y}}^{\bar{y}}$. We have the following result.

\begin{theorem}
    \label{thm:InitialSegmentG0Abelian}
    Let $\pi=\pi_1(\XX,x_0)$ be fundamental group of a graph of groups with $\sG_{x_0}$ Abelian. In order to lighten the notation, let us denote $\sG_{x_0}/\sH_{\bar{y}}^{\bar{y}}$ by $\sG_y$. For each $y \in \Edg(X)$ with $\ori(y) = x_0$ and each character $\chi: \sG_y \to \TT$ we define the multiplier $\sigma_{y, \, \chi}$ given by $\sigma_{y, \, \chi}(g) = \chi(r_0 \, \sH_{\bar{y}}^{\bar{y}})$, whenever $g$ is represented by a word starting by $r_0 \, y$, and $0$ otherwise. The following results hold
    \begin{enumerate}[label={\rm \textbf{(\roman*)}}, label={\rm {(\roman*)}}]
        \item \label{itm:InitialSegmentG0Abelian.1} The map $\rho_{y}: L_p(\VN \pi) \to L_p(\widehat{\sG}_y;L_p(\VN \pi))$, defined by $f \mapsto (T_{\sigma_{y,\chi}}(f))_{\chi \in \widehat{\sG}_y}$ satisfies
        \[
            \| f \|_p \leq \big\| \rho_{y}(f) \big\|_{L_p(\widehat{\sG}_y;L_p(\VN \pi))} \lesssim \Big( \frac{p^2}{p-1} \Big)^\beta \| f \|_{p}, \;\; \text{ where } \beta = \log_2(1+\sqrt{2}),
        \]
        \item \label{itm:InitialSegmentG0Abelian.2} For a general symbol $m: \pi \to \CC$ depending on the starting segment, it holds that
        \[
            \big\| T_m: L_p(\VN \pi) \to L_p(\VN \pi) \big\|
            \lesssim \Big( \frac{p^2}{p-1} \Big)^\beta \,  \max_{\ori(y) = x_0 } \Big\{ \big\| T_{\tilde{m}_y}: L_p(\widehat{\sG}_y) \to L_p(\widehat{\sG}_y) \big\|_\cb \Big\}.
        \]
    \end{enumerate} 
\end{theorem}

\begin{proof}
    The proof is almost immediate. For \ref{itm:InitialSegmentG0Abelian.1} first notice that $m_{y,\chi}$ depends on the starting segment and that, given $a \in \sG_{x_0}$, $\sigma_{y,\chi}$ satisfies that $\sigma_{y,\chi}(a \, g ) = \chi(a) \, \sigma_{y,\chi}(g)$. Thus, since it satisfies \eqref{eq:CotlarNC} relative to $\sG_{x_0}$, by Remark \ref{rmk:NonInvariantCharacters} we have that it is a bounded multiplier in $L_p$ for $1<p<\infty$ satisfying bound \eqref{eq:TheoremABound} in Theorem \ref{thm:TheoremA}. The fact that $\widehat{\sG}_y$ is compact implies that $\rho_y$ is bounded with the same bound as $T_{\sigma_{y,\chi}}$. For the lower bound, notice that, since the Haar measure of $\widehat{\sG}_y$ is a probability measure, $\1_{\widehat{\sG}_y}$ is of norm one on $L_{p'}(\widehat{\sG}_y)$. But evaluating against it gives
    \[
        \big\| \rho_y(f) \big\|_{L_p(\widehat{\sG}_y;L_p(\VN \pi))} \geq \bigg\| \int_{\widehat{\sG}_y} T_{m_{y,\chi}}(f) \, d \widehat{\mu}(\chi) \bigg\|_{L_p(\VN \pi)} = \| f \|_p.
    \]
    This shows that $\rho_y$ is a quasi-isometry which gives \ref{itm:InitialSegmentG0Abelian.1}. For \ref{itm:InitialSegmentG0Abelian.2}, just notice that if $m$ is a symbol depending on the starting segment and that is $0$ outside $\sG_y$, then the following diagram commutes
    \[
        \begin{tikzcd}[column sep=2cm, row sep=0.8cm, every label/.append style = {font = \small}]
        L_p(\VN \pi) \arrow{d}{T_{m}} \arrow{r}{\rho_y} & L_p(\widehat{\sG}_y;L_p(\VN \pi)) \arrow{d}{T_{\tilde{m}_y}\otimes \Id} \\
        L_p(\VN \pi) \arrow{r}{\rho_y} & L_p(\widehat{\sG}_y;L_p(\VN \pi))
        \end{tikzcd}
    \]
    and, together with point \ref{itm:InitialSegmentG0Abelian.2} this yields the result. In the general case of $m$ depending on the starting segment, we can decompose it as a sum of symbols $\widetilde{m}{|}_{\sG_y}$ and build a direct sum map
    \[
      \rho: L_p(\VN \pi) \longrightarrow \bigoplus_{\ori(y) = x_0}{} L_p(\widehat{\sG}_y;L_p(\VN \pi))
    \]
    by taking $\rho_y$ in each of the components. 
\end{proof}


Now, we have all the tools required to illustrate our theory with the examples. 

\textbf{Examples: Amalgamated free products.}
This first example is already known, but our proof gives a concise geometric interpretation about it. Let $\{\sG_i: i \geq 1 \}$ be a family of groups and $A$ a common subgroup. Let us denote the injective homomorphisms by $\alpha_i : A \rightarrow \sG_i$, for $i \geq 1$. Now, consider the graph of groups $\XX$ in Figure  \ref{fig:AmFreeProdGoG}, which is based on a tree $X$ connecting the points $x_0$, whose group is $A$, with the point $x_i$, $i \geq 1$, whose groups are $\sG_i$. Meanwhile, the groups on the edges $y_i$ are all isomorphic to $A$ with the embedding into $\sG_{x_0} = A$ being the identity map and the embedding into $\sG_i$ being $\alpha_i$. 

\begin{center}
  \begin{figure}[ht]
    \centering
    \includegraphics[scale=1.3]{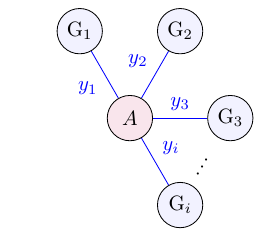}
    \caption{Graph of groups for the amalgamated free product.}
    \label{fig:AmFreeProdGoG}
  \end{figure}
\end{center}

In this case, we have that $\pi = \pi_1(\XX,x_0)$ is the free product of $\{\sG_i : i \geq 1 \}$ with $A$ as amalgam  $\pi= {\freePr_{i, \, A}} \sG_i$, see \cite[Theorem 9]{Serre1980Trees}. Functions $m: \pi \to \CC$ that depend on the starting edge in this context are given by the span of $\{L_i\}_{i \geq 1}$, where each $L_i$ is the Fourier multiplier associated to the characteristic function of the words starting by $a \, s_{i}$, where $s_{i} \in \sG_i$. These operators are an example of the \emph{free Hilbert transforms} that have been studied in the pioneering work of Mei and Ricard \cite{MeiRicard2017FreeHilb}. 
Note that free groups can be obtained from other graphs of groups. Let us pick the free group of two generators $\FF_2$ for simplicity. We can represent $\FF_2$ as the fundamental group of the graph of groups $\XX$ given by two vertices $\{x_0, x_1\}$ whose associated groups are isomorphic to $\ZZ$ and joined by a single edge with trivial group in it. 

Let us denote the generator associated to $x_0$ by $a$ and the generator associated to $x_1$ by $b$. Since $\ZZ =\langle a \rangle$ is Abelian, we can apply Theorem \ref{thm:InitialSegmentG0Abelian}. Indeed, functions depending on the starting segment are of the form $m( s_1^{n_1} \, s_2^{n_2} \, \cdots \, s_r^{n_r}) = \widetilde{m}(n_1)$ if $s_1 =a$ and $0$ otherwise. Changing the roles of $x_0$ and $x_1$ and combining the two multipliers obtained we get symbols of the form
\begin{equation}
  \label{eq:SymbolStartingLetter}
  m \big( s_1^{n_1} \, s_2^{n_2} \, \cdots \, s_r^{n_r} \big)
  \, = \,  \begin{cases}
  \widetilde{m}_1(n_1),\; \text{ if }\; s_1=a\\
  \widetilde{m}_2(n_1),\; \text{ if }\; s_1=b.
  \end{cases} 
\end{equation}
Then, Theorem \ref{thm:InitialSegmentG0Abelian} implies that
\[
    \big\| T_m: L_p(\VN \FF_2) \to L_p(\VN \FF_2) \big\| \lesssim \big(p \cdot p'\big)^\beta \, \max_{i=1,2} \Big\{ \big\| T_{\tilde{m}_1}:L_p(\TT) \to L_p(\TT) \big\|_\cb \Big\}.
\]
With this, we partially recover the multiplier theorems for initial segments of the free group in \cite{MeiRicardXu2022Free}. Our approach has the advantage of giving a geometric interpretation as follows. Consider the Bass-Serre tree $\widetilde{X}$ of this graph of groups, see Figure \ref{fig:BassSerreTreeF2}, it is easy to see that group elements in $\FF_2$ can only map pink vertices to pink vertices and blue ones to blue ones. Denote the set containing the blue vertices in the first layer and the pink vertex labeled by $\ZZ$ in the middle by $B_1$. It is straightforward  to realize that any function on $\widetilde{X}$ that is constant along the connected components of $\widetilde{X} \setminus B_1$ induces a symbol of the form \eqref{eq:SymbolStartingLetter}. 

The fact that the starting letter induces bounded multipliers suggest the enticing possibility of simultaneously generalizing our results for the starting segment in Theorem \ref{prp:InitialSegment} and \ref{thm:InitialSegmentG0Abelian} and the main result in \cite{MeiRicardXu2022Free} to multipliers depending on the starting $k$-word of the normal form in the context of graph of groups, see \cite{Runlian2023Chinese}.

\begin{center}
  \begin{figure}[ht]
    \centering
    \includegraphics[width=0.85\textwidth]{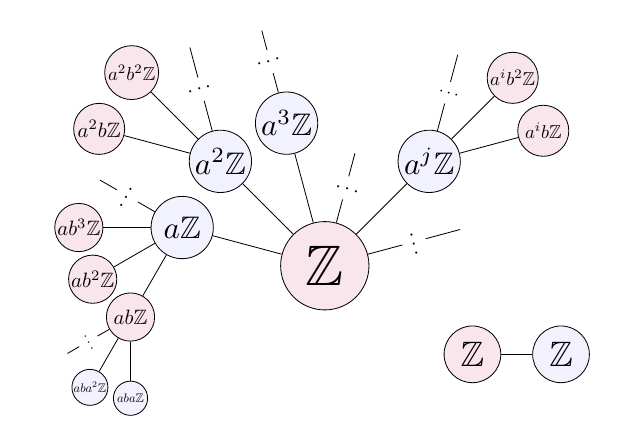}
    \caption{Graph of groups whose fundamental group is $\FF_2$ (in the corner) and its Bass-Serre tree.
    For readability, we have omitted branches going from the root towards vertices labeled 
    $a^{k} \ZZ$ with $k \leq 0$.}
    \label{fig:BassSerreTreeF2}
  \end{figure}
\end{center}




\textbf{Examples: HNN extensions.}
Suppose $\sH_1$ and $\sH_2 \subseteq \sG$ are two subgroups isomorphic under $\theta: \sH_1 \to \sH_2$ and let $\pi$ be the HNN-extension of $\sG$ relative to $\theta$, given in \eqref{eq:HNNDef}. Observe that the HNN-extension can be written as $\pi = H \rtimes T$ is the semi-direct product of the infinite cyclic group $T$ generated by $t$ and the normal subgroup $H$ generated by $t^n \sG t^{-n}$, $n \in \ZZ$. In particular, $\pi$ has a quotient is isomorphic to $\ZZ$. 

We have already seen, see Figure \ref{fig:HNNGoG}, that $\pi = \pi_1(\XX,x_0)$ is the fundamental group of a graph of groups based on a single loop. By the definition of fundamental group, every element of $\pi$ is represented by word $r_0, t^{e_1} \, r_1 \, \cdots \, t^{e_k} \,, r_k$ with $k\geq 0$, $e_i=\pm 1$ and $r_i \in \sG$. A word in this form will be reduced if it contains neither a substring of the form $t a t^{-1}$ with $ a \in \sH_1$ nor one of the form $t^{-1} b t$ with $b \in \sH_2$. Fix coset representatives of $\sG/\sH_1$ and $\sG/\sH_2$, for any $g\in \pi$, there exists a unique reduced word such that 
\begin{equation}
  \label{eq:HNNnormalFont}
  g \, = \, r_0\,  t^{e_1} \, r_1 \, \cdots \, t^{e_k} \, r_k
\end{equation}
with $r_0 \in \sG$, $r_i \in \sG/\sH_1$ if $e_i=1$, and $r_i \in \sG/\sH_2$ if $e_i=-1$. We are going to give two algebraic forms for Fourier multipliers satisfying \eqref{eq:CotlarNC}, one falling within Model \ref{mod:model1} and another within Model \ref{mod:model2} by considering the action of the HNN-extension $\pi$ on its Bass-Serre tree $\widetilde{X}$ by left multiplication.
For the first one, we choose the root  $\widetilde{x}_0$ to be the vertex labeled by $\sG$. Note that if we set the orientation $\Edg_+(X)=\{t\}$, then in the induced orientation of $\widetilde{X}$ there are $[\sG: \sH_2]$ many edges starting with $ \widetilde{x}_0$ and $[\sG: \sH_1]$ many edges ending in $\widetilde{x}_0$. 
It is immediate that a function $m: \pi \to \CC$ depends on the starting segment iff there is a function $\widetilde{m}: ( \sG/\sH_1 ) \sqcup ( \sG/\sH_2) \to \CC$ such that
\begin{equation}
  \label{eq:CotlarMultiHNN0}
  m(g) 
  \, = \,
  \begin{cases}
    \widetilde{m}_{1} \big( r_0 \cdot \sH_1 ) & \mbox{ when } \;e_1 = +1\\
    0 & \mbox{ when } g \in \sG\\
    \widetilde{m}_{2} \big( r_0 \cdot \sH_2 ) & \mbox{ when } \;e_1 = -1,
  \end{cases}
\end{equation}
where $\widetilde{m}_i$ is the restriction of $\widetilde{m}$ to $\sG/\sH_i$ for $i \in \{1,2\}$ and $g$ is equal to its normal form like in \eqref{eq:HNNnormalFont}. By Proposition \ref{prp:BassSerreModel1}, we have that $m$ is left $\sG$-invariant if and only if it depends only on the first edge in the normal form \eqref{eq:HNNnormalFont}, which can only be $t$ or $t^{-1}$ in our setting, ie: 
\begin{equation}
  \label{eq:CotlarMultiHNN1}
  m(g) \, = \, C_1 \1_{\{e_1 = +1\}} + C_2 \, \1_{\{e_1 = +1\}}
\end{equation} 
Observe that, if $\sG$ is Abelian, we can deal with the boundedness of \eqref{eq:CotlarMultiHNN0}.

Now, we can use the definition in Model \ref{mod:model2} to the action $\pi \acts \widetilde{X}$. Choose $\widetilde{x}_0$ as the root and let $\widetilde{X}_0 \subseteq \widetilde{X}$ be the connected component separated from the root by the edge $\sH_1$. It is easy to check the vertices in the connected component of $\sH_2$ always take the form $g\cdot \sG$ with the expression \eqref{eq:HNNnormalFont} of $g$ starting with $t$. Hence we get the following symbol $\varphi: \pi \to \CC$:
\begin{equation}
  \label{eq:CotlarMultiHNN2}
  \varphi(g) 
  \, = \,
  \begin{cases}
    0   & \text{ if } g \in \sH_2 \subseteq \sG \\
    D_1 & \text{ if } r_0 = e \mbox{ and } e_1 = 1\\
    D_2 & \mbox{ otherwise, }
  \end{cases} 
\end{equation} 
where $D_1$ and $D_2$ are two different constants. 

\begin{corollary}
  \label{cor:HNN2Models}
  Let $\pi$ be the HNN extension of $\sG$ with respect to $\theta$ as before.
  \begin{enumerate}[label={\rm \textbf{(\roman*)}}]
    \item Let $m: \pi \to \CC$ be like in \eqref{eq:CotlarMultiHNN1}. Since $m$ depends on the starting edge, $T_m$ is $L_p$-bounded for $1 < p < \infty$ and satisfies bound \eqref{eq:TheoremABound} by Theorem \ref{prp:InitialSegment}.
    \item If $\sG$ is Abelian, and the symbol $m$ in \eqref{eq:CotlarMultiHNN0} lift to $\widetilde{m}: (\sG/\sH_1) \sqcup (\sG/\sH_2) \to \CC$, then, by Theorem \ref{thm:InitialSegmentG0Abelian}, it holds that
    \[
        \big\| T_m: L_p(\VN \pi) \to L_p(\VN \pi) \big\|
            \lesssim \Big( \frac{p^2}{p-1} \Big)^\beta \, \max_{i=1,2} \Big\{ \big\| T_{m_{i}}: L_p(\widehat{\sG/\sH_i}) \to L_p(\widehat{\sG/\sH_i}) \big\|_\cb \Big\}.
    \]
    \item Let $\varphi: \pi \to \CC$ be like in \eqref{eq:CotlarMultiHNN2}, then $T_\varphi$ satisfies \eqref{eq:CotlarNC} relative to $\sH_2 \subseteq \sG$ and is left $\VN \sH_2$-modular. Thus $T_\varphi$ is bounded in $L_p$ with bound \eqref{eq:TheoremABound}.
  \end{enumerate}
\end{corollary}

The result above can be illustrated in the particular case of the Baumslag-Solitar group
\[
  \BS(n,m)
  \, = \,
  \big\langle r, t \, \big{|} \, t \, r^m \, t^{-1} = r^n \big\rangle,
\]
which can be seen as a HNN extension of $\ZZ = \langle r \rangle$ with respect to the map $\theta$ that sends $r^{m k} \mapsto r^{n k}$ that establishes an isomorphism between the subgroups $m\ZZ$ and $n\ZZ \subseteq \ZZ$. It holds that $\ZZ / n\ZZ \cong \ZZ_n$ and $\ZZ / m \ZZ \cong \ZZ_m$. We can take representatives $\{0, \, 1, \, \dots \, n - 1\}$ and $\{0, \, 1, \, \dots \, m - 1\}$ of the quotients. The unique normal form of $g$ is given by
\begin{equation}
  \label{eq:normalformBS} 
  g= r^{k_0} t^{e_1} r^{k_1} \cdots t^{e_m} r^{k_m}
  \quad \text{ with }  k_0 \in \mathbb{Z}, e_1, \ldots e_j \in \{\pm 1\}
\end{equation}

if $e_{j}=1$, then $k_j\in \ZZ \slash {m\ZZ}$ for $j \geq 1$, if  $e_{j}=-1$, then  $k_j \in \ZZ \slash {n\ZZ}$ for $j \geq 1$. Multipliers that depend on the starting segment $\varphi: \pi \to \CC$ lift to a function $\widetilde{\varphi}: (\ZZ/n\ZZ) \sqcup (\ZZ/m\ZZ) \to \CC$ as in \eqref{eq:CotlarMultiHNN0}. Using that $\ZZ$ is Abelian and Corollary \ref{cor:HNN2Models} gives that
\[
    \big\| T_\varphi: L_p(\VN \BS(n,m)) \to L_p(\VN \BS(n,m)) \big\|
    \lesssim \max\{ n, m \}^{\big| \frac1{2} - \frac1{p}\big|} \, \big( p \cdot p' )^\beta \| \varphi \|_\infty,
\]
where we have also use that the multiplier norm of $\varphi: \ZZ/\ell \ZZ \to \CC$ on $L_p(\ZZ/\ell\ZZ)$ is bounded by $\ell^{|\frac1{2}-\frac1{p}|} \| \varphi \|_\infty $. Now, let explore multipliers on $\BS(n,m)$ coming from Model \ref{mod:model2} as in \eqref{eq:CotlarMultiHNN2}. Let $\XX$ be the single loop graph of groups associated to $\BS(n,m)$ and $\widetilde{X}$ be its Bass-Serre tree depicted in Figure \ref{fig:BSTreeeofBaumslagSolitar}. The two directions of the loop give us the two subgroups $m\ZZ, \, n\ZZ$, and by definition of the Bass-Serre tree you get that $\widetilde{X} \setminus \{\widetilde{x}_0\}$ has $n + m$ connected components, see Figure \ref{fig:BSTreeeofBaumslagSolitar}. Now, we select the connected component of the vertex $t \sG$. The symbol obtained has a value that depends on whether the normal form \eqref{eq:normalformBS} starts with $t$ or not.
\begin{center}
  \begin{figure}[t]
    \centering 
    \includegraphics[width=0.75\textwidth]{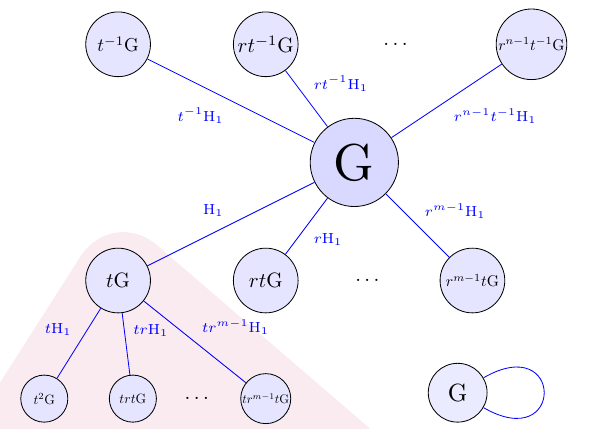}
    \caption{Graph of groups whose fundamental group is the Baumslag-Solitar group 
    (in the corner) and its Bass-Serre tree. Here $\sH_1 = n\ZZ$.}
    \label{fig:BSTreeeofBaumslagSolitar}
  \end{figure}
\end{center}
Observe that $\BS(n,m)$ has an Abelian quotient $q: \BS(n,m) \to \ZZ$ given by moding out the generator $t$. Composing $q$ with a sign would give an example of a multiplier satisfying Cotlar's identity. Nevertheless, the multipliers studied above, \eqref{eq:CotlarMultiHNN0} and \eqref{eq:CotlarMultiHNN2} do not come from the signs of Abelian quotient. Similarly, even though $\BS(n,m)$ clearly fails Serre's property (FA) there is no easy way to write the Baumslag-Solitar group $\BS(m,n)$ as an amalgamated free product in a way that will allow us to interpret the multiplier formulas in \eqref{eq:CotlarMultiHNN1} and \eqref{eq:CotlarMultiHNN2} as examples of Mei and Ricard's free Hilbert transforms as in \cite{MeiRicard2017FreeHilb}. 

\textbf{Examples with Serre's Property (FA).}  We will present here the examples of groups having multipliers satisfying Cotlar's identity and fitting into our Model \ref{mod:model1} while having Serre's property $\mathrm{(FA)}$. Recall that a group has Serre's property $\mathrm{(FA)}$ ---the $\mathrm{A}$ stemming from the French word for tree, \textit{arbre}--- if and only if every isometric action on a simplicial tree has global fixed point. This property admits a closed characterization. Indeed, a countable group $\sG$ has property $\mathrm{(FA)}$ if and only if the following conditions are satisfied: $\sG$ is not an amalgamated free product, $\sG$ has no quotient isomorphic to $\ZZ$ and $\sG$ is finitely generated. 

First, we will give an example of a left-orderable group with property $\mathrm{(FA)}$. This gives an example for which Cotlar's identity was previously unknown. On the other hand, its associated Hilbert transform can be proven to be of weak type $(1,1)$ due to the theory of Hilbert transform on finite subdiagonal algebras \cite{Randrianantoanina1998Hilbert}.

Denote by $D(2,3,7)$ the $(2,3,7)$-triangle group, which is of particular interest in hyperbolic geometry. It is the group of orientation-preserving isometries of the tiling by the $(2,3,7)$ Schwarz triangle. $D(2,3,7)$ admits a presentation 
\[
  D(2,3,7) 
  \, = \, 
  \big\langle x,y \, {\big|} \, x^2=y^3=(xy)^7=1 \big\rangle.
\]
From the above presentation, it is easy to see $D(2,3,7)$ is a quotient of the modular group $\PSL(2,\ZZ) \cong \ZZ_2 \ast \ZZ_3$. It is also isomorphic to a discrete subgroup of $\PSL(2,\RR)$. 
Now let us consider the lifting of $(2,3,7)$-triangle group to the universal cover of $\PSL(2,\RR)$ and denote it by $\Gamma$. We know from \cite{Bergman1991Orderable} that $\Gamma$ has a presentation
\[
  \Gamma \, = \, \big\langle x, y, z \, {\big|} \, x^2=y^3=z^7=xyz \big\rangle.
\]
The fact that $\Gamma$ has Serre's property (FA) and simultaneously acts on the real line by homeomorphisms is already known. We gather the different pieces in the proposition below.

\begin{proposition}
  The group $\Gamma$ has property ${\rm (FA)}$ and is left-orderable. Therefore, 
  its sign Hilbert transform $H = T_\sgn$ satisfies \eqref{eq:CotlarNC} and thus $\| H: L_p(\VN \Gamma) \to L_p(\VN \Gamma) \| \, < \, \infty$ for $1 < p < \infty$. 
\end{proposition}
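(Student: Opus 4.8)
The plan is to establish the two group‑theoretic assertions about $\Gamma$ separately and then quote Theorem~\ref{thm:TheoremC}. For left‑orderability I would use that $\Gamma$ is, by construction, the full preimage of the cocompact Fuchsian group $D(2,3,7)\subset\PSL_2(\RR)$ under the covering morphism $\widetilde{\PSL}_2(\RR)\onto\PSL_2(\RR)$, together with the fact that $\widetilde{\PSL}_2(\RR)$ acts faithfully on $\RR$ by orientation‑preserving homeomorphisms --- namely the action obtained by lifting the projective action of $\PSL_2(\RR)$ on the boundary circle $\partial\HH^2\cong S^1$ to the universal cover $\RR$. Hence $\Gamma$ embeds as a countable subgroup of $\Homeo_+(\RR)$, and by the characterisation of countable left‑orderable groups recalled before Proposition~\ref{prp:LOGactonR} (a countable group is left‑orderable if and only if it acts faithfully on $\RR$ by orientation‑preserving homeomorphisms; see also \cite{Bergman1991Orderable}) the group $\Gamma$ is left‑orderable. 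Granting this, Theorem~\ref{thm:TheoremC} applied with $\sG=\Gamma$ and $\sG_0=\{e\}$ shows that $H=T_\sgn$ satisfies \eqref{eq:CotlarNC} and that $\|H:L_p(\L\Gamma)\to L_p(\L\Gamma)\|\lesssim p^2/(p-1)<\infty$ for every $1<p<\infty$, which is the asserted conclusion.

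It remains to check Serre's property $(\mathrm{FA})$, for which I would invoke Serre's criterion \cite{Serre1980Trees}: a countable group has $(\mathrm{FA})$ if and only if it is finitely generated, has no quotient isomorphic to $\ZZ$, and is not a nontrivial amalgamated free product. Finite generation is immediate from the presentation $\Gamma=\langle x,y,z\mid x^2=y^3=z^7=xyz\rangle$. For the second condition I would compute $\Gamma^{\mathrm{ab}}$: abelianising the defining relations $2x=x+y+z$, $3y=x+y+z$, $7z=x+y+z$ presents $\Gamma^{\mathrm{ab}}$ as $\ZZ^3$ modulo the rows $(1,-1,-1)$, $(-1,2,-1)$, $(-1,-1,6)$, whose determinant is $1$; hence $\Gamma^{\mathrm{ab}}=0$, so $\Gamma$ has no nontrivial abelian quotient whatsoever, in particular no quotient isomorphic to $\ZZ$.

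The substantive step is that $\Gamma$ is not a nontrivial amalgam. Suppose $\Gamma\cong A\ast_C B$ with $C\subsetneq A$ and $C\subsetneq B$; then $\Gamma$ acts on the associated Bass--Serre tree $T$ without a global fixed point. By \cite{Bergman1991Orderable}, the element $z_0:=x^2=y^3=z^7=xyz$ generates the centre of $\Gamma$, has infinite order, and $\Gamma/\langle z_0\rangle\cong D(2,3,7)$. Now analyse the action of $z_0$ on $T$. If $z_0$ is hyperbolic, its axis $L$ is unique, hence $\Gamma$‑invariant because $z_0$ is central; the induced morphism $\Gamma\to\Aut(L)$ lands in the infinite dihedral group, and since $z_0$ acts on $L$ as a nontrivial translation its image is a nontrivial (hence metabelian with nontrivial abelianisation) subgroup of the infinite dihedral group, contradicting $\Gamma^{\mathrm{ab}}=0$. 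If $z_0$ is elliptic, its fixed‑point set is a nonempty $\Gamma$‑invariant subtree of $T$; since both $\Gamma$‑orbits of vertices of $T$ have valence at least two and $\Gamma$ has no global fixed point, a short argument shows the only such subtree is $T$ itself, so $z_0$ acts trivially on $T$. Then the $\Gamma$‑action factors through $\Gamma/\langle z_0\rangle\cong D(2,3,7)$, which therefore acts on $T$ without a global fixed point. But $D(2,3,7)=\langle\bar x,\bar y\rangle$ is generated by the finite‑order elements $\bar x,\bar y$ (of orders $2$ and $3$), both elliptic on any tree, and their product $\bar x\bar y$ has order $7$, hence is elliptic too; by the standard fact that two elliptic tree isometries with elliptic product have intersecting fixed‑point sets \cite{Serre1980Trees}, $\mathrm{Fix}(\bar x)\cap\mathrm{Fix}(\bar y)\neq\emptyset$, so $D(2,3,7)$ fixes a vertex of $T$ --- a contradiction. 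Hence no such decomposition exists, and $\Gamma$ has property $(\mathrm{FA})$.

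I expect the main obstacle to be precisely this last step. It rests on producing the infinite‑order central element $z_0$ with $\Gamma/\langle z_0\rangle\cong D(2,3,7)$ (a feature of the particular central $\ZZ$‑extension defining $\Gamma$, which I would take from \cite{Bergman1991Orderable}) and on cleanly resolving the hyperbolic versus elliptic dichotomy: the hyperbolic case is killed by the vanishing of $\Gamma^{\mathrm{ab}}$, while the elliptic case reduces, via the central quotient, to property $(\mathrm{FA})$ for the ordinary triangle group $D(2,3,7)$, itself a consequence of its being generated by torsion elements with torsion products. Should any of these points prove awkward to make fully self‑contained, the conservative route is to cite \cite{Bergman1991Orderable,CornulierKar2011,Khoi2003cut,Serre1980Trees} for both property $(\mathrm{FA})$ and the left‑orderability of $\Gamma$, and then apply Theorem~\ref{thm:TheoremC} as above.
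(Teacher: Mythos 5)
Your proposal is correct, and for the left-orderability half it coincides with the paper: both lift the boundary-circle action of $\PSL_2(\RR)$ to an action of $\widetilde{\PSL}_2(\RR)$ on $\RR$ by orientation-preserving homeomorphisms, deduce a left-invariant order on $\Gamma$, and then invoke Theorem \ref{thm:TheoremC}. Where you genuinely diverge is property $(\mathrm{FA})$. The paper takes the short route: it records the central extension $1 \to \ZZ \to \Gamma \to D(2,3,7) \to 1$, notes that $\Gamma$ is perfect (Bergman), and then quotes the Cornulier--Kar criterion that a perfect central extension has $(\mathrm{FA})$ if and only if its quotient does, together with Serre's statement that $D(2,3,7)$ has $(\mathrm{FA})$. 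You instead verify Serre's three-condition characterization by hand: the abelianization computation (the relation matrix with rows $(1,-1,-1)$, $(-1,2,-1)$, $(-1,-1,6)$ indeed has determinant $1$, so $\Gamma^{\mathrm{ab}}=0$) rules out $\ZZ$-quotients, and the amalgam case is excluded by analyzing the Bass--Serre tree action via the infinite-order central element $z_0$: the hyperbolic case is killed by perfectness (any nontrivial subgroup of the infinite dihedral group is solvable), and the elliptic case reduces to a fixed point for $D(2,3,7)$ via the standard lemma that two elliptic isometries with elliptic product share a fixed point. This is in effect a self-contained proof of the special case of the Cornulier--Kar result that the paper cites, so your version is longer but more elementary and makes the mechanism visible; the paper's version is shorter but leans on two external references. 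Two small polishing remarks: in the elliptic case the cleanest way to see that the nonempty $\Gamma$-invariant subtree $\mathrm{Fix}(z_0)$ is all of $T$ is edge-transitivity of the amalgam action (any invariant subtree containing an edge is everything, and a one-vertex invariant subtree would be a global fixed point), rather than the valence count you gesture at; and you should make explicit that the induced $D(2,3,7)$-action is still without inversions so that Serre's elliptic-product lemma applies.
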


\begin{proof}
It follows from the presentations of $D(2,3,7)$ and its covering group $\Gamma$ that the kernel  of the covering homomorphism is isomorphic to $\ZZ$. That is, we have a short exact sequence
\[
  1 \to \ZZ \to \Gamma \to D(2,3,7) \to 1.
\]
Note that $\Gamma$ is perfect \cite{Bergman1991Orderable}, i.e. it does not contain any nontrivial Abelian quotient, then  \cite[Proposition 3.2]{CornulierKar2011} tells us that $\Gamma$ has property (FA) if and only if $D(2,3,7)$ has property (FA). Since $D(2,3,7)$  has property (FA), see \cite[p. 61]{Serre1980Trees}, we deduce $\Gamma$ also has property (FA). 
Moreover, the action of $\PSL(2,\RR)$ denoted by $\widetilde{\rm PSL} (2,\RR)$ on the circle lifts to an action of $\widetilde{\PSL}(2,\RR)$  on $\RR$ by orientation-preserving homeomorphism, so in this way, the $\widetilde{\PSL} (2,\RR)$ embeds into $\Homeo_+(\RR)$, see \cite{Khoi2003cut} for more details, it naturally admits a left-invariant total order. 
Therefore, $\Gamma$ is also left-orderable. Applying Theorem \ref{thm:TheoremC}, we conclude that $\Gamma$ admits a nontrivial Hilbert transform.
\end{proof}

Recall that this gives a new example of group multiplier satisfying Cotlar's identity but not a new example of a group multiplier being $L_p$ bounded since it falls within the theory described in \cite{Randrianantoanina1998Hilbert}. In order to obtain new examples outside both the classical theory, the theory of free Hilbert transforms \cite{MeiRicard2017FreeHilb} and the theory of subdiagonal algebras we will need an example of a group with property (FA) acting without global fixed points on an $\RR$-tree other than $\RR$ itself. There are a  few examples on that direction in the literature, see \cite{Minasyan2016Rtrees}, but the complexity of the constructions makes finding explicit formulas for the multipliers more involved.


\section{Hilbert transforms over lattices of \texorpdfstring{$\SL_2(\RR)$} and \texorpdfstring{$\SL_2(\CC)$} \label{S:SL2}}

The modular group $\PSL_2(\ZZ)$ is a discrete subgroup of $\PSL_2(\RR)$. Over $\PSL_2(\RR)$ there is a very natural multiplier symbol, playing a role analogous to that of the Hilbert transform in $\RR$. This is given by equation  \eqref{eq:HilbertContSLR}. In the statement of following theorem we will abuse our notation and denote by the elements in $\PSL_2(\RR)$, which are classes of matrices up to sign $\pm \Id$ by simply matrices. We will also denote by $S$ and $T$ the matrices
\[
  S =
  \begin{pmatrix}
    0 & -1\\
    1 & 0
   \end{pmatrix}
   \quad
   \mbox{ and }
   \quad
   T = 
   \begin{pmatrix}
     1 & 1\\
     0 & 1
   \end{pmatrix}.
\]
It holds that $S^2 = -\Id$ and $(ST)^3=-\Id$, which in the quotient group gives that $S^2=\Id$ and $(ST)^3=\Id$. We have the following result.

\begin{proposition}
  \label{prp:LpBddSL2Z}
  Let $m{|}_{\PSL_2(\ZZ)}: \PSL_2(\ZZ) \to \CC$ be the restriction of \eqref{eq:HilbertContSLR} to the modular group. Then $m$ satisfies \eqref{eq:CotlarNC} with respect to the subgroup $\sG_0 = \{ \Id, S \}$ and as a consequence
  \[
    \big\| T_{m}:L_p(\VN \PSL_2(\ZZ)) \to L_p(\VN \PSL_2(\ZZ)) \big\|
    \, \lesssim \,
    \Big( \frac{p^2}{p - 1} \Big)^\beta
    \quad
    \mbox{ where }
    \beta = \log_2(1+\sqrt{2}).
  \]
\end{proposition}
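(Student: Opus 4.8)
The plan is to verify the closed factorization formula \eqref{eq:CotlarFactor} relative to $\sG_0 = \{\Id, S\}$ and then run it through the general machinery. First I would record, by direct matrix multiplication from \eqref{eq:HilbertContSLR}, three elementary facts about $m = m{|}_{\PSL_2(\ZZ)}$: that $m(e) = \sgn(0) = 0$; that $m(g^{-1}) = -m(g)$ for every $g$ (since inverting a determinant-one matrix turns $a_{11}a_{21}+a_{12}a_{22}$ into its negative), so that $(T_m)^\ast = T_{\overline{m(\cdot^{-1})}} = -T_m$; and that $m(Sg) = -m(g)$ for every $g$. The last identity says $T_m$ is left $\L\sG_0$-modular \emph{relative to the sign character} $\chi\colon\sG_0\to\{\pm1\}$, $\chi(S)=-1$, rather than genuinely left $\L\sG_0$-modular -- precisely the situation of Remark \ref{rmk:NonInvariantCharacters} and its identity \eqref{eq:LeftInvariant}. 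Granting \eqref{eq:CotlarFactor}, Theorem \ref{thm:ClosedFormulaCotlar} gives \eqref{eq:NCCotlar} with respect to $\sG_0$, and then Proposition \ref{prp:LpExtrapolation} -- invoked through Remark \ref{rmk:NonInvariantCharacters} to accommodate the character-twisted modularity, and using $\|T_m\colon L_2\to L_2\|=\|m\|_\infty=1$ together with $\|\EE T_m(\1)\|_\infty=|m(e)|=0$ -- yields the bound $\|T_m\|_{L_p}\lesssim p^\beta$ for $p\ge2$, which duality ($(T_m)^\ast=-T_m$) extends to $1<p<2$, giving $(p^2/(p-1))^\beta$. (Since moreover $m(g)\overline{m(g^{-1})}=-1$ off $\sG_0$, one could invoke Corollary \ref{cor:OptConstantGroup} / Proposition \ref{prp:ExtrapolationIdempotent} to sharpen the exponent to $1$, but only the stated bound is required.)

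\textbf{Geometric input.} To verify \eqref{eq:CotlarFactor} I would use the classical isomorphism $\PSL_2(\ZZ)\cong\ZZ_2\ast\ZZ_3 = \langle S\rangle\ast\langle TS\rangle$ and its Bass--Serre tree $\mathcal T$, realized $\PSL_2(\ZZ)$-equivariantly inside $\HH^2$, taking as root the vertex $x_0=i$ fixed by $\langle S\rangle$, so that $\st_{x_0}=\{\Id,S\}=\sG_0$. Then $m$ descends to $\widetilde m$ on the order-$2$ vertex set $\PSL_2(\ZZ)\cdot i$ via $\widetilde m(g\cdot i)=\sgn(\Re(g\cdot i))$ (well defined because $S\cdot i=i$ forces $m(gS)=m(g)$), and $m(g)=\widetilde m(g\cdot x_0)$. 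The crucial claim is that
\[
  \widetilde m \text{ is constant on each of the two connected components of } \mathcal T\setminus\{x_0\},
\]
equivalently that the separating geodesic $L=\{\Re z=0\}$ meets $\mathcal T$ only at $i$. I would prove this with the help of the reflection $\sigma\colon z\mapsto-\bar z$: conjugation by $\sigma$ merely negates the off-diagonal entries of an $\SL_2(\ZZ)$-matrix, so $\sigma$ normalizes $\PSL_2(\ZZ)$ and hence preserves the intrinsically defined tree $\mathcal T$; it fixes $L$ pointwise, fixes $i$, and interchanges the half-planes $\{\Re z>0\}$ and $\{\Re z<0\}$. Consequently $\sigma$ swaps the two components of $\mathcal T\setminus\{x_0\}$, so no point of $\mathcal T\setminus\{x_0\}$ can lie on the $\sigma$-fixed set $L$; thus $\mathcal T\cap L=\{i\}$, each component of $\mathcal T\setminus\{x_0\}$ is connected and disjoint from $L$, hence lies in a single open half-plane, which is the claim.

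\textbf{Verifying \eqref{eq:CotlarFactor}.} Fix $g\in\PSL_2(\ZZ)\setminus\sG_0$ and $h\in\PSL_2(\ZZ)$ with $m(gh)\neq m(g)$; I must show $m(g^{-1})=m(h)$. If $h\in\sG_0$ then $h\cdot i=i$, so $m(gh)=\sgn(\Re(g\cdot i))=m(g)$, a contradiction, hence $h\notin\sG_0$. If $gh\in\sG_0$: when $gh=\Id$ then $h=g^{-1}$ and we are done; when $gh=S$ then $h=g^{-1}S$, so $m(h)=\sgn(\Re(g^{-1}S\cdot i))=\sgn(\Re(g^{-1}\cdot i))=m(g^{-1})$. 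In the remaining case $g,gh,h\notin\sG_0$, so $g\cdot x_0,\;gh\cdot x_0,\;g^{-1}\cdot x_0,\;h\cdot x_0$ are order-$2$ vertices distinct from $x_0$ and $m$ takes values in $\{\pm1\}$ on $g$ and $gh$. Since $m(gh)\neq m(g)$, the vertices $g\cdot x_0$ and $gh\cdot x_0$ lie in different components of $\mathcal T\setminus\{x_0\}$, so the unique arc of $\mathcal T$ joining them passes through $x_0$; applying the isometry $g^{-1}$, the unique arc from $x_0=g^{-1}g\cdot x_0$ to $h\cdot x_0=g^{-1}gh\cdot x_0$ passes through $g^{-1}\cdot x_0$, and being an arc issuing from $x_0$ it lies, apart from $x_0$, in a single component of $\mathcal T\setminus\{x_0\}$; therefore $g^{-1}\cdot x_0$ and $h\cdot x_0$ belong to the same component and $m(g^{-1})=\widetilde m(g^{-1}\cdot x_0)=\widetilde m(h\cdot x_0)=m(h)$. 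This is the argument of Figure \ref{fig:UACaction}, with extra bookkeeping for $\sG_0\neq\{e\}$.

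\textbf{Main obstacle.} The essential point is the geometric separation claim that $L$ meets the modular tree only at $i$: Theorem \ref{thm:TheoremB} cannot be quoted directly, because $\sG_0=\{\Id,S\}$ is nontrivial and $S$ interchanges the two components of $\mathcal T\setminus\{x_0\}$, so hypothesis \ref{itm:TheoremB.2} of that theorem would force $\widetilde m$ -- hence $m$ off $\sG_0$ -- to be constant. One therefore has to run the path argument by hand and, on the operator side, replace genuine $\sG_0$-invariance by the character-twisted modularity of Remark \ref{rmk:NonInvariantCharacters}; establishing the separation via the normalizing reflection $\sigma$ is the one step that uses more than the formal free-product structure of $\PSL_2(\ZZ)$.
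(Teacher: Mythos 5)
Your argument is correct, but it reaches \eqref{eq:CotlarFactor} by a genuinely different route from the paper's proof. The paper's proof of Proposition \ref{prp:LpBddSL2Z} is purely computational: it uses the right $\PSO(2)$-invariance of $m$ to replace $h$ by an upper-triangular matrix with entries $\sqrt{y},\,x/\sqrt{y},\,0,\,1/\sqrt{y}$, writes out $m(g)=\sgn(ac+bd)$, $m(g^{-1})=\sgn(-ab-cd)$, $m(h)=\sgn(x)$, $m(gh)=\sgn(ac(x^2+y^2)+(ad+bc)x+bd)$, and, assuming $x(-ab-cd)<0$, deduces $\big(ac(x^2+y^2)+(ad+bc)x+bd\big)(ac+bd)\ge 0$ from the integrality relations $ad-bc=1$ and $abcd=bc+(bc)^2\ge 0$; the tree picture appears there only as an informal remark after the proof. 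You instead turn that remark into an actual argument, and on one point you are more careful than the paper's sketch: you correctly note that Theorem \ref{thm:TheoremB} cannot be quoted verbatim because $S$ swaps the two components at the root, so hypothesis \ref{itm:TheoremB.2} fails and $m$ is only left $\sG_0$-invariant up to the character $\chi(S)=-1$; you then rerun the path argument by hand (including the cases $h\in\sG_0$ and $gh\in\sG_0$) and, like the paper's own proof, handle the operator side through Remark \ref{rmk:NonInvariantCharacters}. Your separation claim $\mathcal{T}\cap\{\Re z=0\}=\{i\}$ via the reflection $\sigma(z)=-\bar z$ is sound; to make the step ``$\sigma$ preserves the tree'' airtight, realize $\mathcal{T}$ as the $\PSL_2(\ZZ)$-orbit of the geodesic arc from $i$ to $e^{i\pi/3}$ and observe that $\sigma$ carries this arc to its $S$-translate. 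What the paper's computation buys is independence of any embedded model; what yours buys is a conceptual explanation in the spirit of Section \ref{S:BS}.

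One auxiliary claim is false and should be removed: $m(g^{-1})=-m(g)$ does not hold on $\PSL_2(\ZZ)$. Inversion turns $a_{11}a_{21}+a_{12}a_{22}=ac+bd$ into $-(ab+cd)$, not into $-(ac+bd)$, and the two need not share a sign: for the order-three elliptic element $g=\begin{pmatrix}1&1\\-1&0\end{pmatrix}$ one has $m(g)=\sgn(-1)=-1$ and $g^{-1}=\begin{pmatrix}0&-1\\1&1\end{pmatrix}$ gives $m(g^{-1})=\sgn(-1)=-1$, so $m(g^{-1})=m(g)$ and $m(g)\overline{m(g^{-1})}=+1$. Hence $(T_m)^\ast\neq -T_m$, and the parenthetical appeal to Proposition \ref{prp:ExtrapolationIdempotent}/Corollary \ref{cor:OptConstantGroup} to lower the exponent to $1$ is not available for this symbol. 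Neither point damages the stated result: the passage from $p\ge 2$ to $1<p<2$ follows from the standard duality used in the proof of Theorem \ref{thm:TheoremA} (since $m$ is real-valued, the $L_p$-adjoint of $T_m$ with respect to the pairing $(f,g)\mapsto\tau(fg^\ast)$ is $T_{\overline m}=T_m$), and the sharpened constant was never claimed in the proposition.
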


\begin{proof}
  First notice that $m: \PSL_2(\RR) \to \CC$ is right $K$-invariant, with $K = \PSO(2)$. Similarly, when $\sG_0$ acts on the left of $m$ we have that
  \[
    m( S^{k} \, g ) 
    \, = \,
    (-1)^{k} m (g),
    \quad \mbox{ for } g \in \PSL_2(\ZZ), \, k \in \ZZ
  \]
  which means that $m$ is left $\sG_0$-invariant with respect to a character $\sG_0 \to \TT$ and Remark \ref{rmk:NonInvariantCharacters} applies.
  We need to prove that either $m(g h) = m(g)$ or $m(g^{-1}) = m(h)$ for $g \in \PSL_2(\ZZ) \setminus \sG_0$ and 
  $h \in \PSL_2(\ZZ) \setminus \{\Id\}$. Without loss of generality we can assume that $h$ lives in the larger group $\PSL_2(\RR)$ and use the right $K$-invariance of $m$ to assume that
  \[
    h
    \, = \,
    \begin{pmatrix}
      \sqrt{y} & \frac{x}{\sqrt{y}}\\
      0 & \sqrt{y}^{-1}
    \end{pmatrix}
    \quad 
    \mbox{ and }
    \quad
    g 
    \, = \,
    \begin{pmatrix}
      a & b\\
      c & d
    \end{pmatrix},
  \]
  for some $x, y \in \RR$ with $y > 0$ and $a, \, b, \, c, \, d \in \ZZ$. We are going to use the following elementary identities
  \begin{eqnarray*}
    m(g) & = & \sgn(ac+bd),\\
    m(g^{-1}) & = & \sgn(-dc -ab),\\
    m(h) & = & \sgn(x),\\
    m(gh) & = & \sgn \big(ac(x^2 + y^2 ) + (ad + bc)x + bd\big).
  \end{eqnarray*}  
  If $m(g^{-1}) = m(h)$, then the Cotlars's identity is already fulfilled. 
  Thus, we consider the case when $m(g^{-1}) \neq m(h)$, which is equal to saying that $x \, (-ab - dc) < 0$. 
  On the other hand, since $ad - bc = 1$ and $a, b, c, d \in \ZZ$, we get $abcd = bc + (bc)^2 \geq 0$. 
  We also get that $x(ab(d^2 + c^2 ) + dc(a^2 + b^2 )) \geq  0$. 
  This observation gives us the following result
  \[
    \begin{split}
      \big( a c(x^2 + & y^2) + (ad + bc)x + bd \big)(ac + bd)\\
      & = (a^2 c^2 + abcd)(x^2 + y^2 ) + \big(ab(d^2 + c^2 ) + dc(a^2 + b^2 )\big)x + abcd + b^2 d^2 \geq 0.
    \end{split}
  \]
  This translates to $m(gh) = m(g)$, which gives the result.
\end{proof}

 
The proposition above also admits a geometric proof. As we discussed in the introduction, there is a faithful and transitive isometric action of $\PSL_2(\RR)$ on the Poincar\'e plane $\HH$ through M\"obious transformations. As a lattice of $\PSL_2(\RR)$, $\PSL_2(\ZZ)$ naturally acts on $\HH$. Now consider a fundamental domain  $\sD$ of $\PSL_2(\ZZ) \acts \HH$, it is well known that such a domain $\sD$ can be taken to be a geodesic triangle with finite area but infinite diameter. Let us consider the hyperbolic tessellation:
\[
  \HH
  \, = \, 
  \bigsqcup_{g \in \PSL_2(\ZZ)} g \cdot \sD.
\]
We can build a graph $X$ such that their vertices can be either tiles of the above tessellation or each of the three sides of each tile, let us denote those by $X_{\mathrm{tiles}}$ and $X_{\mathrm{sides}}$ respectively. 
Two vertices of $x_0, x_1 \in X$ are connected by an edge in $\Edg_+(X)$ if $x_0$ can be labeled by a tile and $x_1$ can be labeled by one of its boundary segments. They will be similarly connected by an edge in $\Edg_-(X)$ if the roles of $x_0$ and $x_1$ are reversed. $X$ is a tree, see Figure \ref{fig:SL2Tessel}. 
Obviously the action of the modular group induces an action on $X$, and for every $x\in X_{\mathrm{tiles}}$, its orbit is equal to $X_{\mathrm{tiles}}$; for each $x\in  X_{\mathrm{sides}}$, its orbit is equal to $X_{\mathrm{sides}}$. It is also trivial to see that the stabilizer of each element in $X_{\mathrm{sides}}$ is conjugate to $\{\Id, S\} \cong \ZZ_2$, while the stabilizer of each element in $X_{\mathrm{tiles}}$ is conjugate to $\{\Id, \, (ST), \,(ST)^{-1}\} \cong \ZZ_3$.
By the Bass-Serre theory reviewed in Section \ref{S:BS} it is immediate that $\PSL_2(\ZZ) \cong \ZZ_2 \ast \ZZ_3$.
Furthermore, we can take the fundamental domain $\sD$ such that one of its boundary segments $x_0 \in X_{\mathrm{sides}}$ lays within the geodesic $\{z : \Re\{z\} = 0\}$. 
Then, it is clear that the function $z \mapsto \sgn ( \Re \{z\} )$ takes two values, it is constant on each connected component of $X \setminus \{x_0\}$. Now we can apply Model \ref{mod:model1} to obtain that Cotlar's identity holds. Observe that Proposition \ref{prp:InitialSegment} and the discussion before the theorem implies that $g \mapsto \Re\{g \cdot i\}$ depends only on the initial segment. Therefore \eqref{eq:HilbertContSLR} restricted to the modular group $\PSL_2(\ZZ)$ coincides with a free Hilbert transform for $\ZZ_2 \ast \ZZ_3$.

\begin{figure}[htbp]
	\centering
	\includegraphics[width=0.60\textwidth]{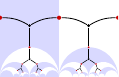}
	\caption{Tesselation associated to the modular group $\PSL_2(\ZZ)$ and its tree $X$.}
	\label{fig:SL2Tessel}
\end{figure}


\textbf{The case of $\PSL_2(\O_{-1})$.}
The purpose of this subsection is to prove Theorem \ref{thm:BoundednessBianchi}. Recall that $\PSL_2(\O_{-1}) \subseteq \PSL_2(\CC)$ is a lattice, we will denote such group by $\Gamma_1$. Recall as well that $\PSL_2(\CC)$ is semisimple and that any element admits a unique $KAN$-decomposition with $K =\PSU(2)$, $A$ the Abelian group of real diagonal matrices and $N$ the nilpotent, in fact Abelian, group of upper triangular complex matrices with ones on the diagonal. More concretely we have that
\[
  \bigg(
    \underbrace{ 
      \begin{matrix}
        a & b\\
        c & d
      \end{matrix}
    }_{g}
  \bigg)
  \, = \,
  \bigg(
    \underbrace{ 
      \begin{matrix}
        \alpha & \beta \\
        - \bar \beta  & \bar \alpha 
      \end{matrix}
    }_{\in K}
  \bigg)
  \bigg(
    \underbrace{ 
      \begin{matrix}
        s & 0 \\
        0 & s^{-1}
      \end{matrix}
    }_{\in A}
  \bigg)
  \bigg(
    \underbrace{ 
      \begin{matrix}
        1 & t\\
        0 & 1
      \end{matrix}
    }_{\in N}
  \bigg),
  \quad
  \mbox{ where } 
  s=\sqrt{|a|^2+|c|^2} 
  \mbox{ and } 
  t=\frac{\bar a b+ \bar c d}{|a|^2+|c|^2}.
\]

Let $g \in \Gamma_{1}$ be a matrix of the form
\begin{equation}
  \label{eq:matrixPSL2}
  g=
  \begin{pmatrix}
    a & b\\
    c & d
  \end{pmatrix}
  =
  \begin{pmatrix}
    a_1 + i \, a_2 & b_1 + i \, b_2\\
    c_1 + i \, c_2 & d_1 + i \, d_2
  \end{pmatrix}.
\end{equation}
The condition that the determinant equals $1$ implies that
\begin{eqnarray}
  a_1 d_1 + b_2 c_2 - ( b_1 c_1 + a_2 d_2) & = & 1, \label{eq: gamma1 1}\\
  a_1 d_2 + a_2 d_1 - b_1 c_2 - b_2 c_1 & = & 0. \label{eq: gamma1 2}
\end{eqnarray}
Using the two identities above, we can elementary prove the following key inequality.
\begin{lemma}
  \label{lem:AuxClaim0}
  Let $g \in \Gamma_1$ be as in \eqref{eq:matrixPSL2}. It holds that
  \[
    0
    \, \leq \,
    \big( a_1 d_1 + b_2 c_2 \big) \, \big( b_1 c_1 + a_2 d_2 \big)
    \, \leq \,
    \Re ( a \bar{c} ) \, \Re ( b \bar{d} )
    \, = \,
    \big( a_1 c_1 + a_2 c_2 \big) \, \big( b_1 d_1 + b_2 d_2 \big)
  \]
\end{lemma}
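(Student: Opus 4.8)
The plan is to derive both inequalities from pure algebra, feeding in only the two determinant relations \eqref{eq: gamma1 1}, \eqref{eq: gamma1 2} together with the fact that the entries $a,b,c,d$ of $g$ lie in $\O_{-1}=\ZZ[i]$, so that $a_1,a_2,\dots,d_2$ are integers.

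First I would handle the left inequality. Set $P=a_1d_1+b_2c_2$ and $Q=a_2d_2+b_1c_1$; these are integers. Relation \eqref{eq: gamma1 1} is exactly $P-Q=1$, so the middle quantity of the lemma equals $PQ=Q(Q+1)$, the product of two consecutive integers, hence $\ge 0$. I would flag that integrality is genuinely used here: over $\RR$ the relation $P-Q=1$ does not force $PQ\ge 0$.

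Next, for the right inequality, the key step is to establish the algebraic identity
\[
  (a_1c_1+a_2c_2)(b_1d_1+b_2d_2)-(a_1d_1+b_2c_2)(a_2d_2+b_1c_1)=(a_1d_2-b_1c_2)(b_2c_1-a_2d_1),
\]
which I would prove by expanding both sides and cancelling the mixed quartic monomials using the trivial rearrangements $a_1a_2d_1d_2=(a_1d_1)(a_2d_2)=(a_1d_2)(a_2d_1)$ and $b_1b_2c_1c_2=(b_1c_1)(b_2c_2)=(b_1c_2)(b_2c_1)$. Then relation \eqref{eq: gamma1 2}, which reads $a_1d_2+a_2d_1=b_1c_2+b_2c_1$, gives $a_1d_2-b_1c_2=b_2c_1-a_2d_1$, so the right-hand side of the identity is the square $(a_1d_2-b_1c_2)^2\ge 0$. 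Since $\Re(a\bar c)=a_1c_1+a_2c_2$ and $\Re(b\bar d)=b_1d_1+b_2d_2$ by definition, the identity says precisely $\Re(a\bar c)\Re(b\bar d)-PQ=(a_1d_2-b_1c_2)^2$, which simultaneously yields the right inequality and the asserted final equality.

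I do not expect a real obstacle: the whole argument is a short computation once the factorization in the displayed identity is guessed. The only point needing care is the bookkeeping — recording that the left inequality uses integrality of the entries while the right inequality uses the imaginary part \eqref{eq: gamma1 2} of $\det g=1$ (the real part \eqref{eq: gamma1 1} being needed only on the left).
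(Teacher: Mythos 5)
Your proof is correct and follows essentially the same route as the paper: the left inequality comes from \eqref{eq: gamma1 1} plus integrality (a product $Q(Q+1)$ of consecutive integers), and the right inequality comes from writing the difference $\Re(a\bar c)\Re(b\bar d)-\big(a_1d_1+b_2c_2\big)\big(b_1c_1+a_2d_2\big)$ as a perfect square $(b_2c_1-a_2d_1)^2$ via \eqref{eq: gamma1 2}. The only cosmetic difference is that you isolate the factorized algebraic identity before invoking \eqref{eq: gamma1 2}, whereas the paper substitutes \eqref{eq: gamma1 2} directly into the expanded products; the content is identical.
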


\begin{proof}
  For the first inequality, let $A = a_1 d_1 + b_2 c_2$ and $B = b_1 c_1 + a_2 d_2$. 
  By equation \eqref{eq: gamma1 1} we have $A = 1 + B$. But this implies that $A B = B^2 + B$. 
  Since $B$ is an integer $AB \geq 0$. For the second inequality, it is enough to show that 
  $0 \leq {\rm (I)} - {\rm (II)}$, where ${\rm (I)} = ( a_1 c_1 + a_2 c_2 ) \, ( b_1 d_1 + b_2 d_2 )$ and
  ${\rm (II)} = ( a_1 d_1 + b_2 c_2 ) \, ( b_1 c_1 + a_2 d_2 )$. We apply \eqref{eq: gamma1 2} to obtain
  \begin{eqnarray*}
    {\rm (I)} 
      & = & a_1 b_1 c_1 d_1 + a_1 b_2 c_1 d_2 + a_2 b_1 c_2 d_1 + a_2 b_2 c_2 d_2\\
      & = & a_1 b_1 c_1 d_1 + b_1 b_2 c_1 c_2 + 
            \big( b_2 c_1 - a_2 d_1 \big) b_2 c_1 + a_2 b_1 c_2 d_1 + a_2 b_2 c_2 d_2,\\
    {\rm (II)} 
      & = & a_1 b_1 c_1 d_1 + a_1 a_2 d_1 d_2 + b_1 b_2 c_1 c_2 + a_2 b_2 c_2 d_2\\
      & = & a_1 b_1 c_1 d_1 + a_2 b_1 c_2 d_1 + 
            \big( b_2 c_1 - a_2 d_1 \big) a_2 d_2 + b_1 b_2 c_1 c_2 + a_2 b_2 c_2 d_2.
  \end{eqnarray*}
  Thus ${\rm (I)} - {\rm (II)} = \big( b_2 c_1 - a_2 d_1 \big) b_2 c_1 - \big( b_2 c_1 - a_2 d_1 \big) a_2 d_2 = (X - Y) X - (X - Y) Y  = (X - Y)^2 \geq 0$,. where $X = b_2 c_1$ and $Y = a_2 d_1$.
\end{proof}

The next lemma follows by elementary calculations.

\begin{lemma}
  \label{claim 1}
  Let $g \in \Gamma_{1}$ as in \eqref{eq:matrixPSL2}. 
  \begin{enumerate}[leftmargin=1.25cm, label={\rm \textbf{(\roman*)}}, ref={\rm {(\roman*)}}]
    \item \label{itm:claim1.1} If we assume that $\Re \{a \bar c\} \neq 0$ and $\Re \{ b\bar d \} \neq 0$, 
    then we get $\, \sgn \big( \Re \{ a\bar c\} \big) = \sgn \big( \Re \{ b\bar d \} \big)$.
    
    \item \label{itm:claim1.2} 
    On the other hand, $\Re \,\{ a\bar  c\}  \, \Re \, \{ b\bar d\} = 0$ if and only if 
    $(a_1 d_1 + b_2 c_2)(b_1c_1+a_2d_2)=0$  and $b_2 c_1 = a_2 d_1$.
    
    \item \label{itm:claim1.3} If $\Re \{\bar a b\} \neq 0$ and $\Re\{\bar c d\} \neq 0$, then 
    $\sgn (\Re \{\bar a b\})= \sgn( \Re \{\bar c d\} ).$

    \item \label{itm:claim1.4}  $\Re \,\{ \bar a b\}  \, \Re \, \{\bar c d\} = 0$ if and only if 
    $\big( a_1 d_1 + b_2 c_2 \big) \big( b_1 c_1 + a_2 d_2 \big) = 0$ and $b_1 c_2 = a_2 d_1$.
    
    \item \label{claim 2}
    It holds that $\Im \big\{ b \bar c- a \bar d \big\}^2 - 4 \Re \{ a\bar c \}  \cdot  \Re \{ b\bar d \} \leq 0.
    $
  \end{enumerate}
\end{lemma}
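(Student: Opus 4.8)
The plan is to deduce all five items from Lemma~\ref{lem:AuxClaim0} together with one extra piece of information that is already visible inside its proof: the quantity ${\rm (I)}-{\rm (II)}$ appearing there is not merely nonnegative, it is the perfect square $(X-Y)^2$ with $X=b_2c_1$ and $Y=a_2d_1$. In other words, for every $g\in\Gamma_1$ one has the identity
\[
  \Re\{a\bar c\}\,\Re\{b\bar d\} \, = \, \big(a_1d_1+b_2c_2\big)\big(b_1c_1+a_2d_2\big) \, + \, \big(b_2c_1-a_2d_1\big)^2,
\]
and, by the left inequality of Lemma~\ref{lem:AuxClaim0}, the first summand on the right is nonnegative. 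Both summands being nonnegative, $\Re\{a\bar c\}\,\Re\{b\bar d\}\ge 0$, and this product vanishes exactly when both summands vanish. This immediately yields \ref{itm:claim1.1} (if $\Re\{a\bar c\}$ and $\Re\{b\bar d\}$ are both nonzero their product is strictly positive, hence they share the same sign) and \ref{itm:claim1.2} (the product is zero iff $b_2c_1=a_2d_1$ and $(a_1d_1+b_2c_2)(b_1c_1+a_2d_2)=0$).

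For \ref{itm:claim1.3} and \ref{itm:claim1.4} I would exploit a transpose symmetry: if $g\in\Gamma_1$ then so is $g^{\mathrm{T}}$, since transposition preserves Gaussian-integer entries and the determinant and commutes with the quotient by $\pm\mathrm{Id}$. Writing the identity above for $g^{\mathrm{T}}$ amounts to the substitution $(a,b,c,d)\mapsto(a,c,b,d)$; one checks that both $a_1d_1+b_2c_2$ and $b_1c_1+a_2d_2$ are invariant under $b\leftrightarrow c$, that $\Re\{a\bar c\}$ turns into $\Re\{a\bar b\}=\Re\{\bar a b\}$ and $\Re\{b\bar d\}$ into $\Re\{c\bar d\}=\Re\{\bar c d\}$, and that the residual perfect square becomes $(b_1c_2-a_2d_1)^2$. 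Hence
\[
  \Re\{\bar a b\}\,\Re\{\bar c d\} \, = \, \big(a_1d_1+b_2c_2\big)\big(b_1c_1+a_2d_2\big) \, + \, \big(b_1c_2-a_2d_1\big)^2,
\]
and \ref{itm:claim1.3}, \ref{itm:claim1.4} follow word for word from the arguments for \ref{itm:claim1.1}, \ref{itm:claim1.2}.

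For \ref{claim 2} I would first expand $\Im\{b\bar c-a\bar d\}=(b_2c_1-b_1c_2)-(a_2d_1-a_1d_2)$ and then invoke the off-diagonal determinant relation \eqref{eq: gamma1 2}, which rearranges to $a_1d_2-b_1c_2=b_2c_1-a_2d_1$, to collapse this to $\Im\{b\bar c-a\bar d\}=2(b_2c_1-a_2d_1)$. Squaring and subtracting $4\,\Re\{a\bar c\}\,\Re\{b\bar d\}$, expressed via the first displayed identity, gives
\[
  \Im\{b\bar c-a\bar d\}^2 - 4\,\Re\{a\bar c\}\,\Re\{b\bar d\} \, = \, -\,4\,\big(a_1d_1+b_2c_2\big)\big(b_1c_1+a_2d_2\big) \, \le \, 0,
\]
the last inequality being once more the left half of Lemma~\ref{lem:AuxClaim0}.

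None of this is deep, so I expect the main obstacle to be purely bookkeeping. The three points that need genuine care are: re-reading the proof of Lemma~\ref{lem:AuxClaim0} so as to extract the perfect square $(b_2c_1-a_2d_1)^2$ rather than just its sign; checking that $g^{\mathrm{T}}$ really lands back in $\Gamma_1$ and that it transports the relevant quantities exactly as claimed; and being sure to use \eqref{eq: gamma1 2} (not \eqref{eq: gamma1 1}) at the right moment in \ref{claim 2}. With those in place the five items are immediate.
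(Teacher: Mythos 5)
Your proof is correct and follows essentially the same route as the paper: items (i)--(ii) come from Lemma \ref{lem:AuxClaim0} together with the perfect square $(b_2c_1-a_2d_1)^2$ that its proof exhibits, (iii)--(iv) by swapping rows and columns (transposition preserves $\Gamma_1$ and both determinant relations), and (v) via \eqref{eq: gamma1 2} plus the left inequality of Lemma \ref{lem:AuxClaim0}. The only cosmetic difference is that in (v) you reuse the perfect-square identity where the paper re-expands directly to reach $-X(1+X)\leq 0$, which is the same nonnegative quantity.
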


\begin{proof}
  We will just given an sketch. Observe that, by Lemma \ref{lem:AuxClaim0}, we have that $\Re(a\bar{c}) \, \Re (b \bar{d}) \geq 0$. Therefore, if both terms are different from zero, they have the same sign. The point \ref{itm:claim1.2} follows similarly by Lemma \ref{lem:AuxClaim0} and its proof. Points \ref{itm:claim1.3} and \ref{itm:claim1.4} are a reiteration of the previous two points but changing rows by columns. Lastly, for \ref{claim 2}, we start by noticing that 
  \[
    \Im \{b\bar{c}\} - \Im \{a \bar{d}\}
    = b_2 c_1 - b_1 c_2 - a_2 d_1 + a_1 d_2
    = 2 \big( a_1 d_2 - b_2 c_1 \big),
  \]
  where we have used \eqref{eq: gamma1 2}. Now, we have that 
  \begin{eqnarray}
    & & \big( b_2 c_1 - a_2 d_1 \big)^2 - \big( a_1 c_1 + a_2 c_2 \big)\,\big( b_1 d_1 + b_2 d_2 \big) \nonumber\\
    & = & b_2^2 c_1^2 + a_2^2 d_1^2 - 2 a_2 b_2 c_1 d_1 - a_1 b_1 c_1 d_1 - a_1 b_2 c_1 d_2 
          - a_2 b_1 c_2 d_1 + a_2 b_2 c_2 d_2 \label{eq:Step2}\\
    & = & a_2 d_1 \big( a_2 d_1 - b_2 c_1 - b_1 c_2  \big) 
          - a_1 b_1 c_1 d_1 - b_1 b_2 c_1 c_2 - a_2 b_2 c_2 d_2 \label{eq:Step3} \\
    & = & - a_1 a_2 d_1 d_2 - a_1 b_1 c_1 d_1 - b_1 b_2 c_1 c_2 - a_2 b_2 c_2 d_2 \nonumber\\
    & = & - \big( a_2 d_2 + b_1 c_1 \big) \big( a_1 d_1 + b_2 c_2 \big) \label{eq:Step4}\\
    & = & - X ( 1 + X ) \,\, \leq \, \, 0, \nonumber
  \end{eqnarray}
  where $X = a_2 d_2 + b_1 c_1$. We have used identity \eqref{eq: gamma1 2} in \eqref{eq:Step2}, and \eqref{eq:Step3}. For \eqref{eq:Step4} we use \eqref{eq: gamma1 1}. The last term is negative since $X$ is an integer and $X^2 + X$ is always positive when $X \in \ZZ$.
\end{proof}


Now let us consider the symbol given in \eqref{eq:HilbertContSLC}. When it is restricted to $\Gamma_1$, is $0$ over a large subset. We are going to show that this subset is in fact a subgroup $\Gamma_0 \subseteq \Gamma_1$. The following proposition follows from the previous lemma.

\begin{lemma}
  Let $g \in \Gamma_{1}$ be a matrix with coefficients like those in \eqref{eq:matrixPSL2}
  and such that $\Re \{ a \bar c +b \bar d \} = 0$. Then $g \in \Gamma_0^+ \cup \Gamma_0^-$, where
  \begin{eqnarray*}
    \Gamma_0^+
      & = & 
      \left\{
      \begin{pmatrix}
        a_1 & b_2 i\\
        c_2 i & d_1
      \end{pmatrix}: a_1,b_2,c_2, d_1 \in \ZZ, a_1 d_1+b_2 c_2 = 1 \right\}\\
    \Gamma_0^-
      & = & 
      \left\{
      \begin{pmatrix}
        a_2 i & b_1 \\
        c_1 & d_2 i
      \end{pmatrix}: a_2, b_1, c_1, d_2 \in \ZZ, -a_2 d_2 - b_1 c_1 = 1 \right\}
  \end{eqnarray*} 
 It is clear that $\Gamma_0^+$ and $\Gamma_0 = \Gamma_0^+ \cup \Gamma_0^-$ are subgroups of 
 $\Gamma_1$. 
\end{lemma}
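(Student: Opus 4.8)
The plan is to combine the two real identities \eqref{eq: gamma1 1}--\eqref{eq: gamma1 2} coming from $\det g = 1$ with Lemma \ref{claim 1}, and then exploit the fact that the entries are Gaussian integers through one short factorisation trick.

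First I would note that the hypothesis $\Re\{a\bar c + b\bar d\} = 0$ is just $\Re\{a\bar c\} = -\Re\{b\bar d\}$. If $\Re\{a\bar c\}\neq 0$, then $\Re\{b\bar d\}\neq 0$ as well and the two numbers have opposite signs, contradicting Lemma \ref{claim 1}\ref{itm:claim1.1}; hence $\Re\{a\bar c\} = \Re\{b\bar d\} = 0$. In particular $\Re\{a\bar c\}\,\Re\{b\bar d\} = 0$, so Lemma \ref{claim 1}\ref{itm:claim1.2} yields simultaneously $b_2 c_1 = a_2 d_1$ and $(a_1 d_1 + b_2 c_2)(b_1 c_1 + a_2 d_2) = 0$. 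Writing $A = a_1 d_1 + b_2 c_2$ and $B = b_1 c_1 + a_2 d_2$, identity \eqref{eq: gamma1 1} reads $A - B = 1$ while the above reads $AB = 0$; since $A, B \in \ZZ$ this forces $(A,B) = (1,0)$ or $(A,B) = (0,-1)$.

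The core of the argument is the case $(A,B) = (1,0)$, in which I claim $g \in \Gamma_0^+$, that is $a_2 = b_1 = c_1 = d_2 = 0$ (the case $(A,B) = (0,-1)$ is entirely symmetric, interchanging the roles of real and imaginary parts, and gives $g \in \Gamma_0^-$). Multiplying $a_1 d_1 + b_2 c_2 = 1$ by $a_2$ and substituting $a_2 d_1 = b_2 c_1$ turns the left-hand side into $b_2(a_1 c_1 + a_2 c_2) = b_2 \Re\{a\bar c\} = 0$, whence $a_2 = 0$. Then $\Re\{a\bar c\}$ becomes $a_1 c_1 = 0$, so that (when $a \neq 0$, i.e. $a_1 \neq 0$) we get $c_1 = 0$. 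With $a_2 = c_1 = 0$ the imaginary part of $ad - bc = 1$ reads $a_1 d_2 - b_1 c_2 = 0$, while $\Re\{b\bar d\}$ gives $b_1 d_1 + b_2 d_2 = 0$; eliminating $b_1$ between these and using $a_1 d_1 + b_2 c_2 = 1$ forces $d_2 = 0$ and then $b_1 = 0$, so $g \in \Gamma_0^+$. The degenerate configurations in which one of $a,b,c,d$ vanishes (equivalently $a_1 = 0$, or $b=0$, or $c=0$, or $d=0$) are handled by direct inspection: the vanishing entry makes a row or a column reduce to a single Gaussian integer, hence its partner is a unit of $\ZZ[i]$, and checking the finitely many possibilities compatible with $A = 1$, $\Re\{a\bar c\} = \Re\{b\bar d\} = 0$ and $b_2 c_1 = a_2 d_1$ again lands $g$ in $\Gamma_0^+$.

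Finally, for the subgroup statement I would simply compute: a product of two matrices of ``type $+$'' (real diagonal, purely imaginary off-diagonal) is again of type $+$, a product of two of ``type $-$'' (purely imaginary diagonal, real off-diagonal) is of type $+$, and a mixed product is of type $-$; likewise each $\Gamma_0^\pm$ is stable under inversion, since $\begin{pmatrix} a_1 & i b_2 \\ i c_2 & d_1 \end{pmatrix}^{-1} = \begin{pmatrix} d_1 & -i b_2 \\ -i c_2 & a_1 \end{pmatrix}$ is again of type $+$ (and similarly for type $-$). Hence $\Gamma_0^+$ is a subgroup, $\Gamma_0 = \Gamma_0^+ \sqcup \Gamma_0^-$ is a subgroup containing it with index two, and combining with the lemma, $\Gamma_0$ is exactly the set of $g \in \Gamma_1$ with $\Re\{a\bar c + b\bar d\} = 0$. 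I expect the main obstacle to be organising the case distinction of the third paragraph cleanly — in particular getting the ``multiply \eqref{eq: gamma1 1} by $a_2$'' reduction and its mirror image to carry the whole argument while still accounting for the degenerate sub-cases — rather than any single hard computation.
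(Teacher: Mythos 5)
Your proof is correct, but it takes a genuinely different route from the paper's. Both arguments begin identically: from $\Re\{a\bar c+b\bar d\}=0$ and Lemma \ref{claim 1}.\ref{itm:claim1.1} (or directly from Lemma \ref{lem:AuxClaim0}) one gets $\Re\{a\bar c\}=\Re\{b\bar d\}=0$. After that the paper argues geometrically: assuming $a_1,a_2$ and $b_1,b_2$ coprime, perpendicularity of integer vectors forces $(c_1,c_2)=\ell(a_2,-a_1)$ and $(d_1,d_2)=m(b_2,-b_1)$, and expanding $\det g=1$ pins down either $a_2=b_1=0$ or $a_1=b_2=0$, the non-coprime case being reduced to this one. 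You instead recycle the arithmetic already established: Lemma \ref{claim 1}.\ref{itm:claim1.2} gives $AB=0$ and $b_2c_1=a_2d_1$ with $A=a_1d_1+b_2c_2$, $B=b_1c_1+a_2d_2$, while \eqref{eq: gamma1 1} gives $A-B=1$, hence $(A,B)=(1,0)$ or $(0,-1)$, and your multiplication tricks (multiplying $A=1$ by $a_2$, then $\Re\{b\bar d\}=0$ by $c_2$, and their mirrors) eliminate the four unwanted coordinates. This buys a purely algebraic proof with no coprimality reduction, at the price of a case analysis over $(A,B)$ and over vanishing entries, which you rightly flag as finite checks. Two small points to tidy in a write-up: the step ``then $b_1=0$'' needs the observation that $b_1\neq0$ would give $c_2=d_1=0$, hence $c=d=0$ (you already have $c_1=d_2=0$), contradicting $\det g=1$; and the second case is not a literal real/imaginary swap of the first (the determinant equation is not invariant under that swap), but the mirrored computation does work because $a_1d_2=b_1c_2$ follows from \eqref{eq: gamma1 2} together with $b_2c_1=a_2d_1$, so multiplying $B=-1$ by $a_1$ yields $a_1=0$ and the argument proceeds as you indicate. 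Your explicit closure and inversion computations for $\Gamma_0^{\pm}$ supply the detail the paper leaves as ``clear''.
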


\begin{proof}
  Observe that, if $\Re\{a\overline c\}\neq 0$ and $\Re\{a\overline c\}\neq 0$, by Lemma \ref{claim 1}.\ref{itm:claim1.1}, $\Re\{ a \bar{c}\}$ and $\Re\{b \bar{d}\}$ have the same sign, therefore if $\Re \{ a \bar{c} + b \bar{d} \} = 0$ that is because $\Re \{a \bar{c}\} = \Re \{b \bar{d}\} = 0$. If $\Re\{a\overline c\} \Re\{a\overline c\}=1$, we also have $\Re\{a\overline c\} \Re\{a\overline c\}=0$. Assume that $a_1$ and $a_2$ are coprimes and that $b_1$ and $b_2$ are also coprimes. Since $\Re\{ a \bar{c}\}= a_1 c_1 + a_2 c_2 = 0$, we have that $(a_1, a_2)$ and $(c_1, c_2)$ are perpendicular vectors with integer coordinates. But, since $a_1$ and $a_2$ are coprimes $(c_1, c_2) = \ell (a_2, -a_1)$ for some $\ell \in\mathbb{Z}$. Similarly $(d_1, d_2) = m \, (b_2, -b_1)$  for some $m \in\mathbb{Z}$. Computing the determinant gives
  \[
    \det
    \begin{pmatrix}
      a_1 + i a_2 & b_1 + i b_2 \\
      \ell a_2 - i \ell a_1 & m b_2 - i m b_1  
    \end{pmatrix}
    \, = \,
    \big( m - \ell \big)\big( a_2 b_1 + a_1 b_2 \big) + i \big( m - \ell \big) \big( a_2 b_2 - a_1 b_1 \big)
    \, = \, 1.
  \]
  But this implies that $m - \ell = \pm 1$ and $a_2 b_1 + a_1 b_2 = \pm 1$. Since the imaginary part has to be $0$ and
  $m - \ell = \pm 1$, it follows that $a_2 b_2 = a_1 b_1$. But since $a_1$ and $a_2$, like $b_1$ and $b_2$, are coprimes, we have that $a_2 = b_1$ and $a_1 = b_2$. Therefore $a_1^2 + a_2^2 = 1$ and $b_1^2 + b_2^2 = 1$. Since they are integers, one of $a_1$, $a_2$ and of $b_1$ and $b_2$  has to be $0$. So we have either $a_2=b_1=0$ or $a_1=b_2=0$. The case of non-coprime $a_1$ and $a_2$ can be proved similarly, by noticing that $a_1 + i a_2 = k \alpha_1 + i k \alpha_2$ with $k = \gcd(a_1, a_2)$ and $\alpha_1$ and $\alpha_2$ coprimes. In that case every perpendicular vector to $(a_1,a_2)$ with integer coordinates is of the form $(\ell \alpha_2, - \ell \alpha_1)$.
\end{proof}

Observe that $\Gamma_0^+$ is isomorphic to the subgroup $\PSL_2(\ZZ)$ in $\Gamma_1$. Indeed, if we take the natural embedding $\PSL_2(\ZZ) \subseteq \Gamma_{1}$, we have that
\[
  \Gamma_0^+ \, = \, J \, \PSL_2(\ZZ) \, J^{-1}, 
  \quad 
  \mbox{ where } 
  J = 
  \begin{pmatrix}
    e^{\frac{i \pi}{4}} & 0\\
    0 & e^{-\frac{i \pi}{4}}
  \end{pmatrix}.
\]
The group $\Gamma_0 = \Gamma_0^+ \cup \Gamma_0^-$ is generated by $\Gamma_0^+$ and the diagonal matrix with eigenvalues $i$ and $-i$. It is trivial to check that $\Gamma_0$ it is isomorphic to $\PSL_2(\ZZ) \rtimes \ZZ_2$. With the order two automorphism given by conjugation. 

Now, we introduce the following modification from the symbol in \eqref{eq:HilbertContSLC}.

\begin{definition}\normalfont
  \label{def:ModifiedBianchi}
  Let us define the symbol $m_2: \Gamma_{1} \to  \CC$ by
  \[
    m_2(g) = 
    \begin{cases}
      \sgn \big( \Re \{ a \, \bar{c} + b \, \bar{d} \} \big) 
        & \mbox{ when } g \not\in \Gamma_0\\
      1 & \mbox{ when } g \in \Gamma_0^-\\
      0 & \mbox{ when } g \in \Gamma_0^+.
    \end{cases}
  \]
  We call the group elements of $\Gamma$ the singular points of $m_2$. 
\end{definition}

\begin{proposition}
  \label{leftrightinv}
  The function $m_2$ given in Definition \ref{def:ModifiedBianchi} is both left and right $\Gamma_0^+$-invariant.
\end{proposition}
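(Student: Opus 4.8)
The plan is to separate $g \in \Gamma_1$ into three cases: $g \in \Gamma_0^+$, $g \in \Gamma_0^-$, and $g \notin \Gamma_0$, and in each one to check that $m_2(kg) = m_2(g) = m_2(gk)$ for every $k \in \Gamma_0^+$. The first two cases are purely group-theoretic and require no computation: $\Gamma_0^+$ has index two in the subgroup $\Gamma_0 = \Gamma_0^+ \cup \Gamma_0^-$, hence is normal in $\Gamma_0$, and $\Gamma_0^-$ is its unique nontrivial coset; therefore left or right multiplication by $k \in \Gamma_0^+$ preserves both $\Gamma_0^+$ and $\Gamma_0^-$, and since $m_2 \equiv 0$ on $\Gamma_0^+$ and $m_2 \equiv 1$ on $\Gamma_0^-$ by Definition~\ref{def:ModifiedBianchi}, the invariance is immediate there.

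For the remaining case $g \notin \Gamma_0$ I would argue by explicit matrix computation. Write $g = \begin{pmatrix} a & b \\ c & d \end{pmatrix} \in \Gamma_1$, and, using that $\Gamma_0^+ = J\,\PSL_2(\ZZ)\,J^{-1}$, write $k = \begin{pmatrix} p & iq \\ -ir & s \end{pmatrix}$ with $p,q,r,s \in \ZZ$ and $ps - qr = 1$. For left multiplication, expanding $\Re\{(kg)_{11}\overline{(kg)_{21}} + (kg)_{12}\overline{(kg)_{22}}\}$ as a combination of Hermitian inner products of the rows of $g$, one sees that the two terms proportional to the squared norm of a row of $g$ carry a purely imaginary coefficient and so vanish after taking real parts, leaving exactly $(ps - qr)\,\Re\{a\bar c + b\bar d\} = \Re\{a\bar c + b\bar d\}$. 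Thus the quantity whose sign defines $m_2$ off $\Gamma_0$ is literally unchanged, so $kg \notin \Gamma_0$ and $m_2(kg) = m_2(g)$.

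The right-multiplication case is where the actual content lies. The analogous expansion yields
\[
  \Re\big\{(gk)_{11}\overline{(gk)_{21}} + (gk)_{12}\overline{(gk)_{22}}\big\}
  = (p^2+q^2)\,\Re(a\bar c) + (r^2+s^2)\,\Re(b\bar d) - (pr+qs)\,\Im(a\bar d - b\bar c).
\]
Setting $P = \Re(a\bar c)$, $Q = \Re(b\bar d)$, $R = \Im(a\bar d - b\bar c)$, $u = p^2+q^2$, $w = r^2+s^2$ and $t = pr+qs$, I would feed in the arithmetic inputs $PQ \ge 0$ (Lemma~\ref{lem:AuxClaim0}) and $R^2 \le 4PQ$ (Lemma~\ref{claim 1}.\ref{claim 2}), together with $u, w \ge 1$ (since $k$ is invertible over $\ZZ$) and the Lagrange identity, which under $ps - qr = 1$ reads $uw = t^2 + 1$. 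Assume $P + Q > 0$ (the case $P + Q < 0$ is symmetric, and $P + Q = 0$ is exactly $g \in \Gamma_0$, already handled); then $P, Q \ge 0$, so AM--GM gives $uP + wQ \ge 2\sqrt{uw\,PQ}$, while $tR \le |t|\,|R| \le 2|t|\sqrt{PQ} < 2\sqrt{uw\,PQ}$ because $|t| = \sqrt{uw-1} < \sqrt{uw}$. Hence $uP + wQ - tR > 0$ when $PQ > 0$, and if $PQ = 0$ then $R = 0$ and $uP + wQ > 0$ directly. So the defining quantity of $m_2$ at $gk$ has the same sign as at $g$, whence $gk \notin \Gamma_0$ and $m_2(gk) = m_2(g)$.

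The hard part is exactly this right-multiplication step: there the relevant real expression $uP + wQ - tR$ is genuinely twisted, and proving it has the same sign as $P + Q$ forces one to use the arithmetic inequalities of $\PSL_2(\O_{-1})$ packaged in Lemmas~\ref{lem:AuxClaim0} and~\ref{claim 1} --- ultimately the condition $\det g = 1$ over $\O_{-1}$ --- in combination with the elementary identity $uw = t^2 + 1$. By contrast the left-multiplication step is a one-line algebraic cancellation, and the $\Gamma_0^{\pm}$ cases are immediate coset bookkeeping.
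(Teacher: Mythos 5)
Your proof is correct, and its computational core takes a genuinely different route from the paper's. The paper handles the nontrivial case $g\notin\Gamma_0$ by exploiting the right $K$-invariance of the symbol ($K=\PSU(2)$) to replace matrices by upper-triangular representatives, and then checks sign preservation of a quadratic expression in the triangular parameter, invoking Lemma \ref{claim 1} (equal signs of $\Re(a\bar c)$ and $\Re(b\bar d)$, plus the discriminant bound of part \ref{claim 2}) and treating the degenerate sub-cases $\Re(a\bar c)=0$ or $\Re(b\bar d)=0$ separately. You instead expand the defining quantity $F(g)=\Re\{a\bar c+b\bar d\}$ directly in the integer entries $p,q,r,s$ of $k\in\Gamma_0^+=J\,\PSL_2(\ZZ)\,J^{-1}$: on the left you get the exact identity $F(kg)=(ps-qr)F(g)=F(g)$ (I checked the expansion; the $|a|^2+|b|^2$ and $|c|^2+|d|^2$ terms indeed carry purely imaginary coefficients), and on the right the formula $F(gk)=(p^2+q^2)\Re(a\bar c)+(r^2+s^2)\Re(b\bar d)-(pr+qs)\Im(a\bar d-b\bar c)$ is also correct, and your sign argument via the Lagrange identity $(p^2+q^2)(r^2+s^2)=(pr+qs)^2+1$, AM--GM, and the same two arithmetic inputs (Lemma \ref{lem:AuxClaim0} for $\Re(a\bar c)\Re(b\bar d)\ge 0$ and Lemma \ref{claim 1}.\ref{claim 2} for the discriminant) is airtight, including the branch $\Re(a\bar c)\Re(b\bar d)=0$, where the discriminant bound forces $\Im(a\bar d-b\bar c)=0$. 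The coset bookkeeping on $\Gamma_0^{\pm}$ and the use of the paper's lemma identifying $F(g)=0$ with $g\in\Gamma_0$ are both legitimate. What each approach buys: yours avoids the Iwasawa-type reduction entirely, gives left-invariance as an exact algebraic identity rather than a computation, and compresses the paper's case analysis into a single inequality; the paper's choice of $K$-invariant coordinates $(s,t)$ is less economical here but is reused verbatim in the subsequent proof of the Cotlar identity for $m_2$, so the two computations there share formulas.
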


\begin{proof}
Let $g \in \Gamma_1$ be like in \eqref{eq:matrixPSL2}. We first prove $m_2$ is left $\Gamma_0^+$-invariant. If $h \in \Gamma_0^+$, since $\Gamma_0^+$ is a subgroup of $\Gamma_1$, it is obvious that $m_2(gh)=m_2(h)=0$. If $h\in \Gamma_0^-$, it is easy to check $gh \in \Gamma_0^-$, thus $m_2(gh)=m_2(h)=1$. If $h\notin \Gamma_0$, then $m_2$ is right $K$-invariant and we have $m_2(h)=\Re( t)$, where
\[
  h \, = \,
  \begin{pmatrix}
    s & st\\
    0 & s^{-1}
  \end{pmatrix}
  k \quad
  \mbox{ for some }
  k \in K.
\]
Moreover, we have $gh \notin \Gamma_0$ and 
\begin{equation*}
  m_2(gh) 
  =  
  m_2
  \bigg(
    \begin{pmatrix}
      a_1 & b_2 i\\
      c_2 i & d_1
    \end{pmatrix}
    \begin{pmatrix}
      s & st\\
      0 & s^{-1}
    \end{pmatrix}
  \bigg)
  \, = \,
  \big( a_1 d_1 + b_2 c_2 \big) \, \Re(t) \, = \, \Re (t) = m_2(h).
\end{equation*}
Now we show $m_2$ is right $\Gamma_0^+$-invariant. Similarly as the above, if $h \in \Gamma_0^+$, $m_2(h g) = m_2(h) = 0$; if $h \in \Gamma_0^-$, we have $hg \in \Gamma_0^-$ and  $m_2(hg)=m_2(h)=1$.  If $h \notin \Gamma_0$ and let 
\[
  h \, = \,
  \begin{pmatrix}
    a & b\\
    c & d
  \end{pmatrix}.
\]
Then $h g \notin \Gamma_0$. 
Therefore we have 
\begin{eqnarray*}
  m_2(hg)
   & = & 
     m_2 \bigg(
     \begin{pmatrix}
       a & b\\
       c & d
     \end{pmatrix}
     \begin{pmatrix}
       u & u v\\
       0 & u^{-1}
     \end{pmatrix}
      \bigg)
      \\
   & = & {\rm sgn} \Big[  {\Re \, (a\bar c)} \, u^{2} (1+ \, \Im\, v^2)+ \Im\, (b\bar c - a\bar d)  \, \Im\, v +\Re\, (b \bar d) \, u^{-2}\Big],
\end{eqnarray*}
where
\[
  u=\sqrt{d_1^2+c_2^2}\neq 0 
  \quad \mbox{ and } \quad 
  v=\frac{(b_2d_1-a_1c_2)i}{\sqrt{d_1^2+c_2^2}}
\]
Recall that $m_2(h)= \sgn \,( {\Re \, (a\bar c +b \bar d}))$. If $\Re \, (a\bar c) \neq 0$ and $ \Re \, (b\bar d ) \neq 0$, then by Lemmas \ref{claim 1} and \ref{claim 2}, it is easy to see $m_2(hg)=m_2(h)$. If $\Re \, (a\bar c)=0$ and $\Re \, (b\bar d)\neq 0$, we have $m_2(hg)=\sgn [\Re\, (b \bar d) \, u^{-2}]$ and $m_2(h)=\sgn (\Re \, (b\bar d))$. Thus $m_2(hg)=m_2(h)$. If $\Re \, (a\bar c)\neq 0$ and $\Re \, (b\bar d)= 0$, we have $m_2(hg)=\sgn [ {\Re \, (a\bar c)} \, u^{2} (1+ \, \Im\, v^2)]$ and  $m_2(h)=\sgn (\Re \, (a\bar c))$, which implies $m_2(hg)=m_2(h)$. 
\end{proof}

\begin{proposition}
  The function $m_2$ given in Definition \ref{def:ModifiedBianchi} satisfies the Cotlar
  identity \eqref{eq:CotlarNC} relative to $\Gamma_0^+$, i.e., for any $g \in \Gamma_{1} \setminus \Gamma_0^+$ 
  and $h \in \Gamma_{1}$, it holds that
  \[
    \big( m_2(g^{-1}) - m_2(h) \big) \big( m_2(gh) - m_2(g) \big) \, = \, 0.
  \]
\end{proposition}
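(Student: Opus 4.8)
The plan is to whittle the identity down, using the invariance properties of $m_2$ already established, until only one genuinely computational case remains, which is then the exact complex analogue of the estimate carried out for $\PSL_2(\ZZ)$ in Proposition~\ref{prp:LpBddSL2Z}. First observe that if $h\in\Gamma_0$, then right $\Gamma_0$-invariance of $m_2$ (Remark~\ref{rmk:right}) gives $m_2(gh)=m_2(g)$, so the second factor vanishes; and if $h\notin\Gamma_0$ but $gh\in\Gamma_0$, then again $m_2(h)=m_2\big(g^{-1}(gh)\big)=m_2(g^{-1})$ by right $\Gamma_0$-invariance, so the first factor vanishes. Hence one may assume $h\notin\Gamma_0$ and $gh\notin\Gamma_0$; since then all of $g,g^{-1},h,gh$ lie in $\Gamma_1\setminus\Gamma_0$, where $m_2$ agrees with \eqref{eq:HilbertContSLC} and its argument $\Re\{a\bar c+b\bar d\}$ is nonzero, the four values $m_2(g),m_2(g^{-1}),m_2(h),m_2(gh)$ all lie in $\{\pm1\}$.

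Since $g\in\Gamma_1\setminus\Gamma_0^+$, either $g\in\Gamma_0^-$ or $g\notin\Gamma_0$. Consider first $g\in\Gamma_0^-$. Writing $g=g_0\,j$ with $g_0\in\Gamma_0^+$ and $j=\left(\begin{smallmatrix} i&0\\0&-i\end{smallmatrix}\right)$, the identity is unchanged if $g$ is replaced by $j$: indeed $m_2(g)=m_2(j)$ and $m_2(gh)=m_2(jh)$ by left $\Gamma_0^+$-invariance, while $m_2(g^{-1})=m_2\big(j^{-1}g_0^{-1}\big)=m_2(j^{-1})$ by right $\Gamma_0^+$-invariance (Proposition~\ref{leftrightinv}). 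Now $m_2(j)=m_2(j^{-1})=1$, and since $\overline{-i}=i$ a direct computation shows that for $h\notin\Gamma_0$ with $jh\notin\Gamma_0$ one has
\[
  m_2(jh)=\sgn\big(\Re\{(ia)\overline{(-ic)}+(ib)\overline{(-id)}\}\big)
  =\sgn\big(-\Re\{a\bar c+b\bar d\}\big)=-m_2(h),
\]
so $m_2(h)$ and $m_2(jh)$ have opposite (nonzero) signs, one of them equals $1=m_2(j)=m_2(g)$, and the identity holds.

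It remains to treat $g\notin\Gamma_0$, $h\notin\Gamma_0$, $gh\notin\Gamma_0$; write $g$ as in \eqref{eq:matrixPSL2}. Using the right $K$-invariance of $m_2$ off $\Gamma_0$ (here $K=\PSU(2)$ stabilizes $x_0$), choose $k\in K$ with $hk=\left(\begin{smallmatrix} s&st\\0&s^{-1}\end{smallmatrix}\right)$, $s>0$, $t=t_1+it_2\in\CC$; then $m_2(h)=\sgn(\Re t)$ and $m_2(gh)$ equals the sign of the real part of the relevant quadratic form at $g\,(hk)=\left(\begin{smallmatrix} as & ast+bs^{-1}\\ cs & cst+ds^{-1}\end{smallmatrix}\right)$, namely, with $R_1:=\Re(a\bar c)$ and $R_2:=\Re(b\bar d)$,
\[
  m_2(gh)=\sgn\Big(s^2(1+|t|^2)\,R_1+\Re\big((a\bar d+\bar b c)\,t\big)+s^{-2}\,R_2\Big).
\]
One may assume $m_2(g^{-1})\ne m_2(h)$; since $g^{-1}=\left(\begin{smallmatrix} d&-b\\-c&a\end{smallmatrix}\right)$ gives $m_2(g^{-1})=\sgn\big(-\Re(a\bar b+c\bar d)\big)$, this reads $(\Re t)\,\Re(a\bar b+c\bar d)>0$. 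The goal is $m_2(gh)=m_2(g)=\sgn(R_1+R_2)$, i.e.\ $m_2(gh)\cdot(R_1+R_2)>0$. Multiplying the bracketed expression above by $R_1+R_2$ and expanding, the terms $s^2(1+|t|^2)R_1^2$ and $s^{-2}R_2^2$ are $\ge 0$, the mixed term $\big(s^2(1+|t|^2)+s^{-2}\big)R_1R_2$ is $\ge 0$ by Lemma~\ref{lem:AuxClaim0}, and the remaining cross term $\Re\big((a\bar d+\bar bc)t\big)(R_1+R_2)$ is controlled by splitting $t=t_1+it_2$ and invoking the determinant relations \eqref{eq: gamma1 1}--\eqref{eq: gamma1 2}, the integrality of the entries, the discriminant bound $\Im(b\bar c-a\bar d)^2\le 4R_1R_2$ of Lemma~\ref{claim 1}\ref{claim 2}, and the sign hypothesis $(\Re t)\Re(a\bar b+c\bar d)>0$ --- exactly as the linear term $(ad+bc)x$ was handled in the proof of Proposition~\ref{prp:LpBddSL2Z}. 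This produces $m_2(gh)\cdot(R_1+R_2)>0$ and finishes the proof.

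The main obstacle is this last cross term: one must reorganize the expansion of $m_2(gh)\cdot(R_1+R_2)$ into manifestly nonnegative pieces, and the fact that the form is now linear in a complex variable $t$ (rather than in a single real variable, as over $\ZZ$) forces one to separate the $t_1$- and $t_2$-contributions, absorbing the $t_2$-part into a completed square through the discriminant inequality of Lemma~\ref{claim 1}\ref{claim 2}, while the $t_1$-part is pinned down by the sign hypothesis just as in the rational case. Everything else is a formal consequence of the invariance properties of $m_2$ recorded above.
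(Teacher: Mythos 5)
Your reductions and your treatment of the case $g\in\Gamma_0^-$ are essentially fine; the factorization $g=g_0\,j$ together with $m_2(jh)=-m_2(h)$ is even a slightly cleaner route than the paper's direct matrix computation, and your setup of the main case (right $K$-invariance, the formula $m_2(gh)=\sgn\big(s^2(1+|t|^2)R_1+\Re((a\bar d+\bar b c)t)+s^{-2}R_2\big)$ with $R_1=\Re(a\bar c)$, $R_2=\Re(b\bar d)$, and the hypothesis $(\Re t)\,\Re(a\bar b+c\bar d)>0$) coincides with the paper's. One small slip in the first reduction: literal right $\Gamma_0$-invariance fails on $\Gamma_0$ itself, and when $g\in\Gamma_0^-$, $h\in\Gamma_0^-$ one has $gh\in\Gamma_0^+$, so $m_2(gh)=0\neq1=m_2(g)$; the identity holds there because the \emph{first} factor vanishes ($m_2(g^{-1})=m_2(h)=1$), not the second.

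The genuine gap is in the final positivity estimate, which is the computational heart of the proposition and which you assert by analogy rather than prove. The $t_2$-part can indeed be absorbed by completing the square against $R_1 s^2 t_2^2$ and $R_2 s^{-2}$ using the discriminant bound of Lemma \ref{claim 1}.\ref{claim 2} (note this forces you to reorganize your decomposition, since those quadratic terms cannot simultaneously be counted as separate nonnegative pieces and be used for the square; that is cosmetic). What is not cosmetic is the $t_1$-part: one must show $\Re(a\bar c+b\bar d)\,\Re(a\bar d+b\bar c)\,\Re t\ \geq 0$ under the hypothesis $(\Re t)\Re(\bar a b+\bar c d)>0$, and this is \emph{not} obtained ``exactly as the linear term $(ad+bc)x$'' in Proposition \ref{prp:LpBddSL2Z}. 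In the real case that step needed only $abcd=bc(1+bc)\geq0$; here the paper must first reduce, via the sign alignments of Lemma \ref{claim 1} (which presuppose $\Re(a\bar c)\Re(b\bar d)\neq0$), to the inequality $\Re(\bar a b)\,\Re(a\bar c)\,\Re(a\bar d+b\bar c)\geq0$, and then establish the new Gaussian-integer identity $\Re(\bar a b)\,\Re(a\bar c)=(b_1c_1+a_2d_2)(a_1^2+a_2^2)+a_2^2$, which combined with $\Re(a\bar d+b\bar c)=2(b_1c_1+a_2d_2)+1$ gives a product of the form $(AX+B)(2X+1)\geq0$ with $X\in\ZZ$ and $0\leq B\leq A$. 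Neither this identity nor any equivalent statement follows from the lemmas you cite or from the real-case trick. Moreover, since that reduction uses the nonvanishing of $\Re(a\bar c)$ and $\Re(b\bar d)$, the degenerate case $\Re(a\bar c)\,\Re(b\bar d)=0$ requires a separate argument (the paper supplies it via \eqref{eq: relation 1} and \eqref{eq: relation 2}), which your proposal omits. Until these two points are actually carried out, the main case remains unproved.
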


\begin{proof}
Since $m_2$ is right $\Gamma_0^+$-invariant by the Proposition \ref{leftrightinv}, it suffices to prove the Cotlar identity for $g$ and $h$ in $\in \Gamma_1 \setminus \Gamma_0^+$. That set decomposes as $\Gamma_1 \setminus \Gamma_0^+ = (\Gamma_1 \setminus \Gamma_0) \sqcup \Gamma_0^-$ and thus we have $4$ potential cases. When $g \in \Gamma_1 \setminus \Gamma_0$, then for every $h \in \Gamma_0$ it holds that $m_2(gh) = m_2(h)$. In the case in which $g \in \Gamma_0^-$ and $h \in \Gamma_0^-$ we have that both $g, h^{-1} \in \Gamma_0^-$ and as such $m_2(g) = m_2(h^{-1})$. Thus, we can assume that $g \in \Gamma_1 \setminus \Gamma_0^+$ and $h \in \Gamma_1 \setminus \Gamma_0$.
First, let us tackle the case of $g \in \Gamma_0^-$. 
In this case we have $m_2(g)=m_2(g^{-1})=1$. If $m_2(g^{-1}) = m_2(h)$, then the identity is satisfied. If $m(g^{-1}) \neq m_2(h)$,  this implies $m_2(h)=-1$, and then $h, g h \notin \Gamma_0$. By the right $K$-invariance of $m_2$ on $\Gamma_1 \setminus \Gamma_0$, we get 
\[
  m_2(gh)
  \, = \,
  m_2 \bigg(
  \begin{pmatrix}
    a_2i & b_1 \\
    c_1 & d_2 i
  \end{pmatrix}
  \begin{pmatrix}
    s & st\\
    0 & s^{-1}
  \end{pmatrix}
  \bigg)
= (a_2d_2 +b_1c_1) \Re \, t= -\Re \, t=-m_2(h)=1.
\]
Therefore, we get $m_2(gh)=m_2(g)$, the Cotlar identity is satisfied. 
Now let us focus on the case of $g \notin \Gamma_0$.
Let $g \notin \Gamma_0$, i.e.  $\Re \,(a\bar c +b \bar d)\neq  0$ and 
\[
  h=
  \begin{pmatrix}
    s & st\\
    0 & s^{-1}
  \end{pmatrix} 
  k
  \notin \Gamma_0
\]
with $k \in \PSU(2)$. We get the following expressions for the four terms appearing in the Cotlar identity:
\begin{eqnarray*}
  m_2(g) & = &  \sgn \,( {\Re \, (a\bar c +b \bar d})),\\
  m_2(g^{-1}) & = & -\sgn \,( {\Re \, (\bar a  b + \bar c d})), \\
  m_2(h) & = &\sgn \, (\Re \, t), \\
  m_2(gh) 
    & = & 
      m_2 \bigg(
      \begin{pmatrix}
        a & b\\
        c & d
      \end{pmatrix}
      \begin{pmatrix}
        s & st\\
        0 & s^{-1}
      \end{pmatrix}  \bigg)\\
    & = & \sgn \Big[  {\Re \, (a\bar c)} \, s^2 (1+|t|^2)+\Re \, (a\bar d +b \bar c)\,\Re \, t 
          + \Im\, (b\bar c - a\bar d)  \, \Im\, t +\Re\, (b \bar d) \, s^{-2}\Big].
\end{eqnarray*}
In the identities for $m_2(h)$ and $m_2(gh)$, we have used the property that $m_2$ is right $K$-invariant on $\Gamma_1 \setminus \Gamma_0$. Now we prove the Cotlar identity. If $m_2(g^{-1}) = m_2(h)$, then the identity is satisfied. If not, that means 
\begin{equation}
  \label{eq: inverse g not equal h}
  \Re \, t \cdot  \Re \, (\bar a  b + \bar c d ) > 0.
\end{equation}
Then we need to show $m(g) = m(gh)$, or in other words, 
\begin{eqnarray*}
  & & \Big[ {\Re \, (a\bar c)} \, s^2 (1+|t|^2) +
            \Re \, (a\bar d +b \bar c)\,\Re \, t + 
            \Im\, (b\bar c -  a\bar d) \, \Im\, t +
            \Re\, (b \bar d) \, s^{-2}\Big] \cdot  {\Re \, (a\bar c +b \bar d}) \\
  & = & {\Re \, (a\bar c +b \bar d})\, \Re \, (a\bar c) \, s^2 (1+ (\Re \, t )^2) + 
        ( {\Re \, (a\bar c +b \bar d})\, \Re \, (a\bar d +b \bar c)\,\Re \, t \\
  & & +  {{ \rm Re} \, (a\bar c +b \bar d})\cdot \,\big[\Re \, (a\bar c) \, s^2 (\Im\, t)^2 +
      \Im\, (b\bar c - a\bar d) \, \Im\, t + \Re\, (b \bar d) \, s^{-2}\big]>0.
\end{eqnarray*}
We first assume that $\Re \, (a\bar c) \neq 0$ and $ \Re \, (b\bar d) \neq 0$. By Lemma \ref{claim 1}, we have 
\[
  \Re \, (a\bar c +b \bar d)\, \Re\, (a\bar c)>0.
\] 
Moreover, Lemma \ref{claim 1}.\ref{claim 2} implies that the determinant of the quadratic function of $\Im \,t$ is negative, together with the inequality above, we see that  
\[
 {{ \rm Re} \, (a\bar c +b \bar d})\cdot \,\big[\Re \, (a\bar c) \, s^2 (\Im\, t)^2 +  \Im\, (b\bar c - a\bar d)\, \Im\, t + \Re\, b \bar d \, s^{-2}\big]\geq 0
\]
Moreover, we claim that
\[
  \Re \, (a\bar c +b \bar d) \Re \, (a\bar d +b \bar c) \, \Re \, t \geq 0.
\]
If the claim is true then we will obtain $m(g) = m(gh)$. So now it remains to prove the claim. Notice that by \eqref{eq: inverse g not equal h} and Lemma \ref{claim 1}, it is enough to show 
\[
\Re\, (\bar a b) \, \Re\, (a\bar c)\,\Re \, (a\bar d +b \bar c) \geq 0.
\]
According to \eqref{eq: gamma1 1}, $\Re \, (a\bar d +b \bar c)= a_1 d_1 +a_2 d_2+b_1 c_1 +b_2 c_2 = 2(b_1 c_1 +a_2 d_2)+1$. On the other hand, \eqref{eq: gamma1 1} and \eqref{eq: gamma1 2} imply that 
\begin{eqnarray*}
\Re\, (\bar a b) \, \Re\, (a\bar c)
  & = & (a_1 b_1+a_2 b_2)(a_1c_1+a_2c_2)\\
  & = & a_1a_2 (b_1c_2+b_2 c_1)+a_1^2b_1 c_1+a_2^2b_2c_2\\
  & = & a_1a_2 (a_1 d_2 +a_2 d_1)+a_1^2 b_1 c_1+a_2^2b_2c_2\\
  & = & a_1^2 (b_1 c_1 +a_2 d_2) +a_2^2 (a_1 d_1 + b_2 c_2)\\
  & = & (b_1 c_1+a_2 d_2)(a_1^2 +a_2^2)+a_2^2.
\end{eqnarray*}
Let $X= b_1c_1+a_2 d_2$, $A= a_1^2+a_2^2$ and $B= a_2^2$. Then $\Re\, (\bar a b) \, \Re\, (a\bar c)\,\Re \, (a\bar d +b \bar c) =(AX+B)(2X+1)$. Since $X\in \ZZ$ and $|\frac{B}{A}|\leq 1$, we get $(AX+B)(2X+1)\geq 0$, which proves the claim. 

Now we deal with the case when $\Re \,( a\bar  c)  \, \Re \, (b\bar d) = 0$. By Lemma \ref{claim 1}, it is equivalent to saying $(a_1d_1+b_2c_2)(b_1c_1+a_2d_2)=0 \text{ and } b_2c_1=a_2d_1$. Without loss of generality, we assume that  $b_1c_1+a_2d_2=0$. Then by \eqref{eq: gamma1 1}, $a_1 d_1 +b_2 c_2=1$. Since $b_2c_1=a_2d_1$,  \eqref{eq: gamma1 2} tells us that $b_1c_2 = a_1 d_2$ and $ \Im\,( b\bar c -  a\bar d )=2(b_2 c_1 -a_2 d_1)=0$. This implies that 
\begin{equation}
  \label{eq: relation 1}
  a_1 b_1 = a_1 b_1 (a_1 d_1 +b_2 c_2)= a_1^2 (b_1 d_1+ b_2 d_2)=a_1^2 \Re\, (b \bar d).
\end{equation}
Similarly, we have 
\begin{equation}
  \label{eq: relation 2}
  c_2 d_2= c_2^2 \, \Re\, (b \bar d), \;\; a_2 b_2 
  =
  b_2^2 \, \Re\, (a \bar c) \;\; \text{ and }\; c_1 d_1 = d_1^2 \, \Re\, (a \bar c).
\end{equation}
Since $\Re \,( a\bar  c+b\bar d)\neq 0$, $\Re \,( a\bar  c)$ and $ \Re \, (b\bar d) $ can not be zero at the same time.  Suppose $\Re \,( a\bar  c)=0$ and $ \Re \, (b\bar d ) \neq  0$. Then $m(g)=   \sgn \, {\Re \, (b \bar d})$, 
$m(gh) =  \sgn \big[ \Re \, (a\bar d +b \bar c)\,\Re \, t + \Re\,( b \bar d) \, s^{-2}\big]= \sgn \big[ \Re \, t + \Re\, (b \bar d) \, s^{-2}\big]$. Applying \eqref{eq: relation 1} and \eqref{eq: relation 2}, we get $\Re \, (\bar a b + \bar c d)=(a_1^2 + c_2^2) \Re\, (b \bar d)$. Recall that we have $\Re \, t \cdot  \Re \, (\bar a  b + \bar c d )>0$ by the assumption $m(g^{-1})\neq m(h)$. Therefore, we have $\Re \, t \cdot \Re\, (b \bar d) >0$, which implies $m(g)=m(gh)$. For the case $\Re \,( a\bar  c)\neq 0$ and $ \Re \, (b\bar d ) =  0$, we omit the proof since  it is similar to the previous case. 
\end{proof}

Now, we can prove Theorem \ref{thm:BoundednessBianchi} by just reducing the $L_p$-boundedness of $m$ to that of $m_2$.
\begin{proof}[Proof (of Theorem \ref{thm:BoundednessBianchi})]
  Notice that $m: \Gamma_{1} \to \CC$ satisfies that
  \[
    m(g) = m_2(g) - \1_{\Gamma_0} + \1_{\Gamma_0^+}.
  \]
  But Fourier multipliers with symbols being characteristic functions of open subgroups are bounded in $L_p$ for every $1 \leq p \leq \infty$. Therefore, up to a finite constant smaller that $2$, the operator $L_p$-norm of $T_m$ is bounded by that of $T_{m_2}$.
\end{proof}



%

\begin{small}
  \bibliographystyle{acm}
  \bibliography{bibliography}
\end{small}

\vfill







\begin{tabularx}{\textwidth}{ | X | X}
  \textbf{Adri\'an M. Gonz\'alez-P\'erez}, & \textbf{Javier Parcet} \\
  \texttt{adrian.gonzalez@uam.es}          & \texttt{parcet@icmat.es}\\
  Dept. of Mathematics, UAM,               & CSIC - ICMAT\\
  7 Francisco Tom\'as y Valiente,          & 23 Nicol\'as Cabrera,\\
  28049 Madrid, Spain.                     & 28049 Madrid, Spain\\
                                           & \\
  \texttt{agonzalez-perez@impan.pl}        & \\
  IMPAN                                    & \\
  Jana i J{\c e}drzeja {\'S}niadeckich 8,  & \\
  00-656 Warszawa, Poland                  & 
\end{tabularx}

\vspace{5pt}

\begin{tabularx}{\textwidth}{ | X  X}
  \textbf{Runlian Xia}                     & \\
  \texttt{Runlian.Xia@glasgow.ac.uk}       & \\
  University of Glasgow                    & \\
 University Avenue,                    & \\
  Glasgow G12 8QQ, UK                      &
\end{tabularx}

\end{document}